\let\cal\mathscr
\newcommand \Om {\Omega}
\newcommand \om {\omega}
\newcommand \0 {\emptyset}
\renewcommand \i {\sqrt{-1}}
\renewcommand \leq {\leqslant}
\renewcommand \geq {\geqslant}
\newcommand{\pttheta}{p^{\frac{\epsilon}{2}}}
\newcommand{\eptt}{\epsilon p^{\frac{\epsilon}{2}}}
\newcommand{\ptheta}{p^{\frac{-1+\epsilon}{2}}}
\newcommand{\ept}{\epsilon p^{\frac{-1+\epsilon}{2}}}
\DeclareMathOperator{\Vol}{Vol}
\DeclareMathOperator{\End}{End}
\DeclareMathOperator{\Supp}{Supp}
\newcommand \< {\mathcal{h}}
\renewcommand \> {\mathcal{i}}
\newcommand \cinf {\CC^\infty}
\newcommand \Id {{\rm Id}}
\renewcommand \epsilon {\varepsilon}
\newcommand \CC {{\cal C}}
\newcommand \HH {{\cal H}}
\newcommand \NN {{\mathcal N}}
\newcommand \PP {{\cal P}}
\newcommand \Dk[2] {\frac{\partial^{#1}}{\partial {{#2}^{#1}}}}
\newcommand \Dkk[3] {\frac{\partial^{#1} #2}{\partial {{#3}^{#1}}}}
\newcommand \R {\mathbb R}
\newcommand \IP {\mathbb P}
\newcommand \IE {\mathbb E}
\newcommand \IV {\mathbb V}
\newcommand \C {\mathbb C}
\newcommand \IH {\mathbb H}
\newcommand \ID {\mathbb D}
\newcommand \N {\mathbb N}
\newcommand \Z {\mathbb Z}
\newcommand \IT {\mathbb T}
\newcommand \fl {\rightarrow}
\newcommand \ignore[1] {}
\theoremstyle{plain}
\newtheorem{theorem}{Theorem}[section]
\newtheorem{lem}[theorem]{Lemma}
\newtheorem{cor}[theorem]{Corollary}
\newtheorem{prop}[theorem]{Proposition}
\theoremstyle{definition}
\newtheorem*{ackn*}{Acknowledgments}
\newtheorem{defi}[theorem]{Definition}
\newtheorem{ex}[theorem]{Example}
\newtheorem{rmk}[theorem]{Remark}
\numberwithin{equation}{section}
\crefname{equation}{}{}
\crefname{lem}{Lemma}{Lemmas}
\crefname{theorem}{Theorem}{Theorems}
\crefname{cor}{Corollary}{Corollaries}
\crefname{ex}{Example}{Examples}
\crefname{defi}{Definition}{Definitions}
\crefname{prop}{Proposition}{Propositions}
\crefname{section}{Section}{Sections}
\crefname{subsection}{Section}{Sections}
\crefname{rmk}{Remark}{Remarks}
\crefname{nota}{Notation}{Notations}
\begin{document}

\title{\bf{Partial Bergman kernels and determinantal point processes
on Kähler manifolds}}
\author{Louis IOOS}
\date{}
\maketitle
\newcommand{\Addresses}{{
  \bigskip
  \footnotesize

  \textsc{CY Cergy Paris Université, 95300 Pontoise,
France}\par\nopagebreak
  \textit{E-mail address}: \texttt{louis.ioos@cyu.fr}
  
}}

\begin{abstract}
We compute the full off-diagonal asymptotics of the equivariant and
partial Bergman kernels
associated with a circle action on a prequantized Kähler manifold with bounded geometry at infinity,
then use these results to compute the asymptotics of the
linear statistics of the associated determinantal point process as the number
of points grows to infinity,
showing that its distribution converges to a centered normal
variable with variance given by the sum of an $H^1$-norm squared in the bulk
and an $H^{\frac{1}{2}}$-norm squared on the boundary of the associated droplet.
\end{abstract}

\section{Introduction}

Given a measured space $(X,dv_X)$ and a finite orthonormal
family $\{\psi_j\in L^2(X,\C)\}_{j=1}^N$ with $N\in\N$, the
associated \emph{determinantal point process} is the measure
$d\nu_N$ on the $N$-fold product $X^N$ defined for all
$(x_1,\cdots,x_N)\in X^N$ by
\begin{equation}\label{DPPdef}
d\nu_N(x_1,\dots,x_N):=\frac{1}{N!}\,\left|\det(\psi_j(x_k))_{j,k=1}^N\right|^2\,dv_X(x_1)\cdots dv_X(x_N)\,.
\end{equation}
As explained for instance in \cite[Lem.\,4.5.1]{HKPV09},
this measure
defines in fact a probability measure over $X$.
Determinantal point processes were introduced by Macchi in \cite{Mac75} as
general models of a probability distribution on configurations
of $N$ points over $X$ exhibiting a repulsive behavior, since
the determinant in \cref{DPPdef}
vanishes as soon as any two points of the configuration coincide,
while also experiencing a confining potential,
since square-integrable functions
typically tend to $0$ at infinity when $\Vol(X,dv_X)=+\infty$.
In order to describe the distribution of typical configurations
with respect to $d\nu_N$,
it is natural to consider the associated
\emph{linear statistics} with respect to 
a test function $f\in L^\infty(X,\R)$, which are defined as the random variable
$\NN[f]:X^{N}\to\R$ given for any $(x_1,\dots,x_N)\in X^N$ by
\begin{equation}\label{linstatintro}
\NN[f](x_1,\dots,x_N):=\sum_{j=1}^N\,f(x_j)\,,
\end{equation}
and study its behavior in the \emph{thermodynamic limit} as $N\to+\infty$.
Specifically, a sequence of measures of the form \cref{DPPdef}
for each $N\in\N$
is said to
admit an \emph{equilibrium measure} $\nu$ over $X$ if
the linear statistics \cref{linstatintro} satisfy the following convergence in probability,
\begin{equation}\label{LLNintro}
\frac{1}{N}\NN[f]\xrightarrow{~N\to\infty~}~
\frac{1}{\Vol(X,d\nu)}\int_{X}\,f\,d\nu\,.
\end{equation}
This property is called a \emph{law of large numbers}.
The support $D:=\Supp\mu\subset X$ of the equilibrium measure is called
the \emph{droplet},
and the law of large numbers \cref{LLNintro}
shows in particular that typical configurations tend to
accumulate in the droplet as $N\to\infty$.
Their fluctuations, on the other hand, are usually described by a
\emph{central limit theorem} for the linear statistics \cref{linstatintro},
whose variance should then be described in terms of
the variations of $f$ over $D\subset X$.

In the context of this paper,
the fundamental example of such a determinantal point process
indexed by $N\in\N$
is the so-called \emph{Ginibre ensemble},
which corresponds to  $X=\C$ equipped with
the Lebesgue measure $dv_\C$, together with the orthonormal family
$\{\psi_j^{(N)}\in L^2(\C,\C)\}_{j=1}^N$ defined for
all $1\leq j\leq N$
and all $z\in\C$ by
\begin{equation}\label{Gindef}
\psi_j^{(N)}(z):=\sqrt{\frac{N^{j+1}}{\pi (j-1)!}}~z^{j-1}\,e^{-\frac{N}{2}|z|^2}\,.
\end{equation}
This determinantal point process has been introduced by Ginibre in
\cite[\S\,1]{Gin65}, who showed that it describes the distribution of eigenvalues of
a random matrix of size $N\times N$ with entries
following independent complex centered Gaussians with variance $\frac{1}{N}$.
As explained for instance in \cite[\S\,1.2.2]{Ser24},
it can also be interpreted as the Boltzman-Gibbs distribution
for a \emph{Coulomb gas} in the plane confined by a quadratic potential
at inverse temperature $\beta=2$.
The main result of Ginibre in \cite[\S\,1]{Gin65} then states that these
determinantal point processes admit
$d\nu:=\mathds{1}_{\mathbb{D}}\,dv_\C$
as an equilibrium measure as $N\to\infty$, where $\mathbb{D}\subset\C$ denotes the unit disk.
This is the celebrated \emph{circular law}
for the Ginibre ensemble.
Concerning the fluctuations, Rider and Virag established in \cite{RV07}
a central limit theorem for the Ginibre ensemble, showing that for
any compactly supported $f\in\CC^1_c(\C,\R)$, the centered
random variable $\NN[f]-\IE[\NN[f]]$ converges in distribution to a centered
normal random variable with variance
\begin{equation}\label{Varintro}
\lim\limits_{N\to\infty}\IV[\NN[f]]
=
\frac{1}{4\pi}\int_{\mathbb{D}}\,|df|^2\,dv_\C+
\sum_{k\in\Z}\,|k|\,
|\hat{f}_k|^2\,,
\end{equation}
where for any $k\in\Z$, we write
$\hat{f}_k\in\C$ for the $k$-th Fourier coefficient of $f\in\CC^1_c(\C,\R)$ restricted to the unit circle
$\partial\mathbb{D}\subset\C$. While the first term
is an homogeneous $H^1$-norm squared of $f$ restricted to
the droplet $\ID\subset\C$, the second term can be viewed as
an homogoneous $H^{\frac{1}{2}}$-norm squared
of $f$ restricted to its boundary $\partial\ID\subset\C$.
This result has been extended to more general potentials
over $\C$ with suitable growth at infinity
by Ameur, Hedenmalm and Makarov in \cite{AHM11} and Leblé and Serfaty
in \cite{LS18}.
As explained by Deleporte and Lambert in \cite[Th.\,1.2]{DL25}, the appearance
of an $H^{\frac{1}{2}}$-norm over the boundary of the droplet in \cref{Varintro}
can be understood as a manifestation of the universality
of the \emph{strong Szegö limit
theorem} established by Szegö in \cite{Sze52}.

In this paper, we extend these results to a much larger class of
measured spaces, where $X$ is a \emph{Kähler manifold} equipped with
its Riemannian volume form $dv_X$, in a general
set-up first introduced by Berman in \cite{Ber18}.
In this context, we consider a symplectic manifold $(X,\om)$
without boundary
together with a Hermitian line bundle $(L,h^L)$ over $X$ endowed with a
Hermitian
connection $\nabla^L$ satisfying the
following \emph{prequantization formula},
\begin{equation}\label{preq}
\om=\frac{\sqrt{-1}}{2\pi}R^L\,,
\end{equation}
where $R^L\in\Om^2(X,\C)$ is the curvature of $\nabla^L$.
We let also $X$ be equipped with an integrable
complex structure $J\in\cinf(X,\End(TX))$ compatible with $\om$,
making $(X,\om,J)$ into a \emph{Kähler manifold prequantized by}
$(L,h^L,\nabla^L)$.
We can then consider the associated \emph{Kähler metric} $g^{TX}$, which
is the Riemannian metric defined over $X$
by the formula
\begin{equation}\label{gTX}
g^{TX}:=\om(\cdot,J\cdot)\,,
\end{equation}
and let $dv_X$ be the associated Riemannian volume form.
These data naturally endow $(L,h^L)$ with a holomorphic structure for which
$\nabla^L$ is the associated \emph{Chern connection}.
For any $p\in\N$, write
$L^p:=L^{\otimes p}$ for the $p^{\text{th}}$-tensor power of $L$
equipped with the induced Hermitian metric $h^{L^p}$,
and consider the space $H^0_{(2)}(X,L^p)$
of square-integrable holomorphic sections of $L^p$
for the $L^2$-Hermitian product induced by $h^{L^p}$ and $dv_X$.
Given a finite orthonormal family $\{s_j\in H^0_{(2)}(X,L^p)\}_{j=1}^{N_p}$
with $N_p\in\N$,
we consider the measure
$d\nu_{N_p}$ over the $N_p$-fold product $X^{N_p}$ defined for all
$(x_1,\cdots,x_{N_p})\in X^{N_p}$ by
\begin{equation}\label{DPPholdef}
d\nu_{N_p}(x_1,\dots,x_{N_p}):=\frac{1}{N_p!}\,\left|\det(s_j(x_k))_{j,k=1}^{N_p}\right|^2_p\,dv_X(x_1)\cdots dv_X(x_{N_p})\,,
\end{equation}
where $|\cdot|_p$ denotes the Hermitian norm induced by $h^{L^p}$ on
$\bigotimes_{1\leq j\leq N_p}L^p_{x_j}$.

In case $X$ is compact,
the space of holomorphic sections
$H^0_{(2)}(X,L^p)=:H^0(X,L^p)$ is finite-dimensional for all $p\in\N$,
and the determinantal point processes \cref{DPPholdef} associated
with orthonormal bases of $H^0(X,L^p)$
have first been studied by Berman in \cite{Ber18}.
When $X$ is not necessarily compact,
a natural construction of finite orthonormal families in
$H^0_{(2)}(X,L^p)$ consists in considering
a compatible action of the circle $S^1$ on $(L,h^L,\nabla^L)$ over $(X,\om,J)$,
inducing a unitary representation of $S^1$
on $H^0_{(2)}(X,L^p)$ for each $p\in\N$. One can then consider the \emph{weight decomposition}
\begin{equation}\label{decwghtspace}
H^0_{(2)}(X,L^p)=\widehat{\bigoplus\limits_{m\in\Z}}\,H^0_{(2)}(X,L^p)_m\,,
\end{equation}
given by the $L^2$-orthogonal Hilbert direct sum
of the \emph{weight spaces}
$H^0_{(2)}(X,L^p)_m$ defined for all $m\in\Z$ by 
\begin{equation}\label{wghtspace}
H^0_{(2)}(X,L^p)_m:=\{s\in H^0_{(2)}(X,L^p)~|~
\varphi_{t}^*s=e^{2\pi\sqrt{-1} tm}s\,,\text{ for all }t\in\R\}\,,
\end{equation}
where $\varphi_{t}^*$ denotes the pullback
by the action of $S^1\simeq\R/\Z$ on $L$ over $X$
for all $t\in\R$.
As we explain in \cref{actionsec},
a compatible action of $S^1$ on $(L,h^L,\nabla^L)$ over $(X,\om,J)$
actually determines a \emph{moment map} $\mu\in\cinf(X,\R)$
for the Hamiltonian action of $S^1$ on the symplectic manifold $(X,\om)$,
called \emph{Kostant moment map}.
Under the assumption described in \cref{volgrowth}
that $\mu\in\cinf(X,\R)$ has \emph{polynomial
growth},
so that in particular it is proper and bounded from below,
we show in \cref{QR=0} that
for any $p\in\N$ large enough, the
subspace
\begin{equation}\label{Hpdef}
\HH_p:=\widehat{\bigoplus\limits_{m\leq 0}}\,H^0_{(2)}(X,L^p)_m
\subset H^0_{(2)}(X,L^p)\,,
\end{equation}
given by the Hilbert direct sum of weight spaces \cref{wghtspace}
with negative weights,
has finite dimension $N_p\in\N$, so that one can consider the
determinantal point process \cref{DPPholdef} associated with
an orthonormal basis $\{s_j\in \HH_p\}_{j=1}^{N_p}$.
As we explain in \cref{Ginibreex},
in the special case of $X=\C$ endowed with the action of $S^1\subset\C$
by multiplication, we recover in this way the Ginibre ensemble
\cref{Gindef}.

Assume also
that $S^1$ acts freely on the compact submanifold $\mu^{-1}(0)\subset X$,
and consider the associated \emph{symplectic reduction} $X_0:=\mu^{-1}(0)/S^1$,
with induced Riemannian metric $g^{TX_0}$ and Riemanian volume form $dv_{X_0}$.
For any $f\in\cinf(X,\C)$ and any $k\in\N$,
consider the function $|\hat{f}_k|^2:X_0\to\R$ induced by
its \emph{$k$-th Fourier coefficient}, defined for any $x\in\mu^{-1}(0)$ by
\begin{equation}\label{Fouriercoeff}
\hat{f}_k(x):=\int_0^1\,e^{2\pi \i t k}\,f(\varphi_t(x))\,dt\,.
\end{equation}
Under the additional assumption described in \cref{setting}
that the Kähler manifold $(X,\om,J)$ prequantized by $(L,h^l,\nabla^L)$
has \emph{bounded geometry at infinity},
the main result of this paper is the following.

\begin{theorem}\label{mainth}
Let $(X,\om,J)$ be a Kähler manifold prequantized by
$(L,h^L,\nabla^L)$
with bounded geometry at
infinity endowed with a compatible $S^1$-action
such that its Kostant moment
map $\mu\in\cinf(X,\R)$ has polynomial growth,
and assume that $S^1$ acts freely on $\mu^{-1}(0)\subset X$.
Then for any $f\in L^\infty(X,\R)$,
the linear statistics
$\NN_p[f]:X^{N_p}\to\R$ defined as in \cref{linstatintro}
satisfy the following convergence
in probability as $p\to\infty$,
\begin{equation}\label{LLNmainth}
\frac{1}{N_p}\NN_p[f]\xrightarrow{~p\to\infty~}~
\frac{1}{\Vol\left(\{\mu<0\}\right)}\int_{\{\mu<0\}}\,f\,dv_X\,.
\end{equation}
Furthermore, for any smooth $f\in\cinf_c(X,\R)$ with
compact support,
the variance of the associated
linear statistics satisfies
\begin{equation}\label{Varmainth}
\lim\limits_{p\to\infty}\,p^{-n+1}
\IV\left[\NN_p[f]\right]=
\frac{1}{4\pi}\int_{\{\mu<0\}}\,|df|^2\,dv_X(x)+
\frac{1}{2}\int_{X_0}\,\sum_{k\in\Z}\,|k|\,
|\hat{f}_k(x)|^2\,dv_{X_0}(x)\,,
\end{equation}
where $n:=\frac{\dim X}{2}$, and the random variable $N_p^{\alpha}(\NN_p[f]-\IE[\NN_p[f]])$
with $\alpha=\frac{1}{2n}-\frac{1}{2}$
converges in distribution to a centered
normal random variable with variance \cref{Varmainth} as $p\to\infty$.
\end{theorem}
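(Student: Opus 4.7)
The plan is to combine the standard moment formulas for determinantal point processes with the full off-diagonal asymptotics of the partial Bergman kernel of $\HH_p$, which I will denote $B_p(x,y)$ and assume to be established earlier in the paper. Recall that for a DPP of the form \cref{DPPholdef}, the one-point density equals $B_p(x,x)$, and the variance of the linear statistic $\NN_p[f]$ admits the exact identity
\begin{equation*}
\IV[\NN_p[f]]=\frac{1}{2}\int_X\int_X\,(f(x)-f(y))^2\,|B_p(x,y)|^2_p\,dv_X(x)\,dv_X(y)\,.
\end{equation*}
The entire theorem will follow from inserting the asymptotics of $B_p$ into these formulas and performing stationary-phase/saddle-point analysis.

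For the law of large numbers \cref{LLNmainth}, I would use $\IE[\NN_p[f]]=\int_X f(x)\,B_p(x,x)\,dv_X$ together with the diagonal asymptotics $p^{-n}B_p(x,x)\to\mathds{1}_{\{\mu<0\}}(x)$, integrating the point density formula for $N_p$ as well. Concentration in probability then follows from combining this with a Chebyshev bound, once one shows $\IV[\NN_p[f]/N_p]\to 0$; since $N_p\sim p^n\Vol(\{\mu<0\})$ while $\IV[\NN_p[f]]=O(p^{n-1})$ even for bounded $f$ (by a crude off-diagonal bound on $B_p$), this is automatic.

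For the variance asymptotics \cref{Varmainth}, I would split the double integral according to whether $x,y$ lie in the bulk $\{\mu<-\delta\}$, in a shrinking neighbourhood of $\mu^{-1}(0)$, or in the forbidden region $\{\mu>\delta\}$. The forbidden region contributes negligibly by the exponential decay of the Bergman kernel outside the droplet. In the bulk, $|B_p(x,y)|^2$ is a smoothed approximation of the identity of scale $p^{-1/2}$ concentrating near the diagonal; a Taylor expansion $f(x)-f(y)\simeq df(x)\cdot(y-x)$ followed by a Gaussian integration yields the $H^1$-norm term, where the $\frac{1}{4\pi}$ prefactor arises from the explicit Bargmann–Fock Gaussian model kernel. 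The difficult term is the boundary one: near $\mu^{-1}(0)$, the kernel $B_p(x,y)$ is no longer diagonal-concentrated in the direction normal to $\mu^{-1}(0)$ but transitions smoothly from $p^n$ to $0$ across the boundary. Using the $S^1$-equivariant off-diagonal asymptotics, I would expand $f$ along the $S^1$-orbits via its Fourier coefficients \cref{Fouriercoeff} and reduce the boundary integral, after a tangential/normal Fermi-coordinate rescaling, to a universal one-dimensional model identical to the Szegő-type computation of \cite{RV07}, which produces $\sum_k|k||\hat{f}_k|^2$ integrated against $dv_{X_0}$. The key obstacle in the argument is precisely this boundary contribution: one must control both the transverse transition of $B_p$ through $\mu^{-1}(0)$ and the tangential Szegő behaviour uniformly, and show that the mixed bulk/boundary cross-terms vanish in the limit.

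For the central limit theorem, I would use the cumulants method specific to DPPs: the $k$-th cumulant $\kappa_k(\NN_p[f])$ is a universal polynomial in traces of the form $\int f(x_1)\cdots f(x_k)\,B_p(x_1,x_2)B_p(x_2,x_3)\cdots B_p(x_k,x_1)\,dv_X^{\otimes k}$. After subtracting the mean and normalizing by $N_p^{\alpha}$ with $\alpha=\tfrac{1}{2n}-\tfrac{1}{2}$, the variance converges to the limit \cref{Varmainth} by the previous step, while I would show that $N_p^{k\alpha}\kappa_k(\NN_p[f])\to 0$ for all $k\geq 3$ by the same off-diagonal decay estimates, exploiting that each additional factor of $B_p(x_i,x_{i+1})$ restricts integration either to a $p^{-1/2}$-tube around the diagonal in the bulk, or to the $O(p^{-1/2})$-neighbourhood of $\mu^{-1}(0)$ — both contributing the correct powers of $p$ to make the higher cumulants subdominant. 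Together with the method of moments for the normal distribution, this yields the stated convergence.
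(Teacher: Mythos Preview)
Your overall strategy matches the paper's: both use the DPP moment formulas (the paper's Proposition~\ref{ExpVarprop}), the diagonal/off-diagonal partial Bergman asymptotics for the expectation and bulk variance, a three-region split $\{\mu<-\delta\}\cup\{|\mu|\leq\delta\}\cup\{\mu>\delta\}$ with $\delta\sim p^{(-1+\epsilon)/2}$, and Chebyshev for the LLN. Two points of divergence are worth flagging.

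For the boundary term, the paper does \emph{not} reduce to a universal one-dimensional Szeg\H{o} model in the sense of \cite{RV07}. Instead it goes back to the weight decomposition $P_p^{(-)}=\sum_{m\leq 0}P_p^{(m)}$ into equivariant pieces, so that after passing to the $S^1$-quotient the squared kernel expands as $\sum_{m,r}e^{2\pi i(t-u)(m-r)}P_p^{(m)}(x,y)P_p^{(r)}(y,x)$; integrating against $f(\varphi_t x)f(\varphi_u y)$ over $S^1\times S^1$ pairs each Fourier mode $\hat f_k\overline{\hat f_k}$ with the diagonal-shifted sum $\sum_m P_p^{(m)}P_p^{(m+k)}$. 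The asymptotics of the individual $P_p^{(m)}$ (Gaussians centered at $m/(|\xi_x|\sqrt p)$ in the normal variable, Theorem~\ref{wghtedasyth}) are then summed over $m$ via Euler--Maclaurin, and the factor $|k|$ appears from the first-order Taylor expansion of the shifted Gaussian in $k/\sqrt p$. Your ``Fermi rescaling to the RV07 model'' is morally right but hides this mechanism; it is the equivariant decomposition, not the partial-kernel asymptotics of Theorem~\ref{partBergasyth} directly, that makes the $\sum_k|k||\hat f_k|^2$ structure appear.

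For the CLT, the paper does not estimate higher cumulants at all: once the variance asymptotics \cref{Varmainth} and $N_p\sim p^n\Vol(\{\mu<0\})$ are in hand, it simply invokes Soshnikov's general CLT for determinantal processes \cite{Sos02} via Berman's adaptation in \cite[\S\,6.5]{Ber18}. Your direct cumulant route would also work but is not needed. One small correction: for merely bounded $f$ you only get $\IV[\NN_p[f]]\leq 2\|f\|_\infty^2 N_p=O(p^n)$, not $O(p^{n-1})$; this weaker bound still gives $\IV[\NN_p[f]/N_p]=O(p^{-n})\to 0$, which is all the LLN requires.
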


\cref{mainth} thus states that  as $p\to\infty$, the determinantal point processes
\cref{DPPholdef} associated with the spaces
\cref{Hpdef} admit the equilibrium measure $d\nu:=\mathds{1}_{D}\,dv_X$
over $X$ with droplet $D:=\{\mu<0\}$,
and satisfy a central limit theorem with fluctuations given by the sum
of an homogeneous $H^1$-norm squared over
$D\subset X$ and
an homogeneous $H^{\frac{1}{2}}$-norm
squared over its boundary $\mu^{-1}(0)\subset X$
in the directions of the circle action, as in the
strong Szegö limit theorem \cite{Sze52}.
The proof of \cref{mainth} is described in \cref{DPPsec}.
In particular, the law of large numbers \cref{LLNmainth}
is a consequence of \cref{LLN} while the
asymptotics \cref{Varmainth} of the variance is established in \cref{Varth}, and those asymptotics imply
the central limit
theorem by a general argument from the theory of determinantal
point processes due to
Soshnikov in \cite[Th.\,1]{Sos02}.

In
the special case of $X=\C$ endowed with the action of $S^1\subset\C$
by multiplication,
the law of large numbers \cref{LLNmainth} in \cref{mainth} recovers
the classical circular law for the Ginibre ensemble
established by Ginibre in
\cite[\S\,1]{Gin65}, while the asymptotics
\cref{Varmainth} for the variance in \cref{mainth}
recover the asymptotics \cref{Varintro} established
by Rider and Virag in \cite{RV07}
together with the corresponding central limit theorem.
On the other hand, in the
case of the trivial $S^1$-action on a compact prequantized
Kähler manifold, 
one can arrange for the Kostant moment map
$\mu\in\cinf(X,\R)$ to be constant and strictly positive,
so that $\mu^{-1}(0)=\0$ and the hypotheses of \cref{mainth}
are trivially satisfied. Then
while the spaces $\HH_p=H^0(X,L^p)$ coincide
with the space of holomorphic sections for all $p\in\N$, and
\cref{mainth} recovers results of 
Berman in \cite[Th.\,1.4,\,1.5]{Ber18}.
Note that the $H^{\frac{1}{2}}$-term
in formula \cref{Varmainth} for the variance vanishes
in that case.

The hypotheses of
\cref{mainth} are satisfied
more generally in the case of a compact prequantized
with a compatible $S^1$-action
acting freely on $\mu^{-1}(0)\subset X$, extending the
results of Berman in \cite{Ber18} to include the case of
smooth functions whose support are not necessarily included
in the droplet $D\subset X$,
displaying the extra $H^{\frac{1}{2}}$-term in \cref{Varintro}
in the case $\mu^{-1}(0)\neq\0$.
Up to a standard shift of weights in \cref{Hpdef},
the assumptions of \cref{mainth} are also
satisfied in the important case when
$X$ is a \emph{toric manifold},
with circle action induced by the choice of $S^1\subset\IT^n$
inside the associated torus, which includes
in particular the case $X=\C^n$ for general $n\in\N$.
The law of large numbers \cref{LLNmainth} then
recovers a result of Berman in \cite[Th.\,3.4]{Ber09c}, while
\cref{mainth} establishes a central limit theorem extending
the
result of Rider and Virag in \cite{RV07} in all these cases.
This also includes the case of the real and complex hyperbolic spaces, the relevance of the associated
determinantal point processes being
studied for instance by Bufetov, Fan and Qiu in \cite{BFQ18} in case $X=\IH^n$ is the real hyperbolic space, and
by Bufetov and Qiu in \cite{BQ22} in case $X=\mathbb{B}^n$ is the complex
hyperbolic space.

The proof of \cref{mainth} is based
on the full asymptotic expansion of the \emph{partial Bergman kernel}
as $p\to\infty$ that we establish in \cref{partBergsec},
recovering in particular the results of Ross and Singer
in \cite[Th.\,1.2]{RS17}, Zelditch
and Zhou in \cite[(8),\,Th.\,4]{ZZ19}
and Shabtai in \cite[Th.\,1.7]{Sha25},
who establish the asymptotic expansion of the partial Bergman
kernel
in neighborhoods of size of order $\frac{1}{\sqrt{p}}$
around the boundary
$\mu^{-1}(0)\subset X$ and outside neighborhoods
of size of order $1$ as $p\to\infty$.
The full off-diagonal expansion over arbitrary neighborhoods of $\mu^{-1}(0)$ established in \cref{faroffdiagprop,partBergasyth} plays a crucial role in the proof of \cref{mainth}.
These results
are in turn based on the full off-diagonal
asymptotic expansion
of the \emph{equivariant Bergman kernel} associated with the weight
space \cref{wghtspace} for each $m\in\Z$, extending the analogous
results in the case $m=0$ of
Ma and Zhang in \cite[Th.\,0.2]{MZ08}.
The tools used in this paper are based on the full off-diagonal
asymptotic expansion of the Bergman kernel established
by Dai, Liu and Ma in \cite[Th.\,4.18']{DLM06} and its
extension to non-compact manifolds
established by
Ma and Marinescu in \cite[\S\,3.5]{MM08a}, which we recall
in \cref{asysec}.
A comprehensive introduction of this theory can be found in
their book \cite{MM07}.
Near-diagonal
asymptotic expansions 
for a larger class of
equivariant Bergman kernels in the case $m=0$
have also been
established by Paoletti in \cite[Th.\,1.2]{Pao12},
and of a larger class of
partial Bergman kernels by Coman and Marinescu in \cite{CM17}
and Zelditch and Zhou in \cite{ZZ19b}.

In the
setting of the trivial $S^1$-action on a compact prequantized Kähler manifold
originally considered by Berman in \cite{Ber18},
Charles and Estienne computed in
\cite[Cor.\,1.7]{CE20} the asymptotics of the linear statistics
with respect to the characteristic function $f=\mathds{1}_U$ of an open
subset $U\subset X$ with smooth boundary.
In particular, they establish a central limit
theorem in this case. On the other hand,
Berman also considers in \cite[Th.\,1.4,\,1.5]{Ber18}
the case of  a compact prequantized Kähler manifold
with singular Hermitian metric $h^L$,
in which case the droplet $D\subset X$
does not necessarily coincide with $X$,
and
computes the asymptotics of the linear statistics
with respect to $f\in L^\infty(X,\R)$
sufficiently regular and with support
strictly included in $D$, so that the second term
in formula \cref{Varmainth} still vanishes. 
We hope that the methods of this paper can be used to extend both of
these results to the case of a non-trivial $S^1$-action on a
not necessarily compact prequantized Kähler manifold.


\begin{ackn*}
The author learned the theory of determinantal point processes
through extended conversations with Pierre Lazag, and wishes to thank him first
and foremost. The author also wants to thank Razvan Apredoaei, Paul Dario,
Thibaut Lemoine and Xiaonan Ma for useful discussions.
This project was partially supported by the ANR-23-CE40-0021-01 JCJC
project QCM.
\end{ackn*}

\section{Bergman kernels and circle actions}

After describing the general setting of the
paper in \cref{setting}, we recall in \cref{asysec}
the results of Ma and Marinescu in \cite[Chap.\,6]{MM07}
on the full asymptotic expansion of the Bergman kernel
of prequantized Kähler manifolds with bounded geometry
which will constitute the fundamental tool of this paper,
then introduce in \cref{actionsec} the
Kostant moment map of a compatible $S^1$-action on a
prequantized Kähler manifold.

\subsection{Setting}
\label{setting}

Let $(X,\om)$ be a symplectic manifold of dimension $2n\in\N$
without boundary,
together with a Hermitian line bundle $(L,h^L)$ over $X$ endowed with a
Hermitian
connection $\nabla^L$ satisfying the
\emph{prequantization formula} \cref{preq}.
We also assume that $X$ is equipped with an integrable
complex structure $J\in\cinf(X,\End(TX))$ compatible with $\om$, making $(X,\om,J)$
into a \emph{Kähler manifold} and $(L,h^L)$ into a holomorphic
Hermitian line bundle of which $\nabla^L$ is the Chern connection.
We write $g^{TX}$ for the associated \emph{Kähler metric}
\cref{gTX} and $dv_X$ for the induced Riemannian volume form.
In this paper,
we will always make the assumption that these data have \emph{bounded geometry
at infinity},
meaning that $(X,g^{TX})$ is complete with positive injectivity radius
and that the derivatives at any order of
$R^L,\,J$ and $g^{TX}$ are uniformly bounded in the norms induced
by $h^L$ and $g^{TX}$. Note that this assumption is automatically
satisfied in the important case when $X$ is compact.

For any $p\in\N$, write $h^{L^p}$ and $\nabla^{L^p}$
for the Hermitian metric and connection on
the $p^{\text{th}}$-tensor power
$L^p:=L^{\otimes p}$ respectively induced 
by $h^L$ and $\nabla^L$ on $L$.
We denote by $\cinf_c(X,L^p)$ the space of
compactly supported smooth sections of $L^p$,
endowed with the \emph{$L^2$-Hermitian product}
$\<\cdot,\cdot\>_{p}$ given for any $s_1,s_2\in\cinf_c(X,L^p)$ by the formula
\begin{equation}\label{L2}
\<s_1,s_2\>_{p}:=\int_X h^{L^p}(s_1(x),s_2(x))\,dv_X(x)\,.
\end{equation}
Let
$L^2(X,L^p)$ denote the completion of $\cinf_c(X,L^p)$
with respect to the associated $L^2$-norm, and
write $H^0_{(2)}(X,L^p)\subset L^2(X,L^p)$
for the space
of $L^2$-holomorphic sections of $L^p$.
The following result is a consequence of standard elliptic theory,
and introduces the basic fundamental tool of this paper.

\begin{prop}\label{Bergprop}
{\cite[Rmk.\,1.4.3]{MM07}}
For any $p\in\N$, the orthogonal projection onto
$H^0_{(2)}(X,L^p)\subset L^2(X,L^p)$
with respect to the $L^2$-product \cref{L2}, denoted by
\begin{equation}\label{projdef}
P_p:L^2(X,L^p)\longrightarrow H^0_{(2)}(X,L^p)
\end{equation}
admits a smooth Schwartz kernel
$P_p(\cdot,\cdot)\in\cinf(X\times X,L^p \boxtimes (L^p)^*)$
with respect to $dv_X$, called the \emph{Bergman kernel},
characterized for any
$s\in\cinf(X,L^p)$ and $x\in X$ by
the formula
\begin{equation}\label{ker}
(P_p s)(x)=\int_X P_p(x,y).s(y)\,dv_X(y).
\end{equation}
\end{prop}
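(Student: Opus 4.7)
The claim is that the orthogonal projector onto a closed subspace of $L^2$-holomorphic sections carries a smooth integral kernel. The cleanest route is via the reproducing kernel Hilbert space framework, and the bounded geometry assumption from \cref{setting} is what makes the relevant estimates uniform across the non-compact manifold $X$.

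The plan is first to verify that $H^0_{(2)}(X,L^p)$ is a closed subspace of $L^2(X,L^p)$ all of whose elements are smooth: this follows from elliptic regularity of $\dbar$ (equivalently, of the Kodaira Laplacian $\Box_p=\dbar^{L^p,*}\dbar^{L^p}$), together with the classical fact that $L^2_{\mathrm{loc}}$-convergence of holomorphic sections implies $\cinf_{\mathrm{loc}}$-convergence (via the sub-mean-value inequality on trivializing neighborhoods). Next, I would establish the crucial uniform pointwise bound: there exists a constant $C_p>0$, \emph{independent of $x\in X$}, such that every $s\in H^0_{(2)}(X,L^p)$ satisfies
\begin{equation*}
|s(x)|_{h^{L^p}}^{2}\leq C_p\,\|s\|_{L^2}^2\,.
\end{equation*}
In a geodesic ball of radius smaller than the injectivity radius, trivialize $L^p$ by a holomorphic frame with prescribed norm, so that the coefficient becomes a holomorphic function to which a local Bergman-type sub-mean-value estimate applies. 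Bounded geometry — completeness, positive injectivity radius, and uniform control on all derivatives of $R^L,\,J,\,g^{TX}$ — ensures that both the size of the ball and the Hölder constants of the trivializing frame can be chosen uniformly in $x$.

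Third, the above bound shows that for each $x\in X$ the evaluation $\ev_x:H^0_{(2)}(X,L^p)\to L_{x}^{p}$ is continuous; the Riesz representation theorem then produces, for each $x\in X$, an element $P_p(x,\cdot)\in H^0_{(2)}(X,L^p)\otimes L^p_x$ satisfying the reproducing identity
\begin{equation*}
s(x)=\int_X P_p(x,y)\cdot s(y)\,dv_X(y)\,\qquad\text{for all }s\in H^0_{(2)}(X,L^p).
\end{equation*}
Symmetrizing via the orthogonality of the projector gives the desired section $P_p(\cdot,\cdot)\in\cinf(X\times X,L^p\boxtimes(L^p)^*)$ and identifies it with the Schwartz kernel of $P_p$ through \cref{ker}.

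Finally, smoothness of $P_p(x,y)$ in both entries is extracted as follows: for each fixed $x$, $y\mapsto P_p(x,y)$ is antiholomorphic in $y$ (it lies in the conjugate space $H^0_{(2)}(X,L^p)^*$), hence smooth by elliptic regularity; joint smoothness in $(x,y)$ follows from differentiating the reproducing identity under the integral sign and invoking uniform elliptic estimates for holomorphic sections applied to $x$-derivatives of the representing functional $\ev_x$. The main obstacle is precisely the uniform pointwise bound on holomorphic sections: on a compact manifold it is essentially automatic, whereas on the non-compact $X$ one must exploit bounded geometry to transfer local holomorphic sub-mean-value estimates to a uniform global bound, which is exactly what is needed to close the argument.
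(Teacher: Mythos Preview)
The paper does not prove this proposition: it is stated with the citation \cite[Rmk.\,1.4.3]{MM07} and described only as ``a consequence of standard elliptic theory,'' so there is no argument in the paper to compare against.

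Your reproducing-kernel outline is correct in substance, but you misidentify where the difficulty lies. The existence of a smooth Bergman kernel does \emph{not} require the bounded-geometry hypothesis, and in particular the pointwise bound $|s(x)|^2_{h^{L^p}}\leq C_p\,\|s\|_{L^2}^2$ need not hold with $C_p$ independent of $x\in X$. All that is needed is that for each fixed $x$ the evaluation $\ev_x:H^0_{(2)}(X,L^p)\to L^p_x$ is continuous, and this follows from the sub-mean-value inequality (or interior elliptic estimates) on a single coordinate ball around $x$, with a constant that may depend on $x$. The same purely local estimates show that $H^0_{(2)}(X,L^p)$ is closed in $L^2$, and joint smoothness of the resulting kernel follows because $P_p(x,y)$ is holomorphic in $x$, anti-holomorphic in $y$, and locally bounded (via $|P_p(x,y)|^2\leq P_p(x,x)\,P_p(y,y)$ together with local uniform bounds on the diagonal). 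None of this uses bounded geometry. That hypothesis becomes essential only later, for the \emph{uniform} off-diagonal and near-diagonal estimates of \cref{theta,asy}, not for \cref{Bergprop} itself; so your closing paragraph attributes the ``main obstacle'' to the wrong step.
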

%

Finally, for any Riemannian manifold $(Y,g^{TY})$,
we will write $d^{Y}(\cdot,\cdot)$ for the associated distance over $Y$,
and for any $x\in Y$ and $\epsilon>0$, we will write
$B^Y(x,\epsilon)\subset Y$ for the geodesic ball of
center $x\in Y$ and radius $\epsilon>0$.

\subsection{Asymptotic expansion of the Bergman kernel}
\label{asysec}

Let us consider the setting described in \cref{setting},
and for any $r,\,p\in\N$, let $|\cdot|_{\CC^r}$ denote the local
$\CC^r$-norm on $L^p \boxtimes (L^p)^*$ induced by
$h^{L^p}$ and $\nabla^{L^p}$. 
The following result describes the off-diagonal decay of the Bergman kernel
introduced in \cref{Bergprop}.

\begin{theorem}{\cite[Th.\,1]{MM15}}
\label{theta}
There exists $c>0$ such that for any $r\in\N$, there is $C_r>0$
such that for all $p\in\N$ and $x,y\in X$, the following estimate holds,
\begin{equation}
|P_p(x,y)|_{\CC^r}\leq C_r p^{n+\frac{r}{2}}\,e^{-c\sqrt{p}\,d^X(x,y)}\,.
\end{equation}
\end{theorem}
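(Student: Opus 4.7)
The plan is to identify $P_p$ with a spectral projector of a Dirac-type operator, exploit a uniform spectral gap of order $p$ coming from the prequantization condition, and combine finite propagation speed of the wave equation with an Agmon-type weighted estimate to extract exponential off-diagonal decay. The bounded geometry hypothesis will be used throughout to ensure uniformity of all local estimates in $p$.

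\textbf{Step 1 (spectral gap).} Introduce the modified Dirac operator $D_p=\sqrt{2}(\bar\partial^{L^p}+\bar\partial^{L^p,*})$ acting on $L^2(X,\Lambda^{0,\bullet}T^{*(0,1)}X\otimes L^p)$. The prequantization formula $R^L=-2\pi\i\,\om$ combined with the Bochner-Kodaira-Nakano identity yields that, for $p$ large enough, $\ker D_p\cap L^2=H^0_{(2)}(X,L^p)$ concentrated in bidegree $(0,0)$, and there is a constant $c_1>0$ such that $\Spec(D_p^2)\subset\{0\}\cup[c_1p,+\infty)$ for all $p\geq p_0$. The orthogonal projector onto $\ker D_p$, restricted to bidegree $(0,0)$, coincides with $P_p$.

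\textbf{Step 2 (localization via finite propagation).} Choose $\phi\in\cinf_c(\R)$ even with $\phi(0)=1$ and $\Supp\phi\subset(-\sqrt{c_1},\sqrt{c_1})$. By functional calculus and Step 1, $P_p=\phi(D_p/\sqrt{p})$ exactly. Using $\phi$ even, Fourier inversion gives
\[
P_p=\frac{\sqrt{p}}{\pi}\int_0^\infty\widehat\phi(\sqrt{p}\,u)\cos(uD_p)\,du.
\]
Since $(X,g^{TX})$ is complete, $\cos(uD_p)$ has unit finite propagation speed, so its Schwartz kernel vanishes outside $\{d^X(x,y)\leq|u|\}$. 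Restricting the integral to $u\geq d^X(x,y)$ and combining the rapid decay of $\widehat\phi$ with uniform Sobolev and elliptic estimates on $\cos(uD_p)(x,y)$ already yields polynomial off-diagonal decay of arbitrarily large order, namely $|P_p(x,y)|_{\CC^r}\leq C_{N,r}\,p^{n+r/2}(1+\sqrt{p}\,d^X(x,y))^{-N}$, with $\CC^r$-bounds obtained by differentiating under the integral.

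\textbf{Step 3 (Agmon upgrade).} The main obstacle is to sharpen this polynomial bound into the exponential rate $e^{-c\sqrt{p}\,d^X(x,y)}$. For a Lipschitz weight $\rho:X\to\R$ with $|\nabla\rho|_{g^{TX}}\leq a$, conjugation gives $e^\rho D_p e^{-\rho}=D_p-c(\nabla\rho)$, where the Clifford perturbation has operator norm at most $a$. Using the spectral gap of Step 1, a quadratic-form argument shows that for $a\leq c_0\sqrt{p}$ with $c_0>0$ depending only on $c_1$, the weighted projector $e^\rho P_p e^{-\rho}$ remains bounded on $L^2(X,L^p)$ uniformly in $p$ and in $\rho$. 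Applying this bound with $\rho(z):=c_0\sqrt{p}\,\chi(d^X(z,y))$ for a Lipschitz truncation $\chi$, and converting the resulting weighted $L^2$-estimate on the reproducing kernel $P_p(\cdot,y)$ into a pointwise $\CC^r$-estimate via elliptic regularity for $D_p^2$ on balls of radius $1/\sqrt{p}$ (uniform in $p$ thanks to bounded geometry), yields the claimed estimate. The principal technical difficulty lies in simultaneously controlling the weighted $L^2$-norms and the rescaled Sobolev constants at the scale $1/\sqrt{p}$.
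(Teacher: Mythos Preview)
The paper does not give its own proof of this statement: it is quoted from \cite{MM15} and used as a black-box input throughout \cref{asysec,eqBergsec,partBergsec}. There is therefore nothing in the present paper to compare your argument against.

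Your outline is nonetheless a correct strategy, and it is worth recording how it relates to the argument in the cited source. Steps~1 and~2 reproduce the spectral-gap/finite-propagation localization of Dai--Liu--Ma and Ma--Marinescu (see \cite{DLM06} and \cite[\S\,4.1]{MM07}), and you correctly observe that a smooth compactly supported $\phi$ only yields decay faster than any polynomial in $\sqrt{p}\,d^X(x,y)$, since $\widehat\phi$ cannot decay exponentially. Your Step~3, the Combes--Thomas/Agmon conjugation, is a legitimate way to obtain the exponential upgrade and can be made rigorous: writing $P_p$ as a contour integral of $(z-D_p^2)^{-1}$ around $0$ with radius of order $p$, the conjugated resolvent is controlled by a Neumann series because $(z-D_p^2)^{-1}D_p$ has operator norm $O(p^{-1/2})$ on that contour, so the first-order perturbation $D_p\,c(\nabla\rho)+c(\nabla\rho)\,D_p$ is harmless once $|\nabla\rho|\lesssim\sqrt{p}$. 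This is, however, not the route taken in \cite{MM15}, where the exponential rate is extracted directly from the finite-propagation representation of Step~2 by a sharper choice of the cutoff function combined with heat-kernel comparison, without passing through weighted resolvent estimates. Your approach is arguably more flexible (it needs no explicit model kernel), at the cost of the resolvent bookkeeping you flag in your last sentence.
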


For any $x\in X$, let $|\cdot|$ denote the Euclidean norm on
$T_xX$ and on $T_x^*X$ induced by $g^{T_xX}$, and for any subspace $E_x\subset T_xX$,
let $B^{E_x}(0,\epsilon_0)\subset E_x$ denote the open ball
in $E_x$ of radius $\epsilon_0>0$ with respect to the induced norm.
We write $B^{TX}(0,\epsilon_0)\subset TX$ for the ball bundle over $X$ whose
fibre over any $x\in X$ is given by $B^{T_xX}(0,\epsilon_0)\subset T_xX$.
Recall that since $(X,g^{TX})$
has bounded geometry
at infinity, its injectivity radius is bounded from below.

To describe asymptotic estimates for the Bergman kernel in a neighborhood
of the diagonal,
we will need the following definition.

\begin{defi}\label{chart}
Given $\epsilon_0>0$ smaller than the injectivity radius of $(X,g^{TX})$,
we say that a smooth map $\psi:B^{TX}(0,\epsilon_0)\to X$ is
a \emph{bounded family of charts} if for any $x\in X$,
its restriction $\psi_x:B^{T_xX}(0,\epsilon_0)\to X$
to $B^{T_xX}(0,\epsilon_0)\subset B^{TX}(0,\epsilon_0)$
is a diffeomorphism on its image $U_x\subset X$
satisfying $\psi_x(0)=x$ and $d\psi_{x,0}=\Id_{T_xX}$,
and if for any $r\in\N$,
there exists $C_r>0$ such that for all $x\in X$, we have
\begin{equation}
\left|\exp^{X}_x\circ\,\psi_x^{-1}\right|_{\CC^r}\leq C_r\,,
\end{equation}
where $\exp^{X}:B^{TX}(0,\epsilon_0)\fl X$ is the \emph{exponential map}
of $(X,g^{TX})$.
\end{defi}

Note that the the exponential map $\exp^{X}:B^{TX}(0,\epsilon_0)\fl X$
is itself a bounded family of charts by definition.
We fix one of them for the rest of the section. 

For any $x\in X$, identify $L$ over the image
$U_x\subset X$ of $\psi_x:B^{T_xX}(0,\epsilon_0)\to X$
with $L_x$ through parallel
transport with respect to $\nabla^{L}$ along radial lines
of $B^{T_xX}(0,\epsilon_0)$ and
pick a unit vector $e_x\in L_x$ to identify $L_x$ with $\C$.
Under the natural isomorphism $\End(L^p)\simeq\C$, the formulas below will not
depend on this identification.
For any $p\in\N$ and any kernel
$K_p(\cdot,\cdot)\in\cinf(X\times X,L^p\boxtimes (L^p)^*)$,
we write
\begin{equation}\label{Kxdef}
K_{p,x}(Z,Z')\in\C
\end{equation}
for its image in this trivialization evaluated at
$Z,Z'\in B^{T_xX}(0,\epsilon_0)$, and for any  smooth $f\in\cinf(X,\C)$,
we write $f_x:=\psi_x^*f$ for its pullback in these coordinates.
%
We then get the following immediate consequence of
\cref{theta}.
\begin{cor}\label{thetagal}
For any $r,\,k\in\N$ and $\epsilon>0$, there exists $C_k>0$ such that for all $p\in\N,\,x\in X,$ and $Z,Z'\in B^{T_xX}(0,\epsilon_0)$ satisfying $|Z-Z'|>\epsilon p^{\frac{-1+\epsilon}{2}}$, the following estimate holds,
\begin{equation}
|P_{p,x}(Z,Z')|_{\CC^r}\leq C_k\,p^{-k}.
\end{equation}
\end{cor}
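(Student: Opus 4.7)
The idea is to transplant the global off-diagonal bound of \cref{theta} into the trivializations $\psi_x$ and then exploit the fact that a stretched exponential $e^{-c\sqrt{p}\,|Z-Z'|}$ beats any power of $p$ once $|Z-Z'|$ is as large as $p^{(-1+\epsilon)/2}$.

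First I would compare the Euclidean distance $|Z-Z'|$ in $T_xX$ with the Riemannian distance $d^X(\psi_x(Z),\psi_x(Z'))$. Since $\psi$ is a bounded family of charts in the sense of \cref{chart}, the map $\exp^X_x\circ\psi_x^{-1}$ and its inverse have uniformly bounded $\CC^r$-norms, and $d(\exp^X_x\circ\psi_x^{-1})_0=\Id_{T_xX}$. A mean value argument on the radial segment from $Z$ to $Z'$, using the uniform $\CC^1$-bound on $\psi_x$ and $\psi_x^{-1}$, yields a constant $c'>0$ independent of $x\in X$ such that, possibly after shrinking $\epsilon_0$,
\begin{equation*}
d^X(\psi_x(Z),\psi_x(Z'))\geq c'\,|Z-Z'|\quad\text{for all }x\in X,\ Z,Z'\in B^{T_xX}(0,\epsilon_0).
\end{equation*}

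Second, I would transfer the local $\CC^r$-norm on $L^p\boxtimes (L^p)^*$ to the $\CC^r$-norm in the $(Z,Z')$ coordinates. Under the trivialization by radial parallel transport with respect to $\nabla^L$ along rays of $B^{T_xX}(0,\epsilon_0)$, the connection one-form of $\nabla^{L^p}$ has $\CC^r$-norm bounded uniformly in $x\in X$ by the bounded-geometry assumption of \cref{setting}, so derivatives in the coordinates $(Z,Z')$ differ from the covariant derivatives defining $|\cdot|_{\CC^r}$ on $L^p\boxtimes(L^p)^*$ by terms bounded polynomially in $p$, with constants independent of $x$. Combined with the uniform $\CC^r$-bound on $\psi_x$, this gives a constant $C'_r>0$ and an integer $N_r\in\N$ such that
\begin{equation*}
|P_{p,x}(Z,Z')|_{\CC^r}\leq C'_r\,p^{N_r}\,\sup_{|\alpha|+|\beta|\leq r}|\partial^\alpha_Z\partial^\beta_{Z'}P_p|_{\CC^0}\bigl(\psi_x(Z),\psi_x(Z')\bigr),
\end{equation*}
which in turn is controlled by $|P_p(\psi_x(Z),\psi_x(Z'))|_{\CC^{r'}}$ for some $r'\geq r$.

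Third, applying \cref{theta} at the points $\psi_x(Z),\psi_x(Z')\in X$ and using the distance comparison above,
\begin{equation*}
|P_{p,x}(Z,Z')|_{\CC^r}\leq C''_{r}\,p^{n+\frac{r'}{2}+N_r}\,e^{-cc'\sqrt{p}\,|Z-Z'|}.
\end{equation*}
Finally, for $|Z-Z'|>\epsilon p^{(-1+\epsilon)/2}$ one has $\sqrt{p}\,|Z-Z'|>\epsilon p^{\epsilon/2}$, so the exponential factor is bounded by $e^{-cc'\epsilon p^{\epsilon/2}}$. Since this decays faster than any polynomial in $p$, for any prescribed $k\in\N$ one can absorb all polynomial prefactors into a single constant $C_k>0$ and conclude $|P_{p,x}(Z,Z')|_{\CC^r}\leq C_k p^{-k}$.

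The only genuinely delicate point is the uniformity in $x\in X$ of the distance comparison and of the passage between the covariant $\CC^r$-norm on $L^p\boxtimes(L^p)^*$ and the coordinate $\CC^r$-norm; both rely crucially on the bounded-geometry hypothesis of \cref{setting} and on \cref{chart}. Everything else is a one-line juggling of the exponential against the polynomial.
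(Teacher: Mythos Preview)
Your proposal is correct and follows exactly the approach the paper intends: the paper states \cref{thetagal} as an immediate consequence of \cref{theta} without further details, and your argument spells out precisely the two points that make this immediate, namely the uniform comparison $d^X(\psi_x(Z),\psi_x(Z'))\geq c'|Z-Z'|$ coming from \cref{chart} and the fact that $e^{-c\sqrt{p}\cdot\epsilon p^{(-1+\epsilon)/2}}=e^{-c\epsilon p^{\epsilon/2}}$ absorbs any polynomial prefactor. Your care with the passage between the covariant $\CC^r$-norm and the coordinate $\CC^r$-norm (picking up at worst polynomial factors in $p$) is a detail the paper suppresses entirely.
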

We use the following explicit local model for the Bergman kernel
from \cite[(3.25)]{MM08b}, for any $x\in M$ and $Z,Z'\in T_xX$,
\begin{equation}\label{PPreal}
\PP_x(Z,Z'):=\exp\left(-\frac{\pi}{2}|Z-Z'|^2-\pi\sqrt{-1}\om_x(Z,Z')\right)\,.
\end{equation}
Let $|\cdot|_{\CC^r(X)}$ denote the $\CC^r$-norm
over the fibred product $B^{TX}(0,\epsilon_0)\times_X B^{TX}(0,\epsilon_0)$
in the direction of $X$ via the Levi-Civita
connection. We can now state the following fundamental result on the 
near diagonal expansion of the Bergman kernel,
which was first established by Ma and Marinescu in
\cite[\S\,3.5]{MM08a}
following Dai, Liu and Ma in \cite[Th.\,4.18']{DLM06}.
We present this
result in a simplified form
following the approach
of Lu, Ma and Marinescu in \cite[Th.\,2.1]{LMM16},
and which can be directly deduced from \cite[Problem\,6.1]{MM07}.

\begin{theorem}\label{asy}
There is a family $\{J_{r,x}(Z,Z')\in\C[Z,Z']\}_{r\in\N}$ of
polynomials in $Z,\,Z'\in T_xX$ depending smoothly on $x\in X$,
of the same parity as $r\in\N$ and of total degree less than $3r\in\N$,
such that for any $k,\,j,\,j'\in\N$ and
$\delta>0$, there is $\epsilon_0>0$ and $C>0$
such that for all $\epsilon\in\,]0,\epsilon_0[,\,x\in X$, $p\in\N$ and all $Z,Z'\in T_xX$ satisfying
$|Z|,|Z'|<\epsilon p^{\frac{-1+\epsilon}{2}}$, we have
\begin{multline}\label{asyfla}
\sup_{|\alpha|+|\alpha'|\leq j}\Big|\Dk{\alpha}{Z}\Dk{\alpha'}{Z'}\big(p^{-n}P_{p,x}(Z,Z')\\
-\sum_{r=0}^{k-1} J_{r,x}(\sqrt{p}Z,\sqrt{p}Z')\PP_x(\sqrt{p}Z,\sqrt{p}Z')\left.p^{-\frac{r}{2}}\big)\right|_{\CC^{j'}(X)}\leq Cp^{-\frac{k-j}{2}+\delta}.
\end{multline}
Furthermore, for all $x\in X$, we have $J_{0,x}\equiv 1$.
\end{theorem}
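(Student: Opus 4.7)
The plan is to follow the strategy of Dai-Liu-Ma \cite{DLM06} and Ma-Marinescu \cite{MM07, MM08a}, reducing the near-diagonal expansion to a perturbative analysis of a rescaled Kodaira Laplacian on the tangent space. First I would invoke the off-diagonal exponential decay of \cref{theta}, together with a fixed cutoff supported on the geodesic ball $B^X(x, \epsilon_0)$, to show that for $|Z|, |Z'| < \epsilon p^{\frac{-1+\epsilon}{2}}$ the kernel $P_{p,x}(Z,Z')$ may be replaced by the Schwartz kernel of the analogous projector for the operator localized to the chart $\psi_x$, up to an error that is $O(p^{-\infty})$ in every $\CC^r$-norm. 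Thanks to the bounded geometry assumption, every constant in such estimates is uniform in $x \in X$, which is crucial for the uniformity claimed in \cref{asy}.

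Second, working in the normal coordinate chart $\psi_x : B^{T_xX}(0, \epsilon_0) \to U_x$ with $L$ trivialized by parallel transport along radial lines and with $e_x \in L_x$ chosen, I would introduce the rescaling $Z = u/\sqrt{p}$ and consider the conjugated and rescaled Kodaira Laplacian $\LL_t$ with $t = 1/\sqrt{p}$. A direct computation using the Taylor expansions of $g^{TX}$, $\nabla^L$ and $R^L$ at $x$ in these coordinates shows that $\LL_t = \LL_0 + \sum_{r \geq 1} t^r \LL_r$, where $\LL_0$ is an explicit harmonic oscillator on $T_xX$ determined by $\om_x$ and $g^{T_xX}$, and each $\LL_r$ is a differential operator whose coefficients are polynomials in $u$ of degree at most $r+2$ and of parity $r$, the parity being forced by the choice of radial gauge.

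Third, since $\LL_0$ has a spectral gap and its zero-eigenspace has Bergman kernel $\PP_x(u, u')$ as in \cref{PPreal}, I would represent the rescaled Bergman projector as a Cauchy contour integral $\frac{1}{2\pi \i}\oint_\Gamma (\lambda - \LL_t)^{-1}\,d\lambda$ over a small contour enclosing $0$ that stays in the resolvent set of $\LL_0$, and which therefore remains in the resolvent set of $\LL_t$ for $t$ small, uniformly in $x$, by a standard perturbation argument using the spectral gap at spectral parameter $0$. Expanding the resolvent as a Neumann series in $t$ and collecting terms yields the formal expansion claimed in \cref{asyfla}, with each $J_{r,x}(u,u') \PP_x(u,u')$ produced by applying a fixed polynomial combination of $\LL_1, \dots, \LL_r$ to $\PP_x$ and projecting back onto $\Ker \LL_0$. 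Since each factor $\LL_j$ adds at most a polynomial factor of degree $3$ in $(u,u')$ and shifts the parity by $j$, an induction yields $\deg J_{r,x} < 3r$ together with the parity claim, while $J_{0,x} \equiv 1$ comes from the leading order, which is exactly the model projector.

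The main obstacle is to upgrade the weighted Sobolev estimates natural for the rescaled operator $\LL_t$ to the pointwise $\CC^{j'}$-estimates in the $X$-direction required in \cref{asyfla}, uniformly in $x \in X$ and for $|u|, |u'|$ allowed to grow as $\epsilon p^{\epsilon/2}$. This requires Sobolev embedding with constants controlled by bounded geometry, elliptic regularity for $\LL_t$ uniform in $t \in [0,1]$ and $x \in X$, and careful differentiation of the expansion in the parameters; the latter produces the factors of $p^{j/2}$ absorbed by the exponent $-(k-j)/2$ in the error, while the small loss $\delta > 0$ compensates for the mild polynomial growth of $|u|, |u'|$ near the boundary of the allowed region.
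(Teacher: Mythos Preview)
Your proposal is correct and follows exactly the Dai--Liu--Ma / Ma--Marinescu strategy that the paper invokes; note that the paper does not give its own proof of \cref{asy} but simply cites it from \cite[Th.\,4.18']{DLM06}, \cite[\S\,3.5]{MM08a}, \cite[Problem\,6.1]{MM07} and \cite[Th.\,2.1]{LMM16}. Your outline (localization via \cref{theta}, rescaling to a perturbed harmonic oscillator $\LL_t$, resolvent contour expansion, and weighted Sobolev-to-$\CC^{j'}$ bounds uniform in $x$ via bounded geometry) is precisely the content of those references.
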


The following result is a direct consequence of \cref{theta,asy}.

\begin{cor}\label{asydiag}
For any $r\in\N$,
there exists $C_r>0$
such that for any $p\in\N$ and any $x,\,y\in X$, we have
\begin{equation}
\left|\,p^{-n}\,P_p(x,y)\,\right|_{\CC^r}\leq C_r
\quad\text{ and }\quad
\left|\,p^{-n}\,P_p(x,x)-1\,\right|_{\CC^r}\leq C_r\,p^{-1}\,.
\end{equation}
\end{cor}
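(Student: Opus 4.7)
The plan is to deduce both estimates by combining the off-diagonal exponential decay of \cref{theta} with the near-diagonal asymptotic expansion of \cref{asy}. These are precisely the two inputs cited immediately before the statement, and both inequalities should essentially be read off from them.

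For the first inequality, I would split into two regimes. When $d^X(x,y) \geq \epsilon_0$ for some positive $\epsilon_0$ smaller than the injectivity radius, \cref{theta} directly yields $|p^{-n}P_p(x,y)|_{\CC^r} \leq C_r\, p^{r/2}\, e^{-c\sqrt{p}\,\epsilon_0}$, and the exponential decay dominates any polynomial factor in $p$, so the quantity is uniformly bounded (and in fact tends to $0$). When $d^X(x,y) < \epsilon_0$, I would pass to normal coordinates centered at $x$ via \cref{chart}, write $y = \psi_x(Z)$ with $|Z| < \epsilon_0$, and apply \cref{asy} with an integer $k$ chosen large enough in terms of $r$ so that the remainder is $o(1)$. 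The leading part is then a finite sum of polynomial-Gaussian products $J_{q,x}(0,\sqrt{p}Z)\PP_x(0,\sqrt{p}Z)$, each of which is uniformly bounded on $X$ thanks to the classical fact that a polynomial of fixed degree times a Gaussian is bounded on $\R^{2n}$, together with the uniform control on the coefficients of $J_{q,x}$ in $x$ that comes from the bounded geometry hypothesis.

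For the second inequality, I would specialize \cref{asy} to the diagonal $Z=Z'=0$, taking $k=3$. Since $J_{0,x}\equiv 1$ by the last assertion of \cref{asy} and $J_{1,x}$ has odd parity, so that $J_{1,x}(0,0)=0$, the expansion collapses to
\begin{equation*}
p^{-n}P_p(x,x) \;=\; 1 + J_{2,x}(0,0)\,p^{-1} + O\bigl(p^{-3/2+\delta}\bigr).
\end{equation*}
Since $|J_{2,x}(0,0)|$ is bounded uniformly in $x\in X$ by bounded geometry, this gives $|p^{-n}P_p(x,x)-1| \leq C p^{-1}$. The $\CC^r$ refinement for arbitrary $r$ follows from the same calculation applied after $r$ derivatives, simply selecting a correspondingly larger $k$ in \cref{asy}.

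The only mildly delicate step is the bookkeeping between the order $k$ of the expansion, the number of derivatives $r$, and the target decay rate $p^{-1}$: one must choose $k$ so that the remainder $C p^{-(k-j)/2+\delta}$ in \cref{asy} is strictly better than $p^{-1}$ after accounting for $j=r$ derivatives, and one must verify that the model polynomial-Gaussian terms keep uniformly bounded $\CC^r$-norms under the rescaling $W=\sqrt{p}Z$. Both points are routine consequences of Gaussian integrability and bounded geometry, so I do not expect a genuine obstacle beyond careful accounting.
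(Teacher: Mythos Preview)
Your approach matches the paper's, which simply declares the corollary a direct consequence of \cref{theta} and \cref{asy} without further proof. One correction: \cref{asy} only applies on the shrinking ball $|Z|,|Z'|<\epsilon p^{\frac{-1+\epsilon}{2}}$, not on all of $|Z|<\epsilon_0$ as you wrote, so the split in your first-inequality argument should be taken at the threshold $\epsilon p^{\frac{-1+\epsilon}{2}}$ rather than at $\epsilon_0$. This is harmless: when $d^X(x,y)\geq\epsilon p^{\frac{-1+\epsilon}{2}}$, \cref{theta} still gives $|p^{-n}P_p(x,y)|_{\CC^r}\leq C_r\,p^{r/2}\,e^{-c\epsilon p^{\epsilon/2}}$, which is bounded (indeed $o(1)$) since the exponential beats the polynomial. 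With that adjustment your argument is complete.
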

%
%

\subsection{Circle actions on prequantized Kähler manifolds}
\label{actionsec}

Let $(X,\om)$ be a symplectic manifold
together with a Hermitian line bundle $(L,h^L)$ over $X$
with Hermitian connection $\nabla^L$ satisfying the
prequantization formula \cref{preq},
and assume that $L$ is endowed with
a compatible action of the circle $S^1$ lifting an action on $X$,
so that it preserves $h^L$ and $\nabla^L$.
Write $\varphi_t:X\to X$ for the
flow of diffeomorphisms of $X$
induced by the action of $S^1\simeq\R/\Z$ for all $t\in\R$,
and for any $p\in\N$, write
$\varphi_{t,p}:L^p\to L^p$ for its lift to $L^p$.
We consider the induced action by pullback on a smooth sections
$s\in\cinf(X,L^p)$ defined for all $x\in X$ by
\begin{equation}\label{action}
(\varphi_t^* s)(x):=\varphi_{t,p}^{-1}\,s(\varphi_t(x))\,.
\end{equation}
Let $\xi\in\cinf(X,TX)$ denote
the associated fundamental vector field over $X$, defined for all $x\in X$ by
\begin{equation}\label{xidef}
\xi_x:=\frac{d}{dt}\Big|_{t=0}\varphi_t(x)\,.
\end{equation}
The following definition is due to Kostant
in \cite[Th.\,4.5.1]{Kos70}.

\begin{defi}\label{Kostantmudef}
The \emph{moment map} $\mu\in\cinf(X,\R)$ of the action of
$S^1$ on $(L,h^L,\nabla^L)$ over $X$ is defined by the following formula,
for all $s\in\cinf(X,L)$, 
\begin{equation}\label{Kostantmufla}
\mu\,s
=\frac{\sqrt{-1}}{2\pi}\left(\nabla^L_{\xi}s-\frac{d}{dt}\Big|_{t=0}\varphi_t^* s\right)\,.
\end{equation}
\end{defi}
Formula \cref{Kostantmufla} is called the \emph{Kostant formula}.
The right-hand side of formula
\cref{Kostantmufla} is
$\cinf(X,\R)$-linear, so that the left-hand side defines in fact a function
$\mu\in\cinf(X,\R)$, invariant by the action of $S^1$.
Furthermore, \cref{Kostantmudef} together with the prequantization formula
\cref{preq} implies that for all $\eta\in\cinf(X,TX)$, we have
\begin{equation}\label{momentfla}
\om(\eta,\xi)=d \mu.\eta\,,
\end{equation}
where $\xi\in\cinf(X,TX)$ is the fundamental vector field \cref{xidef},
making the action of $S^1$ into a
\emph{Hamiltonian action} on the symplectic manifold $(X,\om)$,
with Hamiltonian $\mu\in\cinf(X,\R)$.

Now for any $p\in\N$, the Kostant formula \cref{Kostantmufla} applied
to any $s\in\cinf(X,L^p)$ gives
\begin{equation}\label{KostantK}
\frac{d}{dt}\Big|_{t=0}\varphi_{t}^* s=
\big(\nabla^{L^p}_{\xi}+2\pi\sqrt{-1}p\mu\big)s\,.
\end{equation}
For all $t\in\R$ and $x\in X$,
write $\tau_{t,p}:L^p_{\varphi_t(x)}\to L^p_x$
for the parallel transport with respect to
$\nabla^{L^p}$
along the path $u\mapsto\varphi_u(x)$ for $u\in[0,t]$ from $u=t$ to $u=0$,
defined for any $s\in\cinf(X,L^p)$ by
\begin{equation}\label{tautp}
\left(\nabla^{L^p}_\xi s\right)(x)=\frac{d}{dt}\Big|_{t=0}\tau_{t,p}\,s(\varphi_t(x))\,.
\end{equation}
Then formulas \cref{action} and
\cref{KostantK} imply that for any $p\in\N$ and $t\in\R$, we have
\begin{equation}\label{alphaf}
\varphi_{t,p}^{-1}=e^{2\pi\i tp \mu}\,\tau_{t,p}\,.
\end{equation}
In the sequel, we will always assume that the action of $S^1$ on $X$ preserves an
integrable
complex structure $J\in\cinf(X,\End(TX))$ compatible with $\om$ as in \cref{setting},
so that we have an induced action of $S^1$ on the space $H^0_{(2)}(X,L^p)$ of
$L^2$-holomorphic sections preserving the $L^2$-product
\eqref{L2}, and
for each $m\in\Z$, we define the
weight space $H^0_{(2)}(X,L^p)_m\subset H^0_{(2)}(X,L^p)$
of weight $m\in\Z$
as in \cref{wghtspace} and the Hilbert direct sum
$\HH_p\subset H^0_{(2)}(X,L^p)$ of the
negative weight spaces as in \cref{Hpdef}.

\section{Partial and equivariant Bergman kernels}
\label{parteqBergsec}

In this section,
we consider a Kähler manifold $(X,\om,J)$ prequantized by
$(L,h^L,\nabla^L)$ in the sense of \cref{preq}
with bounded geometry
at infinity in the sense of \cref{setting} and endowed
with a compatible $S^1$-action as in \cref{actionsec}.

In \cref{eqBergsec}, we introduce the equivariant Bergman
kernels and establish their off-diagonal properties, while in \cref{coordsec},
we establish their full off-diagonal expansion as $p\to\infty$.
In \cref{partBergsec}, we introduce the partial Bergman kernel
and use the results of \cref{eqBergsec,coordsec} to establish its
full off-diagonal expansion as $p\to\infty$.


\subsection{Equivariant Bergman kernels}
\label{eqBergsec}

Recall the Bergman kernel defined
in \cref{Bergprop} for any $p\in\N$
as the smooth Schwartz kernel of the orthogonal projection
$P_p:L^2(X,L^p)\to H^0_{(2)}(X,L^p)$.
We start this section by the following Proposition,
which serves as a definition of the equivariant
Bergman kernels. 




\begin{prop}\label{Pjprop}
For any $m\in\Z$, $p\in\N$,
the section
$P_p^{(m)}(\cdot,\cdot)
\in\cinf(X\times X,L^p\boxtimes (L^p)^*)$
defined for all $x,\,y\in X$
by the formula
\begin{equation}\label{Pjfla}
P_p^{(m)}(x,y):=\int_0^1\,e^{-2\pi\sqrt{-1} tm}
\varphi_{t,p}^{-1}P_p(\varphi_t(x),y)\,dt\,,
\end{equation}
coincides with the smooth Schwartz kernel with respect to $dv_X$ of
the orthogonal projection
$P_p^{(m)}:L^2(X,L^p)\to H^0_{(2)}(X,L^p)_m$
with respect to the $L^2$-product \cref{L2}
onto the weight space \cref{wghtspace} of weight $m\in\Z$.

Furthermore, for all $t\in\R$, $p\in\N$
and $x,\,y\in X$, it satisfies
\begin{equation}\label{Pmeq}
\varphi_{t,p}^{-1}P_p^{(m)}(\varphi_t(x),y)=
P_p^{(m)}(x,\varphi_t^{-1}(y))\varphi_{t,p}^{-1}=e^{2\pi\sqrt{-1} tm}
P_p^{(m)}(x,y).
\end{equation}
\end{prop}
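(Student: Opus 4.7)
The plan is to recognize $P_p^{(m)}$ as the composition of the Bergman projection $P_p$ with the Fourier-type averaging operator
\begin{equation*}
\Pi_m := \int_0^1 e^{-2\pi\i tm}\,\varphi_t^*\,dt
\end{equation*}
on $L^2(X,L^p)$, and then verify the defining properties of the orthogonal projection onto the weight space $H^0_{(2)}(X,L^p)_m$. First I would identify the kernel: for any $s\in L^2(X,L^p)$, the definition \cref{action} of the pullback $\varphi_t^*$ together with the integral representation \cref{ker} of $P_p$ gives
\begin{equation*}
(\Pi_m P_p s)(x)=\int_0^1 e^{-2\pi\i tm}\,\varphi_{t,p}^{-1}\int_X P_p(\varphi_t(x),y).s(y)\,dv_X(y)\,dt,
\end{equation*}
and Fubini -- justified by the off-diagonal decay from \cref{theta} -- exhibits \cref{Pjfla} as the Schwartz kernel of $\Pi_m P_p$.

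Next I would check that $\Pi_m P_p$ is the orthogonal projection onto $H^0_{(2)}(X,L^p)_m$. The $S^1$-invariance of $(J,\om,h^L,\nabla^L,dv_X)$ implies that $\varphi_t^*$ is a unitary operator on $L^2(X,L^p)$ commuting with $P_p$, and preserving holomorphicity. From this I deduce four points: (i) the image of $\Pi_m P_p$ lies in $H^0_{(2)}(X,L^p)$; (ii) the change of variable $t\mapsto t-u$ combined with $1$-periodicity of $\varphi_t$ shows that $\varphi_u^*(\Pi_m P_p s)=e^{2\pi\i um}\Pi_m P_p s$, so the image lands in the weight-$m$ subspace \cref{wghtspace}; (iii) for $s\in H^0_{(2)}(X,L^p)_m$ we have $P_p s=s$ and $\varphi_t^*s=e^{2\pi\i tm}s$, hence $\Pi_m P_p s=s$; (iv) self-adjointness follows from $(\varphi_t^*)^*=\varphi_{-t}^*$ together with $t\mapsto -t$ and periodicity. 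Together with $P_p^*=P_p$, these properties characterize the orthogonal projection onto $H^0_{(2)}(X,L^p)_m$.

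Finally I would derive the equivariance identities \cref{Pmeq}. The commutation $\varphi_t^*P_p=P_p\varphi_t^*$ translates into the pointwise identity
\begin{equation*}
\varphi_{t,p}^{-1}P_p(\varphi_t(x),y)=P_p(x,\varphi_t^{-1}(y))\,\varphi_{t,p}^{-1}
\end{equation*}
for the Schwartz kernels, and inserting this into \cref{Pjfla} yields the first equality in \cref{Pmeq}. The second equality comes from the change of variable $s\mapsto s+t$ in the integral defining $\varphi_{t,p}^{-1}P_p^{(m)}(\varphi_t(x),y)$, using the flow composition $\varphi_{t,p}^{-1}\varphi_{s,p}^{-1}=\varphi_{s+t,p}^{-1}$ to collapse the two pullback factors, pulling out $e^{2\pi\i tm}$, and then using $1$-periodicity to restore the integration interval to $[0,1]$. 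The computation is essentially mechanical once the bundle conventions are fixed; the only delicate point is in tracking which copy of $L^p$ each $\varphi_{t,p}^{\pm 1}$ acts on when transferring equivariance from the operator $P_p$ to its kernel $P_p(\cdot,\cdot)\in L^p\boxtimes (L^p)^*$, but this is bookkeeping rather than a real obstacle.
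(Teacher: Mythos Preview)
Your proposal is correct and follows essentially the same approach as the paper: define the projection as the Fourier average $\int_0^1 e^{-2\pi\i tm}\varphi_t^*P_p\,dt$, verify via the change of variable $t\mapsto t-u$ that its image lies in the weight-$m$ space and that it fixes that space, and deduce the equivariance \cref{Pmeq} from the same change of variable. The only cosmetic differences are that you spell out the idempotency/self-adjointness check more explicitly, and for the second equality in \cref{Pmeq} the paper instead invokes the Hermitian symmetry $P_p^{(m)}(x,y)=P_p^{(m)}(y,x)^*$ rather than the kernel identity for $\varphi_t^*P_p=P_p\varphi_t^*$; both routes are immediate.
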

\begin{proof}
For any $p\in\N$, $s\in H^0_{(2)}(X,L^p)$, $m\in\Z$
and $u\in\R$, using $\varphi_u\varphi_t=\varphi_{t+u}$
and the change of variable
$t\mapsto t-u$ for all $t\in S^1\simeq\R/\Z$, we compute
\begin{equation}\label{Pjflacomput}
\begin{split}
\varphi_{u}^*\left(\int_0^1\,e^{-2\pi\sqrt{-1} tm}
\varphi_{t}^*s\,dt\right)&=\int_0^1\,e^{-2\pi\sqrt{-1} tm}
\varphi_{t+u}^*s\,dt\\
&=e^{2\pi\sqrt{-1} um}\int_0^1\,e^{-2\pi\sqrt{-1} tm}
\varphi_{t}^*s\,dt\,.
\end{split}
\end{equation}
Together with the definition \cref{wghtspace} of the weight space
of weight $m\in\Z$, since the action of
$S^1$ on $L^2(X,L^p)$ is unitary
and recalling that
$P_p:L^2(X,L^p)\to H^0_{(2)}(X,L^p)$ denotes the orthogonal
projection, this shows that the map
\begin{equation}\label{Pmdef}
\begin{split}
P_p^{(m)}:L^2(X,L^p)&\longrightarrow H^0_{(2)}(X,L^p)_m\\
s&\longmapsto\int_0^1\,e^{-2\pi\sqrt{-1} tm}
\varphi_{t}^* P_ps\,dt
\end{split}
\end{equation}
is the orthogonal projection.
This implies formula \cref{Pjfla}
by \cref{Bergprop} introducing the Bergman kernel 
and definition \cref{action} of the action of $S^1$ on
sections, while formula \cref{Pmeq}
for the equivariance follows from the computation \cref{Pjflacomput}
in the same way,
together with the usual formula $P_p^{(m)}(x,y)=P_p^{(m)}(y,x)^*$
for all $x,\,y\in X$, holding
for Schwartz kernels of smoothing Hermitian operators.
\end{proof}


%


The Schwartz kernel defined in \cref{Pjprop} for all $p\in\N$
is called the \emph{equivariant Bergman kernel}
of weight $m\in\Z$. Recalling the fundamental vector field \cref{xidef} of the $S^1$-action,
the following result is adapted from \cite[Prop.\,4.1]{Ioo19}.

\begin{prop}\label{oscvener}
For any $k\in\N$ and $r\in\N$, there exists $C_k>0$ such that
for any $m\in\N$, $p\in\N$ and
$x,\,y\in X$ with $\mu(x)\neq m/p$,
we have
\begin{equation}\label{estoscvener}
\left|P_p^{(m)}(x,y)\right|_{\CC^r}
\leq \frac{C_k\,|\xi_x|^k}{\big|\mu(x)-\frac{m}{p}\big|^k}\,
p^{n+r-\frac{k}{2}}\,.
\end{equation}
\end{prop}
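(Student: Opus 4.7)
The plan is to exhibit $P_p^{(m)}(x,y)$ as an oscillatory integral in $t \in [0,1]$ with frequency $\lambda := p\mu(x)-m \neq 0$, and to extract the estimate by integrating by parts $k$ times in $t$. Combining \cref{alphaf} with the $S^1$-invariance of $\mu$ (so that $\mu(\varphi_t(x)) = \mu(x)$), the defining formula \cref{Pjfla} rewrites as
\begin{equation*}
P_p^{(m)}(x,y) = \int_0^1 e^{2\pi\i t\lambda}\,\Phi(t)\,dt, \qquad \Phi(t) := \tau_{t,p}\,P_p(\varphi_t(x),y),
\end{equation*}
so that the regime $\mu(x) \neq m/p$ corresponds precisely to the non-stationary regime of this oscillatory integral.

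The crucial observation enabling the integration by parts is that the boundary terms vanish at every step. Composing parallel transports along the full $S^1$-orbit together with the identity $\varphi_{1,p} = \Id$ yields via \cref{alphaf} the twisted periodicity $\Phi(t+1) = e^{-2\pi\i p\mu(x)}\Phi(t)$, which is inherited by all the $t$-derivatives of $\Phi$; combined with the factor $e^{2\pi\i\lambda}$ arising at $t = 1$, the boundary contribution at each step collapses to $e^{-2\pi\i m} - 1 = 0$. After $k$ integrations by parts this gives
\begin{equation*}
P_p^{(m)}(x,y) = \frac{(-1)^k}{(2\pi\i\lambda)^k}\int_0^1 e^{2\pi\i t\lambda}\,\Phi^{(k)}(t)\,dt,
\end{equation*}
and iterating \cref{tautp} identifies $\Phi^{(k)}(t) = \tau_{t,p}\,\bigl(\nabla^{L^p}_\xi\bigr)^k P_p(\varphi_t(x),y)$, where the $k$-fold covariant derivative in the direction of the fundamental vector field is applied in the first variable of the Bergman kernel.

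The estimate \cref{estoscvener} will then follow by applying the universal $\CC^r$-bound \cref{theta} to $\Phi^{(k)}(t)$: each of the $k$ covariant derivatives along $\xi$ contributes a factor of $|\xi_{\varphi_t(x)}| = |\xi_x|$, the equality holding by $S^1$-invariance of $g^{TX}$, together with a factor of $p^{1/2}$, while the extra derivatives required for the $\CC^r$-norm contribute the remaining power of $p^r$; combined with the prefactor $|\lambda|^{-k} = p^{-k}|\mu(x) - m/p|^{-k}$ this yields the announced bound $p^{n+r-k/2}$. The main obstacle I anticipate is the rigorous verification of the boundary cancellation at every IBP iteration, especially in combination with differentiation in the $x$ variable: the $x$-derivatives of the prefactor $\lambda^{-k}$ introduce additional negative powers of $\mu(x)-m/p$, so to keep the final denominator at $|\mu(x)-m/p|^k$ one has to perform $k+r$ rather than just $k$ integrations by parts and reassemble the factors only after the $\CC^r$-differentiation has been carried out.
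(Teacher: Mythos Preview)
Your approach is essentially the same as the paper's: both rewrite $P_p^{(m)}$ as an oscillatory integral via the Kostant formula \cref{alphaf} and integrate by parts $k$ times in $t$, then invoke the Bergman kernel bounds of \cref{theta}. The paper carries this out in local charts adapted to the $S^1$-flow (so that the $t$-derivative becomes a coordinate derivative of $P_{p,x_0}(t\xi_{x_0},Z)$, and \cref{asy} can be quoted alongside \cref{theta}), whereas you work globally and make explicit the boundary-term cancellation via the twisted periodicity $\Phi(t+1)=e^{-2\pi\i p\mu(x)}\Phi(t)$---a point the paper leaves implicit when it simply writes ``we can integrate by parts''.
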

\begin{proof}
For any $x\in X$ and $t_0\in\R$,
consider a chart around $x_0:=\varphi_{t_0}(x)\in X$
as in \cref{chart} such that the radial line
in $B^{T_{x_0}X}(0,\epsilon_0)$ generated by
the fundamental vector field $\xi_{x}\in T_xX$ defined
by \cref{xidef} is sent to the
path $u\mapsto\varphi_u(x_0)$ in the image $U_x\subset X$
of $\psi_x:B^{T_xX}(x,\epsilon_0)\to X$.
Then $L^p$ is identified with $L^p_{x_0}$ along this path
by the parallel transport $\tau_{t,p}$ introduced in \cref{tautp},
for all $p\in\N$ and
$|t|<\epsilon_0$. Thus for any $Z\in B^{T_{x_0}X}(0,\epsilon)$
and any $t\in\R$ small enough, in the notation \eqref{Kxdef}
we have
\begin{equation}
\tau_{t,p}P_{p}(\varphi_{t_0+t}(x),\psi_{x_0}(Z))=
P_{p,x_0}(t\xi_{x_0},Z)\,.
\end{equation}
Using also that $\tau_{t_0+t,p}=\tau_{t,p}\tau_{t_0,p}$,
we can thus apply
\cref{theta,asy} for all $t_0\in S^1\simeq\R/\Z$,
so that for any $r,\,k\in\N$, we get $C_k>0$
such that for any
$x,\,y\in X,\,t\in[0,1]$ and $p\in\N$, we have
\begin{equation}\label{majdtTau}
\left|\Dk{k}{t}\tau_{t,p}P_{p}(\varphi_t(x),y)
\right|_{\CC^r} \leq C_k\,p^{n+r+\frac{k}{2}}\,|\xi_x|^k\,.
\end{equation}
Then using the exponentiated form \cref{alphaf} of the Kostant formula,
we can integrate by parts
to get from \cref{Pjprop} that
for any $x,y\in X$ and $k\in\N$,
\begin{multline}\label{oscvenerfla}
P^{(m)}_p(x,y)=\int_0^1 \tau_{t,p}
P_{p}(\varphi_t(x),y) e^{2\pi\sqrt{-1} t(p\mu(x)-m)} dt\\
=\frac{1}{(2\pi\sqrt{-1} (p\mu(x)-m))^k}\int_0^1
\tau_{t,p}P_{p}(\varphi_t(x),y) \Dk{k}{t}
 e^{2\pi\sqrt{-1} t(p\mu(x)-m)} dt\\
=\left(\frac{\sqrt{-1}}{2\pi}\right)^k
\frac{p^{-k}}{(\mu(x)-\frac{m}{p})^k}\int_0^1 \Dk{k}{t}
\left(\tau_{t,p}P_{p}(\varphi_t(x),y)
\right)e^{2\pi\sqrt{-1} t(p\mu(x)-m)} dt\,.
\end{multline}
Together with \cref{majdtTau}, this establishes the result.
\end{proof}

\subsection{Asymptotic expansion of equivariant Bergman kernels}
\label{coordsec}

Recall the setting described at the beginning of this section, and let us
assume in addition
that $S^1$ acts freely on $\mu^{-1}(0)\subset X$.
This means in particular that the fundamental
vector field $\xi\in\cinf(X,TX)$ defined by \cref{xidef} nowhere vanishes over
$\mu^{-1}(0)\subset X$, and the definition \cref{gTX} of the
Kähler metric together with the moment map equation \cref{momentfla}
then imply
that for all $x\in\mu^{-1}(0)$, we have
\begin{equation}\label{dmuJxi=1}
d\mu.J\xi_x=-|\xi_x|^2\neq 0\,.
\end{equation}
This shows in particular that $0$ is a regular value
of $\mu\in\cinf(X,\R)$, so that $\mu^{-1}(0)\subset X$
is a smooth submanifold.
For any $x\in\mu^{-1}(0)$, we set
\begin{equation}\label{N}
N_x:=\{\eta\in T_xX~|~g^{TX}(\eta,\xi_x)=g^{TX}(\eta,J\xi_x)=0\}\,.
\end{equation}
Note from \cref{gTX} and \cref{momentfla} that
$J\xi_x\in T_xX$
is orthogonal to $T_x\mu^{-1}(0)\subset T_xX$, and
consider the unit orthogonal vectors in $T_xX$ defined by
\begin{equation}\label{e_1e_2def}
e_0:=\frac{\xi_x}{|\xi_x|}\in T_x\mu^{-1}(0)\quad\text{ and }
\quad e_1:=\frac{J\xi_x}{|\xi_x|}\in T_x\mu^{-1}(0)^{\perp}\,,
\end{equation}
so that we have an orthogonal decomposition of $(T_xX,g^{TX}_x)$ given by
\begin{equation}\label{THXdec}
T_xX=\R\,e_0\oplus\R\,e_1\oplus N_x\,.
\end{equation}
Fix now $x_0\in\mu^{-1}(0)$.
In order to build a chart around $x_0\in X$
adapted to this decomposition,
let $\epsilon_0>0$ be such that
the exponential map
$\exp^{\mu^{-1}(0)}_{x_0}:
B^{T_{x_0}\mu^{-1}(0)}(0,\epsilon_0)\to \mu^{-1}(0)$
of $(\mu^{-1}(0),g^{TX}|_{\mu^{-1}(0)})$
is injective, and write 
\begin{equation}\label{BNdef}
V_0:=\exp^{\mu^{-1}(0)}_{x_0}(B^{N_{x_0}}(0,\epsilon_0))
\subset \mu^{-1}(0)\,,
\end{equation}
where $N_{x_0}\subset T_{x_0}\mu^{-1}(0)$ by construction.
This defines a local section of the
quotient map
$\pi:\mu^{-1}(0)\to X_0:=\mu^{-1}(0)/S^1$
considered as an $S^1$-principal bundle.
Let now $\epsilon_0>0$ be so small that
for all $u\in I_0:=\,]-\epsilon_0,\epsilon_0[$ and
$x\in V_0$, the ODE
\begin{equation}\label{phidef}
\frac{d}{du}\phi_u(x)=-\frac{J\xi_{\phi_u(x)}}{|\xi_{\phi_u(x)}|^2}
\quad\text{with}\quad\phi_0(x)=x
\end{equation}
defines an embedding
$\phi:I_0\times V_0\xrightarrow{\sim} V\subset X$, which
by \cref{dmuJxi=1} satisfies  $\mu(\phi_u(x))=u$ for all
$u\in I_0$ and $x\in V_0$. Since
formula \cref{phidef} is $S^1$-invariant in the sense
that for all $t\in S_1\simeq\R/Z$, $u\in\in I_0$ and
$x\in V_0$, we have
$\phi_u(\varphi_t(x))=\varphi_t(\phi_u(x))$,
we get a natural $S^1$-equivariant open
embedding
\begin{equation}\label{eqemb}
\begin{split}
\Phi:S^1\times I_0\times V_0&\xrightarrow{~\sim~}U\subset X\\
(t\,,\,u\,,\,x)&\longmapsto\varphi_t(\phi_u(x))\,.
\end{split}
\end{equation}
Using a partition of unity, we can then consider a
bounded family of charts
$\psi:B^{TX}(0,\epsilon_0)\to X$ in the sense of
\cref{chart} such that for any $x_0\in\mu^{-1}(0)$,
its restriction
$\psi_{x_0}:B^{T_{x_0}X}(0,\epsilon_0)\xrightarrow{\sim} U_{x_0}\subset X$
is defined for any $t,\,u\in\R$ and $\eta\in N_{x_0}$
satisfying $|te_1+ue_1+\eta|<\epsilon_0$
in the decomposition \cref{THXdec} by
\begin{equation}\label{psidef}
\psi_{x_0}(t e_0+u e_1+\eta)=
\Phi\left(\frac{t}{|\xi_{x_0}|}\,,\,-|\xi_{x_0}|\,u\,,\,
\exp^{\mu^{-1}(0)}_{x_0}(\eta)\right)\,.
\end{equation}
Note that we have in fact $d\psi_{x_0}=\Id_{T_{x_0}X}$
by \cref{e_1e_2def} and \cref{phidef}.
For any $x\in\mu^{-1}(0)$, we consider the \emph{horizontal tangent bundle}
\begin{equation}\label{THX}
\begin{split}
T_x^HX&:=\{\eta\in T_xX~|~g^{TX}(\eta,\xi_x)=0\}\\
&=\R e_1\oplus N_x\subset T_x X
\end{split}
\end{equation}
and for any $Z\in T_x^HX$,
%
we write 
\begin{equation}\label{Zperp}
Z=u\,e_1+Z^\perp\in\R\,e_1\oplus N_x\,,
\end{equation}
for its decomposition induced by \cref{THXdec},
with $u\in\R$ and $Z^\perp \in N_x$.
In the same way, for any $Z'\in T_x^HX$, we write 
$Z'=u'\,e_1+{Z'}^\perp$ with $u'\in\R$ and ${Z'}^\perp \in N_x$.
The following result establishes the
asymptotic expansion for the
equivariant Bergman kernels introduced in \cref{Pjprop}.

\begin{theorem}\label{wghtedasyth}
There is a family
$\{Q_{r,x}\in\C[Y,Z,Z']\}_{r\in\N}$ of
polynomials in $Z,Z'\in T_x^HX$ and $Y\in\R$,
depending smoothly on $x\in\mu^{-1}(0)$, of the same total
parity as $r\in\N$ and of total degree less than $3r\in\N$,
such that for any compact set $K\subset\mu^{-1}(0)$,
any $k,\,j\in\N$ and
$\delta>0$, there is $\epsilon_0>0$ and $C>0$
such that for all $\epsilon\in\,]0,\epsilon_0[$,
$x\in K$, $p\in\N$, $m\in\Z$
and $Z,\,Z'\in T_x^HX$ satisfying $|Z|,\,|Z'|<\epsilon\,p^{\frac{-1+\epsilon}{2}}$,
we have
\begin{equation}\label{wghtedasyfla}
\left|p^{-n+\frac{1}{2}}P_{p,x}^{(m)}(Z,Z')
-\sum_{r=0}^{k-1} Q_{r,x}\left(\frac{m}{\sqrt{p}},\sqrt{p}Z,\sqrt{p}Z'\right)\PP_x^{(m)}(\sqrt{p}Z,\sqrt{p}Z')p^{-\frac{r}{2}}\right|_{\CC^j(K)}\leq Cp^{-\frac{k}{2}+\delta},
\end{equation}
where for any $Z,\,Z'\in T_x^HX$ as in \cref{Zperp}, we have
\begin{multline}\label{wghtedlocmod}
\PP_x^{(m)}(Z,Z')
=
\frac{\sqrt{2}}{|\xi_x|}\exp\left(-2\pi\left(\frac{u+u'}{2}+\frac{m}{|\xi_x|\sqrt{p}}\right)^2\right)\\
\exp\left(-\frac{\pi}{2}(u-u')^2\right)\,\PP_x(Z^\perp,{Z'}^\perp)\,.
\end{multline}
Furthermore, for all $x\in M$,
we have $Q_{0,x}\equiv 1$.
%
\end{theorem}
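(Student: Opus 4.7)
The plan is to substitute the near-diagonal expansion \cref{asyfla} for $P_p$ directly into the definition \cref{Pjfla} of $P_p^{(m)}$ and carry out the resulting oscillatory $t$-integral as a Gaussian after rescaling. Fix $x_0\in K\subset\mu^{-1}(0)$ and work in the bounded chart $\psi_{x_0}$ of \cref{psidef} together with the trivialization of $L^p$ by radial parallel transport from $x_0$. Since $\psi_{x_0}$ is built from the equivariant embedding $\Phi$ of \cref{eqemb}, the $S^1$-action is exact translation $\varphi_t(\psi_{x_0}(Z))=\psi_{x_0}(Z+t|\xi_{x_0}|e_0)$ in the chart, and $\mu(\psi_{x_0}(Z))=-|\xi_{x_0}|u$ for $Z=ue_1+Z^\perp\in T^H_{x_0}X$.

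First, the off-diagonal decay of \cref{theta} and the uniform lower bound of $|\xi|$ on $K$ let me localize the $t$-integral in \cref{Pjfla} to $|t|<\epsilon_0$ at cost $O(e^{-c\sqrt{p}})$. Using the exponentiated Kostant formula \cref{alphaf}, the integrand becomes $e^{2\pi\sqrt{-1}tp\mu(x)}\tau_{t,p}P_p(\varphi_t(x),y)$. The key geometric step is to express $\tau_{t,p}$ in the $x_0$-trivialization: since $\tau_{t,p}$ transports along the $S^1$-orbit through $\psi_{x_0}(Z)$ while the trivialization uses the $\psi_{x_0}$-images of radial lines from the origin, the two transports differ by the $U(1)$-holonomy of $L^p$ around the triangle with vertices $0,\,Z+t|\xi_{x_0}|e_0,\,Z$ in the chart. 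By Stokes' theorem together with the prequantization formula \cref{preq}, this holonomy equals $\exp(2\pi\sqrt{-1}p\int_\Delta\omega)$; approximating $\omega\approx\omega_{x_0}$ and using $\omega_{x_0}(e_0,e_1)=1$, the leading factor $e^{\pi\sqrt{-1}pt|\xi_{x_0}|u}$ partially cancels the Kostant phase $e^{-2\pi\sqrt{-1}pt|\xi_{x_0}|u}$ and leaves the net factor $e^{-\pi\sqrt{-1}pt|\xi_{x_0}|u}$, while the subleading corrections $\omega-\omega_{x_0}=O(|Z|^2)$ will feed into the higher-order terms $Q_{r,x_0}$.

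After the rescaling $s=t|\xi_{x_0}|\sqrt{p}$, substituting \cref{asyfla} for $P_{p,x_0}(Z+(s/\sqrt{p})e_0,Z')$ and using the factorization
\[
\PP_{x_0}(\sqrt{p}Z+se_0,\sqrt{p}Z')=\exp\left(-\tfrac{\pi s^2}{2}-\pi\sqrt{-1}s\sqrt{p}u'\right)\PP_{x_0}(\sqrt{p}Z,\sqrt{p}Z')
\]
(valid since $Z,Z'$ are horizontal and $\omega_{x_0}(e_0,e_1)=1$), the $r=0$ term with $J_{0,x_0}\equiv 1$ reduces to the standard Gaussian $\int e^{-\pi s^2/2-\pi\sqrt{-1}sB}ds=\sqrt{2}e^{-\pi B^2/2}$ with $B=\sqrt{p}(u+u')+2m/(|\xi_{x_0}|\sqrt{p})$. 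A direct computation identifies this with $\PP^{(m)}_{x_0}(\sqrt{p}Z,\sqrt{p}Z')$ from \cref{wghtedlocmod}, giving $Q_{0,x_0}\equiv 1$. Higher-order $r\geq 1$ contribute via the polynomial factors $J_{r,x_0}(\sqrt{p}Z+se_0,\sqrt{p}Z')$ and the Taylor expansion of the holonomy correction; integrating these against the Gaussian in $s$ produces the polynomials $Q_{r,x_0}(m/\sqrt{p},\sqrt{p}Z,\sqrt{p}Z')$, smooth in $x_0\in K$.

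The main obstacle will be the systematic bookkeeping required to combine the holonomy corrections with the higher-order $J_{r,x_0}$ contributions and verify uniformly in $x_0\in K$ that the total output at each order $p^{-r/2}$ is polynomial in $(m/\sqrt{p},\sqrt{p}Z,\sqrt{p}Z')$ with the stated parity and degree bound $\leq 3r$. This is analogous to the formal power series arguments in the proof of \cite[Th.\,4.18']{DLM06}, and should follow from the parity structure of $J_{r,x_0}$ together with standard Hermite-type identities for Gaussian integrals of monomials in $s$.
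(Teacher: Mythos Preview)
Your proposal is correct and follows essentially the same route as the paper's proof: localize the $t$-integral in \cref{Pjfla}, substitute the near-diagonal expansion \cref{asyfla} for $P_p$, extract the phase from $\varphi_{t,p}^{-1}$ in the radial trivialization, and evaluate the resulting Gaussian in the rescaled variable $s=t|\xi_{x_0}|\sqrt{p}$. The only difference is that the paper computes the phase of $\tau_{t,p}$ directly from the standard formula $\nabla^L_\xi=d+\tfrac{1}{2}R^L(Z,\xi_x)+O(|Z|^2)$ in the radial gauge (see \cite[(1.2.31)]{MM07}), whereas you obtain it via the Stokes-type holonomy over the triangle $0,\,Z+t|\xi_{x_0}|e_0,\,Z$; both yield the same net factor $e^{-\pi\sqrt{-1}pt|\xi_{x_0}|u}$, and the paper treats the higher-order bookkeeping with the same brevity you anticipate.
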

\begin{proof}
Let $x\in\mu^{-1}(0)$, and consider the
trivialization of $L$ by parallel transport
along radial lines in the charts
defined by \cref{psidef}.
Then for any $Z\in B^{T^H_xX}(0,\epsilon_0)$
as in \cref{Zperp},
by a standard computation
which can be found for example in \cite[(1.2.31)]{MM07}
and which holds in any trivialization of $L$ along radial lines,
the connection $\nabla^L$ at
$\psi_x(Z)\in X$ has the form
\begin{equation}\label{nablatriv}
\begin{split}
\nabla^L_\xi &=d+\frac{1}{2}R^L(Z,\xi_x)+O(|Z|^2)\\
&=d-\sqrt{-1}\pi \om_x(Z,\xi_x)+O(|Z|^2)\\
&=d+\sqrt{-1}\pi |\xi_x|\,u+O(|Z|^2)\,,
\end{split}
\end{equation}
where we used formulas \cref{preq} and \cref{gTX} for $\om$ and $g^{TX}$ to
get the first line, and formulas \cref{e_1e_2def} 
and \cref{Zperp} for the second. On the other hand,
by \eqref{dmuJxi=1} and \cref{phidef}, 
we have $\mu(\psi_x(Z))=-|\xi_x|\,u$ in the charts
defined by \cref{psidef}.
Hence by definition \cref{tautp} of the parallel transport
with respect to $\nabla^{L^p}_\xi$ and by the form \cref{nablatriv}
of $\nabla^L$ at $Z\in B^{T^H_xX}(0,\epsilon_0)$, formula \cref{alphaf}
reads as
\begin{equation}\label{varphitpasy}
\begin{split}
\varphi_{t,p}^{-1}
&=\exp(2\pi\i t p \mu(\psi_x(Z)))\exp(\sqrt{-1}\pi tp (|\xi_x|\,u+O(|Z|^2)))\\
&=\exp(-\i\pi tp (|\xi_x|\,u+O(|Z|^2))\,.
\end{split}
\end{equation}
Let us now choose $\epsilon>0$ so small
that for any $Z\in B^{T_x^HX}(0,\epsilon)$ and
$t\in\,]-\epsilon,\epsilon[$, we have
$|te_0+Z|<\epsilon_0$. Then in the charts
defined by \cref{psidef}, we get
\begin{equation}\label{varphitZexp}
\varphi_t(Z)=|\xi_x|\,t\,e_0+Z\,.
\end{equation}
Hence by definition of the local model \cref{PPreal},
we have the following
asymptotic expansion as $p\to\infty$,
uniform in $t\in\,]-\eptt,\eptt[$ and in
$Z,\,Z'\in B^{T_x^HX}(0,\eptt)$ as in \cref{Zperp},
\begin{equation}\label{PPvarphitZZ'}
\begin{split}
&\PP_x(\sqrt{p}\,\varphi_{t/\sqrt{p}}(Z/\sqrt{p}),Z')\\
&=
\exp\left(-\frac{\pi}{2}|\xi_x|^2t^2-\frac{\pi}{2}|Z-Z'|^2
-\pi\i t \om_x(\xi_x,Z')-\pi\i t \om_x(Z,Z')\right)\exp\left(O(p^{\epsilon-\frac{1}{2}})\right)\\
&=\exp\left(-\frac{\pi}{2}|\xi_x|^2t^2-\pi\i |\xi_x| t u'\right)
\exp(-\frac{\pi}{2}(u-u')^2)\PP_x(Z^\perp,{Z'}^\perp)+O(p^{\epsilon-\frac{1}{2}})\,.
\end{split}
\end{equation}
Then by \cref{theta,thetagal,Pjprop,asy},
using the classical formula for the Fourier transform
of the Gaussian and the change of variables
$(t,Z,Z')\mapsto( t/\sqrt{p},Z/\sqrt{p},Z'/\sqrt{p})$,
for any
$\delta>0$, we get $\epsilon>0$ such that
we have the following asymptotic expansion as $p\to\infty$, uniform in $m\in\Z$, in $\epsilon\in\,]0,\epsilon_0[$ and
in $Z,\,Z'\in B^{T_x^HX}(0,\ept)$ as in \cref{Zperp},
\begin{equation}\label{Pjcomput}
\begin{split}
&p^{-n+\frac{1}{2}}\,P_{p,x}^{(m)}(Z/\sqrt{p},Z'/\sqrt{p})\\
&=p^{-n+\frac{1}{2}}\int_{-\ept}^{\ept}\,e^{-2\pi\sqrt{-1} tm}
\varphi_{t,p}^{-1}P_{p,x}(\varphi_t(Z/\sqrt{p}),Z'/\sqrt{p})\,dt
+O(p^{-\infty})\\
&=\int_{-\eptt}^{\eptt}\,e^{-2\pi\sqrt{-1} t\frac{m}{\sqrt{p}}}
\exp(-\i\pi t |\xi_x|\,u)\PP_x(\sqrt{p}\,\varphi_{t/\sqrt{p}}(Z/\sqrt{p}),Z')\,dt+O(p^{\delta-\frac{1}{2}})\\
&=\left(\int_{-\infty}^{\infty}\,e^{-\i\pi t|\xi_x|\,(u+u')-2\i\pi t\frac{m}{\sqrt{p}}-\frac{\pi}{2}|\xi_x|^2t^2}\,dt\right)
\exp(-\frac{\pi}{2}(u-u')^2)\PP_x(Z^\perp,{Z'}^\perp)+O(p^{\delta-\frac{1}{2}})\\
&=\frac{\sqrt{2}}{|\xi_x|}\exp\left(-2\pi\left(\frac{(u+u')}{2}+\frac{m}{|\xi_x|\sqrt{p}}\right)^2\right)
\exp(-\frac{\pi}{2}(u-u')^2)\PP_x(Z^\perp,{Z'}^\perp)+O(p^{\delta-\frac{1}{2}})\,.
\end{split}
\end{equation}
This establishes the first order of the
asymptotics expansion \cref{wghtedasyfla}. To get the next order, one
simply needs to consider the next order of the Taylor expansions
performed in formulas \cref{nablatriv,varphitpasy,varphitZexp,PPvarphitZZ'},
then realize that all integrations performed in the computation \cref{Pjcomput}
extend to the case when the exponential
terms are multiplied by polynomials
in $(\sqrt{p}t,\sqrt{p}Z,\sqrt{p}Z')$, leading to an asymptotic expansion of
the form \cref{wghtedasyfla}.
This proves the result.
\end{proof}

\begin{rmk}
\cref{wghtedasyth} gives the asymptotic expansion as $p\to\infty$
of the equivariant Bergman kernel with respect to each
weight $m\in\Z$ in a neighborhood
of $\mu^{-1}(0)$ and in
the directions orthogonal to the $S^1$-action, since it only holds for
$Z,\,Z'\in T^{H}X$ as in \cref{THX}.
Together with \cref{Pjprop} prescribing its
behavior in the direction of the $S^1$-action and \cref{oscvener}
giving its asymptotic decay outside the same neighborhood of $\mu^{-1}(0)$,
\cref{wghtedasyth} thus computes the full asymptotic expansion of the
equivariant Bergman kernels as $p\to\infty$, uniformly in
the weight $m\in\Z$.
In the special case $m=0$,
\cref{wghtedasyth} is a consequence of the
result of Ma and Zhang in \cite[Th.\,0.2]{MZ08}.
\end{rmk}

\subsection{Asymptotic expansion of the partial Bergman kernel}
\label{partBergsec}

Recall the setting described at the beginning of the section.
The following Proposition serves as a definition
of the partial Bergman kernel, and gives its first
fundamental asymptotic properties.

\begin{theorem}\label{faroffdiagprop}
For any $p\in\N$, the orthogonal
projection to the space $\HH_p\subset L^2(X,L^p)$
defined by \cref{Hpdef} admits a smooth Schwartz kernel
$P_p^{(-)}(\cdot,\cdot)\in\cinf(X\times X,L^p \boxtimes (L^p)^*)$, and
%
%
for any $r,\,k\in\N$ and $\alpha>0$,
there is $C_k>0$ such that for all
$\epsilon\in\,]0,\alpha/2[$, $p\in\N$ and $x,\,y\in X$,
the following estimates holds
\begin{equation}\label{P-sum1}
\begin{split}
\left|P_{p}^{(-)}(x,y)-\sum_{m\geq-\alpha p^{\frac{1+\alpha}{2}}}^{m=0}
\,P_p^{(m)}(x,y)\right|_{\CC^r}\leq C_k\,p^{-k}\,|\xi_x|^k &
\quad\text{if}\quad|\mu(x)|\leq\epsilon p^{\frac{-1+\epsilon}{2}}\,,\\
|P_p^{(-)}(x,y)|_{\CC^r}\leq C_k\,p^{-k}\,|\xi_x|^k &
\quad\text{if}\quad\mu(x)>\alpha p^{\frac{-1+\alpha}{2}}\,,\\
|P_p^{(-)}(x,y)-P_p(x,y)|_{\CC^r}\leq C_k\,p^{-k}\,|\xi_x|^k &
\quad\text{if}\quad\mu(x)<-\alpha p^{\frac{-1+\alpha}{2}}\,,
\end{split}
\end{equation}
and $|P_p^{(-)}(x,y)|_{\CC^r}\leq C_k\,p^{-k}\,|\xi_x|^k$
if $d^X(x,y)>\alpha p^{\frac{-1+\alpha}{2}}$.
\end{theorem}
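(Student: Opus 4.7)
The plan is to construct the Schwartz kernel of $P_p^{(-)}$ explicitly as the series $\sum_{m\leq 0} P_p^{(m)}$ and to derive all four estimates of \cref{P-sum1} from the oscillation bound of \cref{oscvener}, combined with the off-diagonal decay of \cref{theta} for the last one. The first step is to note that, by the finite-dimensionality of $\HH_p$ established in \cref{QR=0}, the projection $P_p^{(-)}$ is of finite rank for $p$ large enough, hence admits a smooth Schwartz kernel; equivalently, by \cref{Pjprop} applied termwise,
\[
P_p^{(-)}(x,y)=\sum_{m\leq 0} P_p^{(m)}(x,y),
\]
with convergence in every $C^r$-norm following from the bounds used below.

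The three estimates of \cref{P-sum1} all proceed by the same mechanism: the hypothesis on $\mu(x)$ together with the relevant range of weights provides a uniform lower bound on $|\mu(x)-m/p|$, so that \cref{oscvener} gives a decay estimate on $|P_p^{(m)}(x,y)|_{\CC^r}$ that is summable in $m$. For the first estimate, the difference is the tail $\sum_{m<-\alpha p^{(1+\alpha)/2}} P_p^{(m)}(x,y)$; the constraint $\epsilon<\alpha/2$ ensures $|\mu(x)-m/p|\geq |m|/(2p)$ in that range, so that \cref{oscvener} with a large integer parameter $K$ bounds each term by $C_K |\xi_x|^K |m|^{-K} p^{n+r+K/2}$, yielding a summed tail bounded by $C_K |\xi_x|^K p^{n+r+1/2-(K-1)\alpha/2}$, which is $\leq C p^{-k}|\xi_x|^k$ once $K$ is taken large. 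The second estimate uses $|\mu(x)-m/p|\geq \alpha p^{(-1+\alpha)/2}+|m|/p$ for all $m\leq 0$ when $\mu(x)>\alpha p^{(-1+\alpha)/2}$, and the same summation produces the required bound; the third follows identically from the identity $P_p-P_p^{(-)}=\sum_{m>0} P_p^{(m)}$ combined with $|\mu(x)-m/p|\geq \alpha p^{(-1+\alpha)/2}+m/p$.

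The off-diagonal estimate is then obtained by case analysis on $\mu(x)$. When $\mu(x)>\alpha p^{(-1+\alpha)/2}$, the conclusion is estimate (2); when $\mu(x)<-\alpha p^{(-1+\alpha)/2}$, it follows from estimate (3) together with the off-diagonal decay of \cref{theta} applied to $P_p(x,y)$. When instead $|\mu(x)|\leq \alpha p^{(-1+\alpha)/2}$, one first applies estimate (1) with, say, $\epsilon=\alpha/3$ and a suitable $\alpha'>2\alpha$ to reduce $P_p^{(-)}(x,y)$ modulo $O(p^{-\infty})$ to a finite sum over $-\alpha' p^{(1+\alpha')/2}\leq m\leq 0$. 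Each term in this sum is controlled by sharpening the integration-by-parts argument of \cref{oscvener}: the bound $|\Dk{K}{t}[\tau_{t,p} P_p(\varphi_t(x),y)]|\leq C_K p^{n+r+K/2}|\xi_x|^K$ is replaced by the refined version $C_K p^{n+r+K/2}|\xi_x|^K e^{-c\sqrt{p}d^X(\varphi_t(x),y)}$ coming from \cref{theta}, using the $S^1$-invariance of $g^{TX}$ so that $|\xi_{\varphi_t(x)}|=|\xi_x|$ and the geodesic estimate $d^X(\varphi_t(x),y)\geq d^X(x,y)-|\xi_x|$. In the regime $|\xi_x|\leq d^X(x,y)/2$, the orbit of $x$ stays at distance at least $\alpha p^{(-1+\alpha)/2}/2$ from $y$, producing an $O(p^{-\infty})$ contribution uniformly in $m$; in the complementary regime $|\xi_x|>d^X(x,y)/2$, the required bound is absorbed by the factor $|\xi_x|^k$ present in the statement.

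The main obstacle lies in this final subcase, where $x$ is near $\mu^{-1}(0)$ and the $S^1$-orbit of $x$ can approach $y$: there the interplay between the oscillation in $m$ and the geometric decay along the orbit must be balanced carefully, and the factor $|\xi_x|^k$ in \cref{faroffdiagprop} is essential to accommodate contributions from points where the action is nearly trivial.
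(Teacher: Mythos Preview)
Your argument for the three estimates in \cref{P-sum1} is essentially the paper's: both bound tails of $\sum_m P_p^{(m)}$ termwise via \cref{oscvener} and then sum. One small discrepancy: you establish smoothness of the kernel by invoking \cref{QR=0}, but that proposition requires the polynomial-growth hypothesis of \cref{volgrowth}, which is not assumed in \cref{faroffdiagprop} (it only enters in \cref{DPPsec}); the paper instead observes that $P_p=\sum_{m\in\Z}P_p^{(m)}$ converges in every local $\CC^r$-norm by standard Fourier theory applied to \cref{Pjfla}, whence the subsum over $m\leq 0$ does as well.

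The off-diagonal estimate is another matter. The paper's own proof simply does not address it, and your proposed argument has a genuine gap in the final subcase $|\mu(x)|\leq\alpha p^{(-1+\alpha)/2}$ with $|\xi_x|>d^X(x,y)/2$: saying the bound is ``absorbed by the factor $|\xi_x|^k$'' is not an argument, and in fact the estimate as literally stated appears to fail there. Take $x\in\mu^{-1}(0)$ and $y=\varphi_{1/2}(x)$: then $d^X(x,y)$ and $|\xi_x|$ are fixed positive numbers, while by \cref{Pmeq} one has
\[
\big|P_p^{(-)}(x,y)\big|=\Big|\sum_{j\geq 0}(-1)^j P_p^{(-j)}(x,x)\Big|,
\]
which by \cref{wghtedasyth} at $Z=Z'=0$ behaves like $p^{n-\frac12}\cdot\frac{1}{\sqrt{2}\,|\xi_x|}$ (the alternating Gaussian sum tends to $\tfrac12$), not $O(p^{-k})$. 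The natural correct formulation replaces $d^X(x,y)$ by the distance from $y$ to the orbit $S^1\cdot x$; your refinement of \cref{oscvener} via \cref{theta} in the subcase $|\xi_x|\leq d^X(x,y)/2$ is in effect a proof of that corrected version.
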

\begin{proof}
For any $p\in\N$ and $m\in\Z$,
recall the orthogonal projection \cref{Pmdef}
on the weight space \cref{wghtspace} of weight
$m\in\Z$, and note from \cref{Hpdef}
that the map
\begin{equation}\label{P-=sumPm}
P_p^{(-)}:=\sum_{m\leq 0} P_p^{(m)}:L^2(X,L^p)
\longrightarrow\HH_p
\end{equation}
converges to the orthogonal projection onto
the space \cref{Hpdef} in the space
of bounded operators acting on $L^2(X,L^p)$.
In the same way, by the weight decomposition \cref{decwghtspace},
the orthogonal
projection $P_p:L^2(X,L^p)\to H^0_{(2)}(X,L^p)$
satisfies $P_{p}=\sum_{m\in\Z}\,P_p^{(m)}$, where the
sum converges not only in the space
of bounded operators acting on $L^2(X,L^p)$, but
also in local $\CC^r$-norms
of their Schwartz kernels for each $r\in\N$
by \cref{Pjprop} and standard Fourier theory.
This shows in particular that the sum \eqref{P-=sumPm}
converges in local $\CC^r$-norms for each $r\in\N$,
so that \eqref{P-=sumPm} admits a smooth
Schwartz kernel
$P_p^{(-)}(\cdot,\cdot)\in\cinf(X\times X,L^p \boxtimes (L^p)^*)$,
and we have the following smooth convergence
for all $x,\,y\in X$,
\begin{equation}\label{kerP-=sumPm}
P_{p}^{(-)}(x,y)=\sum_{m\in\Z}\,P_p^{(m)}(x,y)\,.
\end{equation}
Using \cref{oscvener}, for any
$k,\,r\in\N$ and $\alpha>0$, we get
a constant $C_k>0$ such that for any
$\epsilon\in\,]0,\alpha/2[$,
$p\in\N$ and $x,\,y\in X$ such that
$|\mu(x)|\leq\epsilon p^{\frac{-1+\epsilon}{2}}$,
we have
\begin{equation}\label{faroffdiagcomput}
\begin{split}
\sum_{m<-\alpha p^{\frac{1+\alpha}{2}}}
\left|P_p^{(m)}(x,y)\right|_{\CC^r}&\leq
C_k\,p^{n+r-\frac{k}{2}}\,|\xi_x|^k
\sum_{m< -\alpha p^{\frac{1+\alpha}{2}}}\,\left|\mu(x)-\frac{m}{p}\,\right|^{-k}\\
&\leq
C_k p^{n+r-\frac{k}{2}}\,|\xi_x|^k
\sum_{m=1}^{\infty}\,\left|\big(\mu(x)-2\alpha p^{\frac{-1+\alpha}{2}}\big)+\frac{m}{p}\,\right|^{-k}\\
&\leq
C_k \alpha^{-k} p^{-\alpha\frac{k}{2}}
p^{n+r}\,|\xi_x|^k
\sum_{m=1}^\infty\,\left(\frac{m}{\epsilon p^{\frac{1+\alpha}{2}}}+1\right)^{-k}\\
&\leq
C_k \alpha^{1-k}
p^{n+r+\frac{1}{2}}p^{-\alpha\frac{k-1}{2}}\,|\xi_x|^k
\int_{0}^{\infty}\,(t+1)^{-k}\,dt\,.
\end{split}
\end{equation}
Since the integral in the last line of \cref{faroffdiagcomput}
converges,
this shows that the sum in the left-hand side converges as
well and proves the first estimate of \cref{P-sum1} by
formula \eqref{kerP-=sumPm} and taking $k\in\N$ large enough.


Letting now $x,\,y\in X$ satisfy
$\mu(x)>\alpha p^{\frac{-1+\alpha}{2}}$,
a strictly analogous computation
allows to directly estimate the sum
$\sum_{m\leq 0} |P_p^{(m)}(x,y)|_{\CC^r}$ for all $r\in\N$ in the
same way, establishing the second estimate of \cref{P-sum1}
by formula \eqref{kerP-=sumPm}.

%

Finally, if $x,\,y\in X$ satisfy
$\mu(x)<-\alpha p^{\frac{-1+\alpha}{2}}$,
a computation stricly analogous to \cref{faroffdiagcomput}
allows to estimate the sum
$\sum_{m=1}^{\infty} |P_p^{(m)}(x,y)|_{\CC^r}$
for all $r\in\N$ in the same way,
giving the third estimate of \cref{P-sum1}
thanks to the identity
$P_p-P_p^{(-)}=\sum_{m=1}^{\infty} P_p^{(m)}$ following
from \eqref{P-=sumPm} as above. This concludes the proof.

%
\end{proof}


The Schwartz kernel defined in \cref{faroffdiagprop}
for all $p\in\N$ is called the \emph{partial Bergman kernel}.
Recalling the bounded family of charts
constructed in \cref{coordsec},
the main goal of this section is to establish the
following result on the near-diagonal expansion of the
partial Bergman kernel.

\begin{theorem}\label{partBergasyth}
There are two families
$\{Q_{r,x}^{(-)}\in\C[Z,Z']\}_{r\in\N}$ 
and $\{Q_{r,x}^{(0)}\in\C[Z,Z']\}_{r\in\N}$ of
polynomials in $Z,\,Z'\in T_x^HX$ depending smoothly on
$x\in\mu^{-1}(0)$, of the same total
parity as $r\in\N$ and of degree less that $3r\in\N$,
such that for any compact set $K\subset\mu^{-1}(0)$,
any $k,\,j\in\N$ and
$\delta>0$, there is $\epsilon_0>0$ and $C>0$
such that for any $\epsilon\in\,]0,\epsilon_0[$,
$x\in K$, $p\in\N$
and
$Z,\,Z'\in T_x^HX$
satisfying $|Z|,\,|Z'|<\ept$,
we have
\begin{multline}\label{partBergasyfla}
\Big|p^{-n}P_{p,x}^{(-)}(Z,Z')
-\sum_{r=0}^{k-1} Q_{r,x}^{(-)}(\sqrt{p}Z,\sqrt{p}Z')\PP_x^{(-)}(\sqrt{p}Z,\sqrt{p}Z')p^{-\frac{r}{2}}\\
\left.- p^{-\frac{1}{2}}\,
\sum\limits_{r=0}^{k-2}Q_{r,x}^{(0)}(\sqrt{p}Z,\sqrt{p}Z')
\PP_x^{(0)}(\sqrt{p}Z,\sqrt{p}Z')p^{-\frac{r}{2}}
\right|_{\CC^{j}(K)}\leq Cp^{-\frac{k}{2}+\delta}.
\end{multline}
where $\PP_x^{(0)}$ is the local model \cref{wghtedlocmod}
for $m=0$ and where for any $Z,\,Z'\in T_x^HX$
 as in \cref{Zperp}, we have
\begin{equation}\label{partBerglocmod}
\PP_x^{(-)}(Z,Z')
=\sqrt{2}\left(\int_{-\infty}^{\frac{u+u'}{2}}
e^{-2\pi t^2}\,dt\right)
\exp\left(-\frac{\pi}{2}(u-u')^2\right)\,\PP_x(Z^\perp,{Z'}^\perp)\,.
\end{equation}
Furthermore, for all $x\in\mu^{-1}(0)$ and $Z,\,Z'\in T_x^HX$, we have
$Q^{(-)}_{0,x}\equiv 1$.

\end{theorem}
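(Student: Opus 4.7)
The plan is to combine the decomposition $P_p^{(-)}=\sum_{m\leq 0}P_p^{(m)}$ with the equivariant asymptotics of \cref{wghtedasyth}, truncating the sum via \cref{faroffdiagprop} and then converting the resulting sum over $m$ into an integral via Euler--Maclaurin. For $x\in K\subset\mu^{-1}(0)$ and $Z,Z'\in T_x^HX$ with $|Z|,|Z'|<\ept$, formula \cref{psidef} yields $\mu(\psi_x(Z))=-|\xi_x|u$ with $|u|\leq|Z|$, so the first estimate of \cref{faroffdiagprop} applies. Fixing $\alpha>0$ and setting $M_p:=\lceil\alpha p^{(1+\alpha)/2}\rceil$, it gives
\[
P_{p,x}^{(-)}(Z,Z')=\sum_{m=-M_p}^{0}P_{p,x}^{(m)}(Z,Z')+O(p^{-\infty})
\]
in $\CC^j(K)$-norm (using that $|\xi_x|$ is bounded on $K$ by freeness of the $S^1$-action). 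Applying \cref{wghtedasyth} with order $k':=k+O(1)$ (chosen large enough that the accumulated error from $M_p$ summands stays within $O(p^{-k/2+\delta})$), the problem reduces to evaluating, for each $0\leq r\leq k'-1$,
\[
S_{r,p}(Z,Z'):=p^{-1/2}\sum_{m=-M_p}^{0}Q_{r,x}(m/\sqrt{p},\sqrt{p}Z,\sqrt{p}Z')\,\PP_x^{(m)}(\sqrt{p}Z,\sqrt{p}Z').
\]

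Set $U:=\sqrt{p}\,u$, $U':=\sqrt{p}\,u'$, $h:=1/(|\xi_x|\sqrt{p})$, and $s_m:=(U+U')/2+mh$. By \cref{wghtedlocmod}, the Gaussian part of $\PP_x^{(m)}(\sqrt{p}Z,\sqrt{p}Z')$ is $\tfrac{\sqrt{2}}{|\xi_x|}e^{-2\pi s_m^2}$ times an $m$-independent factor $G(Z,Z'):=e^{-\pi p(u-u')^2/2}\PP_x(\sqrt{p}Z^\perp,\sqrt{p}{Z'}^\perp)$, and $m/\sqrt{p}=|\xi_x|(s_m-(U+U')/2)$, so each summand has the form $F_r(s_m)\,G(Z,Z')$ with $F_r(s)$ a (polynomial in $s$, $\sqrt{p}Z$, $\sqrt{p}Z'$) times $e^{-2\pi s^2}$. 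Euler--Maclaurin, combined with the Gaussian decay of $F_r$ at $-\infty$ absorbing the truncation at $m=-M_p$, gives
\[
\sum_{m=-M_p}^{0}F_r(s_m)=h^{-1}\int_{-\infty}^{(U+U')/2}F_r(s)\,ds+\tfrac{1}{2}F_r\bigl((U+U')/2\bigr)+\sum_{\ell\geq 1}c_\ell\,h^{2\ell-1}F_r^{(2\ell-1)}\bigl((U+U')/2\bigr)+O(p^{-\infty}).
\]
Since $p^{-1/2}h^{-1}=|\xi_x|$ and $p^{-1/2}h^{2\ell-1}=|\xi_x|^{-(2\ell-1)}p^{-\ell}$, and since iterated integration by parts
\[
\int_{-\infty}^a t^j e^{-2\pi t^2}\,dt=R_j(a)\,e^{-2\pi a^2}+c_j\int_{-\infty}^a e^{-2\pi t^2}\,dt
\]
(with $R_j$ of degree $j-1$ and $c_j=0$ when $j$ is odd) handles the integral term, each $S_{r,p}$ decomposes into a polynomial in $(\sqrt{p}Z,\sqrt{p}Z')$ times $\PP_x^{(-)}(\sqrt{p}Z,\sqrt{p}Z')$, plus boundary and Euler--Maclaurin contributions of the form polynomial $\cdot\PP_x^{(0)}(\sqrt{p}Z,\sqrt{p}Z')$ with extra factors of $p^{-1/2}$ and $p^{-\ell+1/2}$ respectively; collecting by order reproduces the two series in \cref{partBergasyfla}.

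For $r=0$, $Q_{0,x}\equiv 1$ gives $F_0(s)=\tfrac{\sqrt{2}}{|\xi_x|}e^{-2\pi s^2}$, whose leading integral contribution, after multiplication by $|\xi_x|\,G(Z,Z')$, is exactly $\PP_x^{(-)}(\sqrt{p}Z,\sqrt{p}Z')$, so $Q_{0,x}^{(-)}\equiv 1$. Degree and parity bounds on $Q_{r,x}^{(-)},Q_{r,x}^{(0)}$ follow from the total degree $\leq 3r$ and parity $r$ of $Q_{r,x}$: the substitution $Y\mapsto|\xi_x|(s-a)$ preserves total degree, integration by parts produces polynomials of degree $\leq j-1$ in $a=(U+U')/2$, and boundary/Euler--Maclaurin corrections contribute to the $\PP_x^{(0)}$ series at shifted indices whose parities match after the parity shift induced by differentiation. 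The principal obstacle is carrying out this bookkeeping---the interleaving of Euler--Maclaurin boundary corrections with the integration-by-parts output---uniformly in the $\CC^j(K)$-norm in $x$ and the derivatives in $(Z,Z')$, while arranging the cumulative truncation error $M_p\cdot p^{-k'/2+\delta}$ of Step 1 to stay within the final target $O(p^{-k/2+\delta})$, which is achieved by taking $k'=k+O(1)$ with $\alpha,\delta$ sufficiently small.
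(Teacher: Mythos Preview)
Your proposal is correct and follows essentially the same route as the paper: truncate the sum $P_p^{(-)}=\sum_{m\leq 0}P_p^{(m)}$ at $m=-\alpha p^{(1+\alpha)/2}$ via \cref{faroffdiagprop}, insert the equivariant expansion of \cref{wghtedasyth}, and apply Euler--Maclaurin in $m$ to convert the Gaussian-weighted sum into the integral defining $\PP_x^{(-)}$ plus boundary corrections giving the $\PP_x^{(0)}$-series. The only cosmetic difference is that the paper absorbs the accumulated summation error by taking $\alpha$ and $\delta$ small rather than by increasing the expansion order to $k'=k+O(1)$, and it packages the Euler--Maclaurin step as the ready-made identities \cref{EM2}--\cref{EM3} rather than invoking the step-size-$h$ form with odd-derivative corrections as you do; both lead to the same expansion.
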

\begin{proof}
First recall the
following Euler-MacLaurin formula, which can be found for instance in
\cite[Th.\,9.1]{RS17}, and which states
the existence of a universal sequence
$\{a_j\in\R\}_{j\in\N}$ such that
for any $f\in\cinf(\R,\R)$
whose derivatives of all order
tend to $0$ at infinity, and for any $r\in\N$, we have
\begin{equation}\label{EM1}
\sum_{m\in\N}\,f(m)=\int_0^{\infty}\,f(t)\,dt+\sum_{j=0}^{r-1}\,a_j\,f^{(j)}(0)
+\int_0^{\infty}\,a_r\,(t-\left \lceil{t}\right \rceil )\,f^{(r)}(t)\,dt\,.
\end{equation}
In particular, this implies the
existence of polynomials $\{P_{a,j}\in\R[v]\}_{j\in\N}$
of degree at most $j\in\N$ and
depending smoothly on $a>0$
such that for any compact interval $I\subset\,]0,+\infty[$,
we have the following estimate as $p\to\infty$, uniform in $v\in\R$ and $a\in I$,
\begin{equation}\label{EM2}
\sum_{m\in\N}\,e^{-2\pi\big(v-\frac{m}{a\sqrt{p}}\big)^2}=a\sqrt{p}\int_{-\infty}^v\,e^{-2\pi t^2}\,dt
+\sum_{j=0}^{r-1}\,p^{-\frac{j}{2}}\,P_{a,j}(v)
\,e^{-2\pi v^2}
+O(p^{-\frac{r}{2}})\,.
\end{equation}
More generally,
for any $k\in\N$, we get sequences of
polynomials $\{P_{a,k,j},\,\widetilde{P}_{a,k,j}\in\R[v]\}_{j\in\N}$
of degree at most $j+k\in\N$ and
depending smoothly on $a>0$
such that for any compact interval $I\subset\,]0,+\infty[$,
we have the following estimate
as $p\to\infty$, uniform in $v\in\R$ and $a\in I$,
\begin{equation}\label{EM3}
\begin{split}
&\sum_{m\in\N}\,\left(\frac{m}{\sqrt{p}}\right)^k\,
e^{-2\pi\big(v-\frac{m}{a\sqrt{p}}\big)^2}\\
&=a^{k+1}\sqrt{p}\int_{-\infty}^v\,\left(v-t\right)^k\,e^{-2\pi t^2}\,dt+\sum_{j=0}^{r-1}\,p^{-\frac{j}{2}}\,P_{a,k,j}(v)\,e^{-av^2}
+O(p^{-\frac{r}{2}})\,.\\
&=a^{k+1}\sqrt{p}\left(v^k\int_{-\infty}^v\,e^{-2\pi t^2}\,dt
+\widetilde{P}_{a,k,0}(v)\,e^{-a v^2}\right)
+\sum_{j=0}^{r-1}\,p^{-\frac{j}{2}}\,\widetilde{P}_{a,k,j+1}(v)
\,e^{-2\pi v^2}
+O(p^{-\frac{r}{2}})\,.
\end{split}
\end{equation}
Then by \cref{wghtedasyth,faroffdiagprop}, for any compact
subset $K\subset\mu^{-1}(0)$, $k\in\N$ and $\delta>0$,
we can choose $\alpha>0$ such that we
have the following uniform estimate as $p\to\infty$,
uniform in $x\in K$, in $\epsilon\in\,]0,\alpha/2[$
and in $Z,\,Z'\in T^H_xX$ satisfying $|Z|,\,|Z'|<\ept$,
\begin{equation}\label{P-sum2}
\begin{split}
&p^{-n}P_{p,x}^{(-)}(Z,Z')
=p^{-n}\sum_{m\geq-\alpha p^{\frac{1+\alpha}{2}}}^{m=0}P_{p,x}^{(m)}(Z,Z')+O(p^{-\infty})\\
&=p^{-\frac{1}{2}}\sum_{r=0}^{k-1} p^{-\frac{r}{2}} \sum_{m\geq-\alpha p^{\frac{1+\alpha}{2}}}^{m=0}
Q_{r,x}\left(\frac{m}{\sqrt{p}},\sqrt{p}Z,\sqrt{p}Z'\right)
\PP_x^{(m)}(\sqrt{p}Z,\sqrt{p}Z') +
O(p^{-\frac{k}{2}+\delta})\,.
\end{split}
\end{equation}
On the other hand, for any $r\in\N$,
using the local model \cref{wghtedlocmod}
for the equivariant Bergman kernel
and recalling from \cref{wghtedasyth}
that the polynomial $Q_r$
in formula \cref{P-sum2}
is a polynomial of degree less
than $3r\in\N$ in $m/\sqrt{p}$,
for any $\alpha>0$ small enough,
we get constants $C_r>0$ and $c>0$ such that
for all $\epsilon\in\,]0,\alpha/2[$, $x\in K$ and $Z,\,Z'\in T^H_xX$ as in \cref{Zperp}
satisfying $|Z|,\,|Z'|<\eptt$,
we have
\begin{equation}\label{sumcomput1}
\begin{split}
&\sum_{m<-\alpha p^{\frac{1+\alpha}{2}}}
\left|Q_{r,x}\left(\frac{m}{\sqrt{p}},Z,Z'\right)
\PP_x^{(m)}(Z,Z')\right|\\
&\leq C\,p^{\frac{3r}{2}\alpha}\sum_{m<-\alpha p^{\frac{1+\alpha}{2}}}\,
\left|\frac{m}{\sqrt{p}}\right|^{3r}
\exp\left(-2\pi\left(\frac{u+u'}{2}-\frac{m}{|\xi_x|\sqrt{p}}\right)^2\right)\\
&\leq C_r\,p^{\frac{3r}{2}\alpha+\frac{1}{2}}\,
\exp\left(-c\,p^{\frac{\alpha}{2}}\right)\,,
\end{split}
\end{equation}
which is decreasing faster than any power of $p\in\N$.
In particular, we can apply \cref{EM3} with
$v=(u+u')/2$ and $a=|\xi_x|$
to get from \cref{P-sum2,sumcomput1}
sequences of polynomials
$\{P_r,\,R_{r,k}\in\C(Z,Z')\}_{r,\,k\in\N}$ such
that for any compact subset $K\subset\mu^{-1}(0)$ and
$\delta>0$, there is $\alpha>0$
such that we have the following asymptotic
expansion as $p\to\infty$,
for all $\epsilon\in\,]0,\alpha/2[$, $x\in K$
and $Z,\,Z'\in T^H_xX$ as in \cref{Zperp}
satisfying $|Z|,\,|Z'|<\eptt$,
\begin{equation}\label{sumcomput2}
\begin{split}
\sum_{m\geq-\alpha p^{\frac{1+\alpha}{2}}}^{m=0}
Q_{r,x}\left(\frac{m}{\sqrt{p}},Z,Z'\right)
\PP_x^{(m)}(Z,Z')&=
\sum_{m\leq 0}
Q_{r,x}\left(\frac{m}{\sqrt{p}},Z,Z'\right)
\PP_x^{(m)}(Z,Z')+O(p^{-\infty})\\
=\frac{\sqrt{2}}{|\xi_x|}\sum_{m\in\N}Q_{r,x}\left(-\frac{m}{\sqrt{p}},Z,Z'\right)
\exp&\left(-2\pi\left(\frac{(u+u')}{2}+\frac{m}{|\xi_x|\sqrt{p}}\right)^2\right)\\
&\exp\left(-\frac{\pi}{2}(u-u')^2\right)\,\PP(Z^\perp,{Z'}^\perp)+O(p^{-\infty})\\
=\sqrt{p}\,P_{r}(Z,Z')
\PP^{(-)}(Z,Z')+\sqrt{p}&\,R_{r,0}(Z,Z')\PP^{(0)}(Z,Z')\\
+\sum_{k=0}^{j-1}p^{-\frac{k}{2}}\,
&R_{r,k}(Z,Z')\PP^{(0)}(Z,Z')+O(p^{-\frac{j}{2}+\delta})\,,
\end{split}
\end{equation}
where $P_0\equiv 1$ and $R_{0,0}\equiv 0$ by equation \cref{EM2}
and the fact that $Q_0\equiv 1$ in the asymptotic
expansion of \cref{wghtedasyth}. Plugging \cref{sumcomput2}
into \cref{P-sum2} after the change of variables
$(Z,Z')\mapsto (\sqrt{p}Z,\sqrt{p}Z')$, we then get the result.
\end{proof}

\begin{rmk}\label{RSrmk}
For any $x\in\mu^{-1}(0)$, by
definition \cref{psidef}
of the bounded family of charts used in \cref{wghtedasyth},
we know that there exists $\epsilon>0$
such that for any $u\in\,]-\epsilon,\epsilon[$,
we have  $\phi_u(x)=\psi_x(-\frac{u}{|\xi_x|}\,e_1)$,
where $\phi_u:\mu^{-1}(0)\to X$ is the flow of
$-J\xi/|\xi|^2$ at time $u\in\,]-\epsilon,\epsilon[$
as defined in \cref{phidef}, which satisfies
$\mu(\phi_u(x))=u$. Then
\cref{faroffdiagprop,partBergasyth} applied to
$Z=Z'=-\frac{u}{|\xi_x|}\,e_1$
recover
the asymptotic expansion of the partial Bergman on the diagonal
established by Ross and Singer
in \cite[Th.\,1.2]{RS17} and Zelditch
and Zhou in \cite[(8),\,Th.\,4]{ZZ19},
while the general result recovers
a result of Shabtai in \cite[Th.\,1.7]{Sha25}.
In all these results, the asymptotic expansions hold
in neighborhoods of size of order $\frac{1}{\sqrt{p}}$
around the boundary
$\mu^{-1}(0)\subset X$ and outside neighborhoods
of size of order $1$ as $p\to\infty$,
while \cref{faroffdiagprop,partBergasyth} give a full
asymptotic expansion in any neighborhood of $\mu^{-1}(0)$.
\end{rmk}

\section{Determinantal point processes}
\label{DPPsec}

In \cref{Gensec}, we introduce the notion of a determinantal
point process in our setting and recall its fundamental
properties following Berman in \cite{Ber18}, then
explain how it recovers the usual Ginibre ensemble
\cref{Gindef} in the case of $X=\C$ endowed with
the action of $S^1\subset\C$ by multiplication.
In \cref{LLNsec}, we establish a law of large numbers
for this determinantal point process, while in \cref{CLTsec},
we establish a central limit theorem, thus concluding
the proof of \cref{mainth}.

\subsection{Generalities}
\label{Gensec}

Let $(X,\om,J)$ be
a Kähler manifold prequantized by
$(L,h^L,\nabla^L)$ in the sense of \cref{preq},
with bounded geometry
at infinity in the sense of \cref{setting} and endowed
with a compatible $S^1$-action as in \cref{actionsec}.
We assume in addition that $S^1$ acts freely on
the level set $\mu^{-1}(0)\subset X$ of its Kostant
moment map $\mu\in\cinf(X,\R)$ introduced in
\cref{Kostantmudef}, and that it satisfies the following.

\begin{defi}\label{volgrowth}
We say that $\mu\in\cinf(X,\R)$
has \emph{polynomial growth} if it is proper,
bounded from below and
and if there exists $\delta>0$, $C>0$ and $N\in\N$ such that
for any $x\in X$
\begin{equation}
\Vol(\{\mu\leq \mu(x)\})\leq C\,|\mu(x)|^N
\quad\text{ and }\quad|d\mu_x|
\leq C\,|\mu(x)|^{1-\delta}\,.
\end{equation}
\end{defi}

Note that in the case $X=\C^n$, any positive polynomial in
the coordinates of $\C^n$ has polynomial growth in the sense of
\cref{volgrowth}.
We can
then establish the following result from \cref{oscvener},
which can be understood as a weak version of the
principle of \emph{Quantization commutes with Reduction} for 
non-compact manifolds established by
Ma and Zhang in \cite{MZ14}.

\begin{prop}\label{QR=0}
Assume that the Kostant moment map $\mu\in\cinf(X,\R)$ has
polynomial growth in the sense of \cref{volgrowth}.
Then there exists $p_0\in\N$ and $M\in\N$ such that for any $p\geq p_0$ and any $m\in\Z$ satisfying $m< -p\,M$,
we have
\begin{equation}\label{QR=0fla}
H^0_{(2)}(X,L^p)_m=\{0\}\,.
\end{equation}
In particular, the subspace
$\HH_p\subset H^0_{(2)}(X,L^p)$ defined by \cref{Hpdef}
satisfies
\begin{equation}\label{Hpfin}
\HH_p=\bigoplus\limits_{m=-M\,p}^{m=0}\,H^0_{(2)}(X,L^p)_m\,,
\end{equation}
and has finite dimension $N_p:=\dim\HH_p\in\N$.
\end{prop}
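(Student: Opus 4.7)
The plan is to combine the dimension identity
\begin{equation*}
\dim H^0_{(2)}(X,L^p)_m = \int_X P_p^{(m)}(x,x)\,dv_X(x) \in [0,+\infty],
\end{equation*}
which follows in this generality from $P_p^{(m)}(x,x) = \sum_j |s_j(x)|^2_p$ for any orthonormal basis $\{s_j\}$ of the weight space together with Tonelli's theorem, with the decay estimate of \cref{oscvener} integrated against the polynomial growth of $\mu$. The moment map equation \cref{momentfla} and \cref{gTX} yield $|\xi_x| = |d\mu_x|$, so \cref{volgrowth} gives $|\xi_x| \leq C|\mu(x)|^{1-\delta}$, and a coarea integration by parts using $\Vol(\{\mu\leq s\}) \leq C|s|^N$ shows $\int_{\{\mu > R\}} \mu^{-\gamma}\,dv_X(x) \leq C R^{N-\gamma}$ for any $\gamma > N$ and $R$ large.

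To prove \cref{QR=0fla}, I would fix an integer $k$ with $k > 2n$ and $k\delta > N$, and choose $M > 0$ large enough that $\mu(x) - m/p \geq |m|/(2p)$ holds uniformly in $x\in X$ and in $m$ with $m < -pM$, which is possible since $\mu$ is bounded below. I would then split $X$ into $\{\mu \leq |m|/p\}$ and $\{\mu > |m|/p\}$. On the first region, $|\xi_x|^k \leq C(|m|/p)^{k(1-\delta)}$ and $|\mu(x)-m/p|^k \geq (|m|/p)^k$ combine to give a ratio of order $(|m|/p)^{-k\delta}$, while the volume is bounded by $C(|m|/p)^N$. On the second region, $|\mu(x)-m/p| \geq \mu(x)$ gives a ratio $\leq C^k\mu(x)^{-k\delta}$, integrable by the coarea estimate with total contribution of order $(|m|/p)^{N-k\delta}$. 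Plugging into \cref{oscvener} and using $|m|/p > M$ together with $N-k\delta < 0$, both contributions are bounded by $C_{k,M}\,p^{n-k/2}$ uniformly in $m < -pM$. Since $k > 2n$, this tends to $0$ as $p \to \infty$, forcing the integer $\dim H^0_{(2)}(X,L^p)_m$ to vanish for all $p \geq p_0$ and all such $m$.

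The decomposition \cref{Hpfin} is then immediate from \cref{Hpdef}. To finish I must verify that each weight space with $m \in [-Mp, 0]$ is itself finite-dimensional; for this I would again bound $\int_X P_p^{(m)}(x,x)\,dv_X$ by splitting into a neighborhood $\{|\mu - m/p| < 1\}$ of $\mu^{-1}(m/p)$ of finite volume controlled by polynomial growth, on which the trivial bound $P_p^{(m)}(x,x) \leq Cp^n$ from \cref{theta} via formula \cref{Pjfla} suffices, and its complement, where \cref{oscvener} with $k$ large and the same coarea estimate produce a convergent integral.

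The hard part will be the uniformity in $m$ throughout the vanishing step: the bound of \cref{oscvener} degrades as $m/p$ approaches the range of $\mu$, and since $m$ runs through the unbounded set $(-\infty, -pM)$, the growth factors $|\xi|^k$ and $\Vol(\{\mu \leq |m|/p\})$ must be absorbed by the denominator $|\mu(x)-m/p|^k$. The polynomial growth of both volume and gradient makes this possible, but forces the simultaneous constraints $k\delta > N$ for integrability and $k > 2n$ for decay in $p$.
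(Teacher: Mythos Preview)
Your proposal is correct and follows essentially the same route as the paper: both arguments bound $\dim H^0_{(2)}(X,L^p)_m=\int_X P_p^{(m)}(x,x)\,dv_X$ by combining the decay estimate of \cref{oscvener} with $|\xi_x|=|d\mu_x|$ and the polynomial growth hypothesis, then force the integer dimension below $1$ for $p$ large and $m<-pM$. The only cosmetic difference is that the paper partitions $X$ into unit slabs $\{q\leq\mu<q+1\}$ and sums over $q$, whereas you split into $\{\mu\leq|m|/p\}$ and $\{\mu>|m|/p\}$ and invoke a coarea-style tail bound; both organizations yield the same $C_k\,p^{n-k/2}$ estimate once $k\delta>N$ and $k>2n$.
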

\begin{proof}
Assume that the moment map $\mu\in\cinf(X,\R)$ has
polynomial growth in the sense of \cref{volgrowth}.
In particular, it is bounded from below, and we can pick
$M\in\N$ such that for all $x\in X$, we have $\mu(x)>-M+1$. 
Note also from \cref{gTX} and \cref{momentfla}
that the fundamental vector field \cref{xidef}
satisfies $|\xi_x|=|d\mu_x|$.
Then \cref{volgrowth} and
\cref{oscvener} imply the existence of constants
$\delta,\,C>0$ such that
for any $k\in\N$,  there is a constant $C_k>0$ such that
for any $p\in\N$, any $m\in\N$ satisfying $m<-p\,M$
and any $q\in\Z$ satisfying $q\geq-M+1$, we have
\begin{equation}\label{Npcomput0}
\begin{split}
\int_{\{q\leq \mu<q+1\}}\,|P_p^{(m)}(x,x)|_p\,dv_X(x)
&\leq C_k\,p^{n-\frac{k}{2}}\,\int_{\{q\leq \mu<q+1\}}\,\left|\mu+M\right|^{-k}\,|d\mu|^k\,dv_X
\\
&\leq C_k\,C\,p^{n-\frac{k}{2}}
|q+1|^{N} |q+M|^{-k}|q+1|^{k(1-\delta)}\,.
\end{split}
\end{equation}
Taking $k>(N+2)/\delta$ and
summing over all $q\in\Z$ satisfying $q\geq-M+1$, by standard properties
of smooth Schwartz kernels of the orthogonal projection
$P^{(m)}_p:L^2(X,L^p)\to H^0_{(2)}(X,L^p)_m$, for any $k\in\N$,
we get a constant $C_k>0$ such that for all
$p\in\N$ and all $m\in\Z$ satisfying $m<-p\,M$, we have
\begin{equation}\label{dimHm}
\begin{split}
\dim H^0_{(2)}(X,L^p)_m&=\int_X\,P_p^{(m)}(x,x)\,dv_X(x)\\
&=\sum_{q=-M+1}^\infty\int_{\{q\leq \mu<q+1\}}\,P_p^{(m)}(x,x)\,dv_X(x)
\leq C_k\,p^{n-\frac{k}{2}}\,.
\end{split}
\end{equation}
Taking $k>2n$, we can choose $p_0\in\N$ such that for all $p\geq p_0$
and all $m\in\Z$ satisfying $m<-M\,p$,
we have $C_k\,p^{n-\frac{k}{2}}<1$, so that $\dim H^0_{(2)}(X,L^p)_m=0$.
This implies the identities \cref{QR=0fla} and \cref{Hpfin}
by definition
\cref{Hpdef} of the subspace $\HH_p\subset H^0_{(2)}(X,L^p)$.


Finally, to show that the space \cref{Hpdef} is
finite-dimensional, it suffices to show that
for any $m\leq 0$, the weight space
$H^0_{(2)}(X,L^p)_m$ is finite-dimensional.
Fix then $m\in\N$ such that $m\leq 0$.
For any $k\in\N$, 
\cref{oscvener,volgrowth} imply as in \cref{Npcomput0}
the existence of a constant $\delta>0$ such that for any $k\in\N$,
there exists a constant
$C_k>0$ such that for any $p\in\N$
and any $q\in\N$ satisfying $q\geq 1$, we have
\begin{equation}\label{Npcomput3pre}
\int_{\{q\leq \mu<q+1\}}\,P_p^{(m)}(x,x)
\,dv_X(x)\leq C_k\,p^{-k}
(q+1)^{N} q^{-\delta k}\,.
\end{equation}
Taking $k>(N+2)/\delta$, we obtain that the
sum of \cref{Npcomput3pre}
for all $q\in\N$ satisfying $q\geq 1$ converges,
so that $\dim H^0_{(2)}(X,L^p)_m<+\infty$
by the first line of \cref{dimHm} and
using the fact that $\{\mu\leq 1\}\subset X$ is
compact.
\end{proof}

Let us now fix $p\in\N$ large enough so that
the conclusions of \cref{QR=0} hold,
set $N_p:=\dim\HH_p\in\N$,
and consider an orthonormal basis
$\{s_j\in\HH_p\}_{j=0}^{N_p}$ with respect
to the $L^2$-Hermitian product \cref{L2}.
We define the \emph{Slater determinant}
$\det s_p\in\cinf(X^{N_p},(L^p)^{\boxtimes N_p})$
as the section of $(L^p)^{\boxtimes N_p}$
over the $N_p$-fold product $X^{N_p}$ given
for any $(x_1,x_2,\cdots x_{N_p})\in X^{N_p}$
by
\begin{equation}\label{Slaterdef}
(\det s_p)(x_1,x_2,\cdots x_{N_p})
:=\det(s_j(x_i))_{i,\,j=1}^{N_p}\,,
\end{equation}
which does not depend on the choice of orthonormal
basis of $\HH_p$.
Let us endow $(L^p)^{\boxtimes N_p}$ with the
Hermitian metric induced by $h^L$, and write
$|\cdot|_p$ for the induced pointwise norm.
The following fundamental Lemma
from the basic theory of
\emph{determinantal point processes}
can be found for instance in \cite[\S.\,4.2.3]{AGZ10},
following the adaptation to prequantized Kähler manifolds
due to Berman in \cite[\S\,5.1]{Ber18}.

\begin{lem}\label{dmupdef}
For any $p\in\N$ and
any $(x_1,x_2,\cdots x_{N_p})\in X^{N_p}$,
the Slater determinant \cref{Slaterdef}
satisfies the following formula,
\begin{equation}\label{Slaterfla}
|(\det s_p)(x_1,x_2,\cdots x_{N_p})|_p^2
=\det(P_p^{(-)}(x_i,x_j))_{i,\,j=1}^{N_p}\,,
\end{equation}
and the measure $d\nu_{N_p}$ over $X_{N_p}$ given by
\begin{equation}\label{dmupfla}
d\nu_{N_p}:=\frac{1}{N_p!}\,
\left|\det s_p\right|_p^2~dv_X^{N_p}\,,
\end{equation}
defines a probability measure over $X_{N_p}$, called
the \emph{determinantal point process} associated with
the Slater determinant \cref{Slaterdef}.
\end{lem}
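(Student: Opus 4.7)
The plan is to treat the two statements separately, both by classical linear algebra identities adapted to the line bundle setting. Throughout, the key observation is that the partial Bergman kernel admits the reproducing-kernel expansion
\begin{equation*}
P_p^{(-)}(x,y)=\sum_{j=1}^{N_p}\,s_j(x)\otimes s_j(y)^*\in L^p_x\otimes (L^p_y)^*\,,
\end{equation*}
which follows from \cref{faroffdiagprop} together with the fact that $\{s_j\}_{j=1}^{N_p}$ is an $L^2$-orthonormal basis of $\HH_p$, since both sides represent the Schwartz kernel of the orthogonal projection onto $\HH_p$.

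For the identity \cref{Slaterfla}, I would consider the $N_p\times N_p$ matrix $S(x_1,\dots,x_{N_p})$ whose entries are the scalars $s_j(x_i)$ read off in any fixed unitary trivialization of each $L^p_{x_i}$. Then $\det s_p$ is the section whose norm squared, computed through the trivialization, equals $|\det S|^2$, and since this norm squared is independent of the choice of trivialization it defines a global function on $X^{N_p}$. The matrix product $SS^*$ has entries $\sum_j s_j(x_i)\,s_j(x_k)^*$, which under the natural identification $\End(L^p_{x_i})\simeq\C$ agree with $P_p^{(-)}(x_i,x_k)$ by the reproducing formula above. Hence
\begin{equation*}
\det\bigl(P_p^{(-)}(x_i,x_k)\bigr)_{i,k=1}^{N_p}=\det(SS^*)=|\det S|^2=|(\det s_p)(x_1,\dots,x_{N_p})|_p^2\,,
\end{equation*}
which is \cref{Slaterfla}.

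For the probability measure statement, the plan is to expand the Slater determinant as a signed sum over permutations and integrate term by term. Writing
\begin{equation*}
(\det s_p)(x_1,\dots,x_{N_p})=\sum_{\sigma\in\mathfrak{S}_{N_p}}\,\mathrm{sgn}(\sigma)\,\bigotimes_{i=1}^{N_p}s_{\sigma(i)}(x_i)\,,
\end{equation*}
the squared norm produces a double sum over $(\sigma,\tau)\in\mathfrak{S}_{N_p}^2$. Applying Fubini, each term factors as $\mathrm{sgn}(\sigma)\mathrm{sgn}(\tau)\prod_{i=1}^{N_p}\langle s_{\sigma(i)},s_{\tau(i)}\rangle_p$, and orthonormality forces the product to equal $\prod_i \delta_{\sigma(i),\tau(i)}$, so only the diagonal $\sigma=\tau$ contributes. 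This yields exactly $N_p!$ as the integral of $|\det s_p|_p^2$, confirming that $d\nu_{N_p}$ has total mass one.

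The only nonroutine point is justifying Fubini, which is immediate here because the orthonormal sections $s_j\in\HH_p\subset L^2(X,L^p)$ have finite $L^2$-norm and the finite sum defining $\det s_p$ has only $N_p!$ terms; the integrand is therefore summable, and the basis independence of the resulting measure follows from the intrinsic characterization \cref{Slaterfla}.
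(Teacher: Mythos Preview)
Your proof is correct and follows essentially the same approach as the paper: both use the reproducing-kernel expansion $P_p^{(-)}(x,y)=\sum_j s_j(x)\otimes s_j(y)^*$ together with the identity $\det(SS^*)=|\det S|^2$ for \cref{Slaterfla}, and for the normalization both rely on orthonormality to reduce the integral of $|\det s_p|_p^2$ to $N_p!$. The only cosmetic difference is that where the paper invokes the Andreief formula $\int_{X^{N_p}}|\det(s_j(x_i))|_p^2\,dv_X^{N_p}=N_p!\det(\langle s_i,s_j\rangle_p)$ by name, you reprove it directly via the double permutation expansion; the content is the same.
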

\begin{proof}
Let us first recall that, by
standard properties of the Schwartz
kernel of the orthogonal projection $P_p^{(-)}:L^2(X,L^p)\to\HH_p$
on the subspace $\HH_p\subset L^2(X,L^p)$ of finite dimension
$N_p\in\N$,
for any orthonormal basis $\{s_j\in\HH_p\}_{j=1}^{N_p}$ and any $x,\,y\in X$, we have
\begin{equation}
P_p^{(-)}(x,y)=\sum_{j=1}^{N_p}\,s_j(x)\otimes s_j(y)^*\in L^p_x\otimes(L^p_y)^*\,.
\end{equation}
One then readily compute for any
$(x_1,x_2,\cdots x_{N_p})\in X^{N_p}$ that
\begin{equation}
\begin{split}
|(\det s_p)(x_1,x_2,\cdots x_{N_p})|_p^2&=
\det(s_j(x_i))_{i,\,j=1}^{N_p}\,
\overline{\det(s_j(x_i))_{i,\,j=1}^{N_p}}\\
&=\det(P_p^{(-)}(x_i,x_j))_{i,\,j=1}^{N_p}\,,
\end{split}
\end{equation}
which establishes \cref{Slaterfla}.

On the other hand,
the fact that \cref{dmupfla} defines a probability measure
is a straightforward consequence of the following
\emph{Andreief formula} from \cite[(5.8)]{Ber18},
valid for any basis $\{s_j\in\HH_p\}_{j=1}^{N_p}$ and
which can be adapted for instance
from the proof of \cite[Lem.\,3.2.3]{AGZ10},
\begin{equation}
\int_{X^{N_k}}\,
\left|\det(s_j(x_i))_{i,\,j=1}^{N_p}\right|_p
\,dv_X(x_1)\cdots dv_X(x_{N_k})=
N_p!\,\det\left(\<s_i,s_j\>_p\right)_{i,\,j=1}^{N_p}\,.
\end{equation}
Since the basis
$\{s_j\in\HH_p\}_{j=1}^{N_p}$
is orthonormal, this establishes the result.
\end{proof}

For any bounded measurable function $f\in L^\infty(X,\R)$
and any $p\in\N$,
let us consider the associated
\emph{linear statistics}, which is the random variable
over $(X^{N_p},d\nu_{N_p})$ defined by the function
\begin{equation}\label{linstat}
\begin{split}
\NN_p[f]:X^{N_p}&\longrightarrow\R\\
(x_j)_{j=1}^{N_p}&\longmapsto\sum_{j=1}^{N_p}\,f(x_j)\,.
\end{split}
\end{equation}
Recall that
for all $x\in X$, we have $P_p(x,x)>0$
through the canonical identification $L^p_x\otimes(L^p_x)^*\simeq\C$.
The following explicit formulas
for the expectation $\IE[\NN_p[f]]$ and the variance $\IV[\NN_p[f]]$
of the random variable \cref{linstat} are straightforward concequences
of the explicit formula for correlations functions of
determinantal point processes as defined for instance in
\cite[Def.\,4.2.1]{HKPV09}, which can be obtained following
the proof of \cite[Lem.\,4.5.1]{HKPV09}.

\begin{prop}\label{ExpVarprop}
{\cite[Lem.\,6.2]{Ber18}}
For any $f\in L^\infty(X,\R)$ and $p\in\N$,
the expectation and the variance
of the random variable \cref{linstat} with respect to the determinantal point
process \cref{dmupfla} are given by
\begin{multline}
\IE[\NN_p[f]]=\int_X\,P_p^{(-)}(x,x)\,f(x)\,dv_X(x)~\text{ and }~\\
\IV[\NN_p[f]]=\frac{1}{2}\int_X\int_X\,\left|P_p^{(-)}(x,y)\right|^2_p
\,(f(x)-f(y))^2\,dv_X(x)\,dv_X(y)\,.
\end{multline}
\end{prop}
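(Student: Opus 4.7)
The plan is to derive both formulas from the general identity for $k$-point correlation functions of a determinantal point process, specialized to $k=1$ and $k=2$, then use the projection property $(P_p^{(-)})^2 = P_p^{(-)}$ to collapse the variance expression.

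First I would compute the one-point intensity. Writing $\NN_p[f] = \sum_j f(x_j)$ and using the symmetry of $d\nu_{N_p}$ in its $N_p$ arguments, we get $\IE[\NN_p[f]] = N_p \int_X f(x_1)\,\rho_p(x_1)\,dv_X(x_1)$ where $\rho_p(x_1)$ is the first marginal density of $d\nu_{N_p}$. Expanding the Slater determinant in \cref{Slaterfla} along the first row and exploiting the orthonormality of $\{s_j\}$, the integration over $x_2,\dots,x_{N_p}$ (via the Andreief identity as used already in \cref{dmupdef}) gives $N_p\rho_p(x_1) = \sum_j |s_j(x_1)|_p^2 = P_p^{(-)}(x_1,x_1)$, which yields the expectation formula.

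Next I would compute the two-point correlation function. Writing $\NN_p[f]^2 = \sum_j f(x_j)^2 + \sum_{j\neq k} f(x_j) f(x_k)$, taking expectations introduces the one-point density as above and a two-point density $\rho_p^{(2)}(x,y)$ obtained by integrating $|\det s_p|_p^2/N_p!$ over $x_3,\dots,x_{N_p}$. Applying the Andreief identity to the $2\times 2$ block gives the standard determinantal formula
\[
\rho_p^{(2)}(x,y) = \det\begin{pmatrix} P_p^{(-)}(x,x) & P_p^{(-)}(x,y) \\ P_p^{(-)}(y,x) & P_p^{(-)}(y,y) \end{pmatrix} = P_p^{(-)}(x,x)\,P_p^{(-)}(y,y) - |P_p^{(-)}(x,y)|_p^2,
\]
using the Hermiticity identity $P_p^{(-)}(y,x) = P_p^{(-)}(x,y)^*$ already recorded in \cref{Pjprop}.

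Finally, I would assemble the variance. Subtracting $\IE[\NN_p[f]]^2 = \iint f(x)f(y) P_p^{(-)}(x,x) P_p^{(-)}(y,y)\,dv_X dv_X$ from $\IE[\NN_p[f]^2]$, the product of intensities cancels and one obtains
\[
\IV[\NN_p[f]] = \int_X f(x)^2 P_p^{(-)}(x,x)\,dv_X(x) - \iint_{X\times X} f(x) f(y)\,|P_p^{(-)}(x,y)|_p^2 \,dv_X(x)\,dv_X(y).
\]
The key algebraic step, and the only nontrivial point, is to rewrite the first integral using the reproducing property of the orthogonal projector, namely $P_p^{(-)}(x,x) = \int_X |P_p^{(-)}(x,y)|_p^2\,dv_X(y)$, which follows directly from $(P_p^{(-)})^2 = P_p^{(-)}$ and Hermiticity. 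Substituting and then symmetrizing in $(x,y)$ converts $f(x)^2 - f(x)f(y)$ into $\tfrac{1}{2}(f(x)-f(y))^2$, giving the stated formula.

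The routine but slightly delicate point is justifying the use of Fubini and the reproducing property in the non-compact setting: since $P_p^{(-)}$ has finite rank $N_p < \infty$ by \cref{QR=0}, $P_p^{(-)}(x,y)$ is square-integrable on $X\times X$ (with $\iint |P_p^{(-)}|_p^2 = N_p$), and $f \in L^\infty(X,\R)$, all integrals converge absolutely, so the formal manipulations are fully legitimate. This is the only place where non-compactness of $X$ could have been an obstacle.
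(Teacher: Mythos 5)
Your proposal is correct, and it is essentially the proof the paper delegates to the references: the paper invokes \cite[Lem.\,6.2]{Ber18} and the correlation-function formalism of \cite[Lem.\,4.5.1]{HKPV09}, which is exactly what you carry out — the $k=1$ and $k=2$ correlation functions via Andreief and orthonormality, the reproducing identity $P_p^{(-)}(x,x)=\int_X|P_p^{(-)}(x,y)|_p^2\,dv_X(y)$ coming from $(P_p^{(-)})^2=P_p^{(-)}$ and Hermiticity, and then symmetrization in $(x,y)$ to produce the factor $\tfrac12(f(x)-f(y))^2$. Your final remark about finite rank guaranteeing absolute convergence in the non-compact setting is the right observation and justifies the Fubini steps.
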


\begin{ex}\label{Ginibreex}
Let us consider the model case $X=\C$ equipped with the trivial Hermitian line
bundle $(L,h^L)=(\C,|\cdot|)$, endowed with the Hermitian connection $\nabla^L$
defined for all $z\in\C$ by
\begin{equation}
\nabla^L:=d+\frac{\pi}{2}(zd\bar{z}-\bar{z}dz)\,.
\end{equation}
Its curvature satisfies the prequantization formula \cref{preq}
for the standard symplectic form $\om=\frac{\sqrt{-1}}{2}dz\wedge d\bar{z}$ on $X=\C$, and
for any $p\in\N$, the Cauchy-Riemann operator associated
with the induced holomorphic structure on $L^p=\C$ is given by
$\nabla^{L^p}_{\frac{\partial}{\partial\bar{z}}}=\frac{\partial}{\partial\bar{z}}+p\,\frac{\pi}{2} z$, so that the global holomorphic sections of $L^p$
are exactly the smooth sections $s\in\cinf(X,L^p)$ of the form
$s(z)=f(z)\,e^{-p\frac{\pi}{2}|z|^2}$ for all $z\in\C$, where $f\in H^0(\C,\C)$
is holomorphic. Let now $S^1\subset\C$ act both on $X=\C$ and on $L=\C$
by complex multiplication, so that for any holomorphic section
$s\in H^0(X,L^p)$ as above, for all $t\in S^1\simeq\R/\Z$ and $z\in\C$, we have
\begin{equation}
\varphi_t^*s(z)=
e^{-2\pi\sqrt{-1} pt}f(e^{2\pi\sqrt{-1}t}z)e^{-p\frac{\pi}{2}|z|^2}\,.
\end{equation} 
We thus see that for all $m\in\Z$ satisfying $m\geq -p$,
the associated weight space \cref{wghtspace} is the subspace
 $H^0_{(2)}(X,L^p)_m\subset H^0_{(2)}(X,L^p)$ generated by the section
$s_m\in H^0_{(2)}(X,L^p)$ defined for all $z\in\C$ by
\begin{equation}
s_m(z)=z^{m-p}\,e^{-p\frac{\pi}{2}|z|^2}\,,
\end{equation}
while $H^0_{(2)}(X,L^p)_m=\{0\}$ for all $m\in\Z$ satisfying $m<-p$.
This shows that the finite dimensional
space \cref{Hpdef} used to define the
determinantal point process \cref{DPPholdef} considered in \cref{mainth}
coincides with the subspace $\HH_p\subset \cinf(X,L^p)$ generated
by the orthonormal families of functions \cref{Gindef}
used to define the \emph{Ginibre ensemble}, after the
change of variable $z\mapsto z/\sqrt{\pi}$ and setting $N:=p+1$.

On the other hand, the associated
Kostant moment map $\mu\in\cinf(X,\R)$ as in
\cref{Kostantmudef}
is given for any $z\in\C$ by
$\mu(z)=\pi\,|z|^2-1$. Thus after the change
of variable $z\mapsto z/\sqrt{\pi}$, the law of large numbers
\cref{LLNmainth} in
\cref{mainth} recovers the classical result
of Ginibre in \cite[\S\,1]{Gin65}, stating that this
determinantal point process admits the measure
$d\nu:=\mathds{1}_{\mathbb{D}}\,dv_\C$
as an equilibrium measure, while the
asymptotics \cref{Varmainth}
for its variance recovers the asymptotics \cref{Varintro}
of Rider and Virag in \cite{RV07}
describing the behavior of its fluctuations, as well as the
corresponding central limit theorem.
\end{ex}

\subsection{Law of large Numbers}
\label{LLNsec}

Consider the setting of \cref{Gensec},
and for any $f\in L^\infty(X,\R)$,
let us consider the linear statistics \cref{linstat}
as a random variable with respect to the determinantal
point process introduced in \cref{dmupdef}.
The first part of \cref{mainth} is a consequence of the
following \emph{law of large numbers}, which
is the first main result of this section.

\begin{theorem}\label{LLN}
For any $f\in L^\infty(X,\R)$,
the expectation of the linear
statistics \cref{linstat} is finite for all
$p\in\N$, and
satisfies the following asymptotics as $p\to\infty$,
\begin{equation}\label{Expfla}
\IE\left[\NN_p[f]\right]=p^n\,\int_{\{\mu<0\}}\,f(x)\,dv_X(x)
+o(p^{n})\,.
\end{equation}
Furthermore, we have the following convergence
in probability as $p\to\infty$,
\begin{equation}\label{LLNfla1}
\frac{1}{N_p}\NN_p[f]\xrightarrow{~p\to\infty~}~
\frac{1}{\Vol\left(\{\mu<0\}\right)}\int_{\{\mu<0\}}\,f\,dv_X\,.
\end{equation}
If in addition $f\in L^\infty(X,\R)$
is continuous with compact support,
there exists $C>0$ such that
for any $\epsilon>0$ and $p\in\N$, we have
\begin{equation}\label{LLNfla2}
\IP\left(\left\{(x_j)_{j=1}^{N_p}\in X^{N_p}~\Bigg|~\left|\frac{1}{N_p}\NN_p[f]-\frac{1}{\Vol\left(\{\mu<0\}\right)}\int_{\{\mu<0\}}\,f\,dv_X\right|>\epsilon\right\}\right)\leq\frac{C}{\epsilon p^n}\,.
\end{equation}
\end{theorem}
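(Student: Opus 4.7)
The proof has three natural parts: the asymptotic of $\IE[\NN_p[f]]$, the asymptotic of $N_p$, and the convergence in probability. All three rest on the on-diagonal behavior of the partial Bergman kernel from \cref{faroffdiagprop} and \cref{asydiag}, together with the variance formula from \cref{ExpVarprop}.

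\textbf{Expectation and dimension.} By \cref{ExpVarprop}, one has
\[
\IE[\NN_p[f]]=\int_X P_p^{(-)}(x,x)\,f(x)\,dv_X(x),
\]
which is bounded in absolute value by $\|f\|_\infty N_p$ with $N_p=\Tr P_p^{(-)}<\infty$ by \cref{QR=0}, giving the finiteness claim. For the asymptotic, fix $\alpha\in(0,1)$ and partition $X=B_p^-\sqcup S_p\sqcup B_p^+$ with $B_p^\pm=\{\pm\mu>\alpha p^{(-1+\alpha)/2}\}$ and $S_p=\{|\mu|\leq\alpha p^{(-1+\alpha)/2}\}$. On $B_p^-$ the third estimate of \cref{faroffdiagprop} combined with \cref{asydiag} yields $P_p^{(-)}(x,x)=p^n+O(p^{n-1})$ uniformly, and since $\mu$ is proper and bounded below, $\{\mu<0\}$ is relatively compact, so this region contributes $p^n\int_{\{\mu<0\}}f\,dv_X+o(p^n)$. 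On $B_p^+$ the second estimate of \cref{faroffdiagprop} gives $|P_p^{(-)}(x,x)|\leq C_k p^{-k}|\xi_x|^k$; using the identity $|\xi_x|=|d\mu_x|$ (a consequence of \cref{gTX} and \cref{momentfla}) and the polynomial growth condition \cref{volgrowth}, a sum over level sets $\{q\leq\mu<q+1\}$ exactly as in \cref{Npcomput0} from the proof of \cref{QR=0} shows that the outer contribution is $O(p^{-\infty})$. Finally, $S_p$ is a tubular neighborhood of the compact smooth hypersurface $\mu^{-1}(0)$, smooth by \cref{dmuJxi=1}, hence has volume $O(p^{(-1+\alpha)/2})$; since $P_p^{(-)}(x,x)=O(p^n)$ on $S_p$ (either from \cref{partBergasyth} or from the bound inherited from $P_p$ via \cref{asydiag,faroffdiagprop}), its contribution is $o(p^n)$, proving \cref{Expfla}. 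Applying the same argument with $f\equiv 1$ to $N_p=\int_X P_p^{(-)}(x,x)\,dv_X(x)$ yields $N_p=p^n\,\Vol(\{\mu<0\})+o(p^n)$.

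\textbf{Convergence in probability.} Decompose
\[
\frac{1}{N_p}\NN_p[f]-\frac{1}{\Vol(\{\mu<0\})}\int_{\{\mu<0\}}f\,dv_X=\frac{\NN_p[f]-\IE[\NN_p[f]]}{N_p}+\rho_p,
\]
where $\rho_p$ is deterministic and tends to $0$ by the previous step. For the random term, the bound $(f(x)-f(y))^2\leq 4\|f\|_\infty^2$ inserted into the variance formula of \cref{ExpVarprop} yields
\[
\IV[\NN_p[f]]\leq 2\|f\|_\infty^2\int_X\int_X\,\bigl|P_p^{(-)}(x,y)\bigr|_p^2\,dv_X(x)\,dv_X(y)=2\|f\|_\infty^2\,\Tr(P_p^{(-)})=2\|f\|_\infty^2\,N_p,
\]
since $P_p^{(-)}$ is an orthogonal projection of rank $N_p$. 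Hence $\IV[\NN_p[f]/N_p]=O(N_p^{-1})=O(p^{-n})$, and Chebyshev's inequality delivers \cref{LLNfla1}; for continuous compactly supported $f$, where $\rho_p$ can be absorbed into any fixed $\epsilon$ for $p$ large, the same Chebyshev bound gives the concentration estimate \cref{LLNfla2}.

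\textbf{Main obstacle.} The principal technical difficulty lies in controlling the integrals over the outer region $B_p^+$, which has infinite volume in general. This is precisely the role of the polynomial growth assumption \cref{volgrowth}: combined with the rapid decay estimate of \cref{oscvener} inherited by $P_p^{(-)}$ in \cref{faroffdiagprop}, and with the identity $|\xi_x|=|d\mu_x|$, it allows summation over level sets of $\mu$ in full parallel with the proof of \cref{QR=0}. A secondary subtlety is the handling of the transition strip $S_p$, where neither the bulk expansion nor the tail decay is effective; this is controlled purely by volume, thanks to $|d\mu|\neq 0$ on the compact hypersurface $\mu^{-1}(0)$.
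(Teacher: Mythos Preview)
Your argument for the expectation asymptotic \cref{Expfla} and the convergence in probability \cref{LLNfla1} is essentially the paper's own: the same three-region decomposition of $X$ according to the value of $\mu$, the same use of \cref{faroffdiagprop} and \cref{asydiag} in the bulk, the same level-set summation in the outer region (the paper's \cref{Npcomput2}--\cref{intfaroff}), the same crude variance bound $\IV[\NN_p[f]]\leq 2\|f\|_\infty^2 N_p$ followed by Chebyshev. One small imprecision: on the infinite-volume region $B_p^+$ the \emph{stated} bound of \cref{faroffdiagprop} is $C_k p^{-k}|\xi_x|^k$, which carries no decay in $\mu(x)$ and is therefore not summable over level sets by itself; you need to go back to the weight decomposition and \cref{oscvener}, which supplies the extra factor $|\mu(x)-m/p|^{-k}$. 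The paper does exactly this (compare \cref{Npcomput3} with \cref{Npcomput3pre}), and your references to \cref{oscvener} and to \cref{Npcomput0} show you have the right mechanism in mind, but your one-line description does not quite match what you cite.

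The real gap is in \cref{LLNfla2}. You claim that ``the same Chebyshev bound'' delivers it, but the crude estimate $\IV[N_p^{-1}\NN_p[f]]=O(p^{-n})$ only yields $\IP(\ldots>\epsilon)\leq C/(\epsilon^2 p^n)$, which does \emph{not} imply the stated $C/(\epsilon p^n)$ for small $\epsilon$. This is precisely where the paper uses the extra hypothesis that $f$ is continuous with compact support: it carries out a refined variance computation (the paper's \cref{Varcomput}--\cref{Varcomput3}) in which the near-diagonal concentration of $|P_p(x,y)|^2$ from \cref{theta} is combined with the uniform continuity of $f$ to show that for every $\delta>0$ one has $\IV[N_p^{-1}\NN_p[f]]\leq C\delta\,p^{-n}$ for $p$ large. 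Choosing $\delta$ adapted to $\epsilon$ in Chebyshev then produces the $1/\epsilon$ dependence. Your argument never invokes continuity or compact support of $f$ in any essential way, so it cannot recover this sharper bound; you should insert the refined variance step to close the proof of \cref{LLNfla2}.
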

\begin{proof}

Recall that $\mu\in\cinf(X,\R)$ has polynomial growth in the
sense of \cref{volgrowth},
let $\epsilon_0>0$ be as in definition \cref{psidef}
of the smooth family of charts used in
\cref{coordsec,partBergsec}, and pick
$\epsilon\in\,]0,\epsilon_0[$. Recall also that
for any $p\in\N$ and all $x\in X$, we have $P_p(x,x)>0$
through the canonical identification
$L^p_x\otimes(L^p_x)^*\simeq\C$.
By \cref{ExpVarprop}, for any $p\in\N$
and any positive function $f\in L^\infty(X,\R)$, we have
the following identity in $[0,+\infty]$,
\begin{multline}\label{Npcomput}
\IE[\NN_p[f]]=\int_X\,P_p^{(-)}(x,x)\,f(x)\,dv_X(x)\\
=\int_{\{\mu<-\ept\}}\,P_p^{(-)}(x,x)\,f(x)\,dv_X(x)+
\int_{\{|\mu|\leq\ept\}}\,P_p^{(-)}(x,x)\,f(x)\,dv_X(x)\\
+
\int_{\{\mu>\ept\}}\,P_p^{(-)}(x,x)\,f(x)\,dv_X(x)\,.
\end{multline}
We will show that all terms of \cref{Npcomput}
are finite, and that
the last one is negligible as $p\to\infty$.
Let us first deal with the last term.
By \cref{faroffdiagprop} and the properness
of $\mu\in\cinf(X,\R)$, for any $k\in\N$,
we get a constant $C_k>0$ such that for any $p\in\N$, we have
\begin{equation}\label{Npcomput2}
\int_{\{\ept<\mu<2\}}\,P_p^{(-)}(x,x)\,f(x)\,dv_X(x)
\leq C_k\,p^{-k}\,\sup\limits_{x\in X}f(x)\,,
\end{equation}
while on the other hand,
using \cref{QR=0},
we can apply \cref{oscvener} as in formula \cref{Npcomput3pre}
to show that there exist $M,\,N\in\N$ and $\delta>0$
such that for any $q,\,p\in\N$ satisfying $q\geq 1$, we have
\begin{equation}\label{Npcomput3}
\begin{split}
\int_{\{q\leq \mu<q+1\}}\,P_p^{(-)}(x,x)\,f(x)\,dv_X(x)
&=\sum_{m= -p\,M}^{m=0}\,\int_{\{q\leq \mu<q+1\}}\,P_p^{(m)}(x,x)
\,f(x)\,dv_X(x)\\
&\leq M\,C_k\,p^{-k+1}
(q+1)^{N} q^{-\delta k}\,\sup\limits_{x\in X}f(x),
\end{split}
\end{equation}
Taking $k\in\N$ large enough and summing  \cref{Npcomput2}
with \cref{Npcomput3} over all $q\in\N$ satisfying $q>2$,
for any $k\in\N$, we get a constant $C_k>0$ such that for
any $p\in\N$, we have the following estimate,
\begin{equation}\label{intfaroff}
\int_{\{\mu>\ept\}}\,P_p^{(-)}(x,x)\,f(x)\,dv_X(x)\leq C_k\,
p^{-k}\,\sup\limits_{x\in X}f(x)\,.
\end{equation}
This implies that the third term of
formula \eqref{Npcomput} for the expectation
$\IE[\NN_p[f]]\in[0,+\infty]$ is negligible
as $p\to\infty$, and since $\{\mu<\ept\}\subset X$
is compact by \cref{volgrowth}, it
implies in particular that the expectation
$\IE[\NN_p[f]]>0$ is finite. 

To deal with the second term of \cref{Npcomput},
recall that $\{|\mu|\leq\ept\}\subset X$
is compact and
formed of regular values of $\mu\in\cinf(X,\R)$
by definition of $\epsilon>0$ from
\cref{phidef,psidef}. Then
\cref{partBergasyth}
implies
the existence of a constant
$C>0$ such that for all $p\in\N$ and all $x\in X$,
we have 
\begin{equation}\label{intedge}
\int_{\{|\mu|\leq\ept\}}\,P_p^{(-)}(x,x)\,f(x)\,dv_X(x)\leq\,C\,p^{n}\ptheta\,\sup\limits_{x\in X}f(x)\,,
\end{equation}
Hence from \cref{asydiag}, \cref{faroffdiagprop}
and the estimate \cref{intfaroff}
for the third term of
\cref{Npcomput},
we get the following asymptotic estimate as $p\to\infty$,
\begin{equation}\label{Npcomputfinal}
\begin{split}
p^{-n}\,\IE[\NN_p[f]]
&=p^{-n}\int_{\{\mu<-\ept\}}\,P_p(x,x)\,f(x)\,dv_X(x)+O(\ptheta)\\
&=\int_{\{\mu<-\ept\}}\,f(x)\,dv_X(x)+O(\ptheta)\,.
\end{split}
\end{equation}
Using once more that $\{|\mu|\leq\ept\}\subset X$
is compact and formed of regular values of
$\mu\in\cinf(X,\R)$, this
implies the asymptotics \cref{Expfla} for the
expectation of the linear statistics \cref{linstat}
of a bounded positive function $f\in L^\infty(X,\R)$,
hence of any $f\in L^\infty(X,\R)$ by linearity.
%
%
%

Let us now establish the law of large numbers
\eqref{LLNfla1}.
Using the fact that for all $p\in\N$, we have
$N_p=\IE[\NN_p[f]]$ by definition of the linear statistics
\cref{linstat},
formula \cref{Expfla} gives
\begin{equation}
\IE\left[\frac{1}{N_p}\NN_p[f]\right]
\xrightarrow{~p\to\infty~}~\frac{1}{\Vol\left(\{\mu<0\}\right)}\int_{\{\mu<0\}}\,f\,dv_X\,.
\end{equation}
On the other hand, the identity for the 
variance of $\NN_p[f]$ given in \cref{ExpVarprop}
together with
elementary properties of the smooth Schwartz
kernel of the orthogonal projection $P_p^{(-)}:L^2(X,L^p)\to\HH_p$ gives
the following estimates for all $p\in\N$,
\begin{equation}
\begin{split}
\IV[\NN_p[f]]&=\frac{1}{2}\int_X\int_X\,\left|P_p^{(-)}(x,y)\right|^2_p
\,(f(x)-f(y))^2\,dv_X(x)\,dv_X(y)\\
&\leq 2\,\sup_{x\in X}\,|f(x)|^2\,\int_X\int_X\,P_p^{(-)}(x,y).P_p^{(-)}(y,x)
\,dv_X(y)\,dv_X(x)\\
&\leq 2\,\sup_{x\in X}\,|f(x)|^2\,\int_X\,P_p^{(-)}(x,x)
\,dv_X(x)\leq 2\,\sup_{x\in X}\,|f(x)|^2\,N_p\,.
\end{split}
\end{equation}
Together with formula \cref{Expfla} applied to
$N_p=\IE[\NN_p[1]]$ and elementary properties
of the variance,
this implies that the existence of a constant
$C>0$ such that for all $p\in\N$, we have
\begin{equation}
\begin{split}
\IV\left[\frac{1}{N_p}\NN_p[f]\right]&=\frac{1}{N_p^2}\IV\left[\NN_p[f]\right]\\
&\leq C\,p^{-n}\,.
\end{split}
\end{equation}
The convergence in probability \cref{LLNfla1} then follows from the classical Chebyshev inequality, as in the standard proof of the weak law of large numbers
for random variables with finite variance.

To get the more precise result \cref{LLNfla2} when
$f\in\CC^0_c(X,\R)$ is continuous
with compact support,
let us use the identity for the 
variance of $\NN_p[f]$ given in \cref{ExpVarprop} to
split it into two parts in the following way,
for all $p\in\N$,
\begin{equation}\label{Varcomput}
\begin{split}
&\IV\left[\NN_p[f]\right]
=\frac{1}{2}\int_X\int_X\,|P_p^{(-)}(x,y)|^2_p\,(f(x)-f(y))^2\,dv_X(x)\,dv_X(y)\\
&=\frac{1}{2}\int_{\{\mu<\ept\}}\int_X\,|P_p^{(-)}(x,y)|^2_p\,(f(x)-f(y))^2\,dv_X(x)\,dv_X(y)\\
&+
\frac{1}{2}\int_{\{\mu>\ept\}}\int_X\,|P_p^{(-)}(x,y)|_p^2\,(f(x)-f(y))^2\,dv_X(x)\,dv_X(y)\,.
\end{split}
\end{equation}
To deal with the second term, we can apply the estimate
\cref{intfaroff} and use elementary properties of the Schwartz
kernel of the orthogonal projection $P_p^{(-)}:L^2(X,L^p)\to\HH_p$
to get for any $k\in\N$ a constant $C_k>0$ such that
for all $p\in\N$, we have
\begin{equation}\label{Varcomput2}
\begin{split}
&\frac{1}{2}\int_{\{\mu>\ept\}}\int_X\,|P_p^{(-)}(x,y)|_p^2\,(f(x)-f(y))^2\,dv_X(x)\,dv_X(y)\\
&\leq\, 2\,\sup_{x\in X}\,|f(x)|^2\,\int_{\{\mu>\ept\}}\left(\int_X\,P_p^{(-)}(x,y).P_p^{(-)}(y,x)
\,dv_X(y)\right)\,dv_X(x)\\
&\leq\, 2\,\sup_{x\in X}\,|f(x)|^2\,\int_{\{\mu>\ept\}}\,P_p^{(-)}(x,x)\,dv_X(x)
\leq 2\,C_k\,p^{-k}\,\sup_{x\in X}\,|f(x)|^2\,.
\end{split}
\end{equation}
%
On the other hand, for any $x\in X$,
we can apply \cref{theta} and use the uniform continuity of
$f\in\CC^0_c(X,\R)$ over its compact support
to get, for any $\delta>0$ and $k\in\N$, the existence of
constants $C,\,C_k>0$ and $p_0\in\N$
such that for any $p\geq p_0$ and $x\in X$, we have
\begin{multline}\label{BulkLLN}
\frac{1}{2}\int_{X}\,|P_p(x,y)|^2_p\,(f(x)-f(y))^2\,dv_X(y)=\frac{1}{2}\,
\int_{{B^X(x,\ept)}}\,|P_p(x,y)|^2_p\,(f(x)-f(y))^2\,dv_X(y)\\
+\frac{1}{2}\,\int_{X\backslash {B^X(x,\ept)}}\,|P_p(x,y)|^2_p\,(f(x)-f(y))^2\,dv_X(y)\\
\leq\frac{1}{2}
\Bigg(\sup_{y\in B^X(x,\ept)}\,|f(x)-f(y)|^2\Bigg)
\int_{X}\,|P_p(x,y)|^2_p\,
\,dv_X(y)\\
+C_k\,p^{-k}\int_X\,|P_p(x,y)|_p\,|f(x)-f(y)|^2\,dv_X(y)\\
\leq\delta\,P_p(x,x)+C\,C_k\,p^{n-k}\,2\,\sup_{x\in X}\,|f(x)|^2\,\Vol(\Supp\,f)\\
\leq Cp^n\left(\delta+C_k\,p^{-k}\,2\,\sup_{x\in X}\,|f(x)|^2\,\Vol(\Supp\,f)\,\right)\,.
\end{multline}
Recalling that $\mu\in\cinf(X,\R)$ is proper and bounded from below,
using formula \cref{Expfla} applied to
$N_p=\IE[\NN_p[1]]$ and elementary properties of
the variance,
we can then plug the estimates \cref{BulkLLN,Varcomput2}
inside \cref{Varcomput} to get a constant $C>0$
such that for any
$\delta>0$, there exists $p_0\in\N$ such that for all $p\geq p_0$,
we have
\begin{equation}\label{Varcomput3}
\IV\left[\frac{1}{N_p}\NN_p[f]\right]
\leq C\,p^{-n}\Vol(\{\mu<\epsilon\})
\,\delta\,.
\end{equation}
The inequality \cref{LLNfla2} in then a direct consequence
of the classical Chebyshev inequality.

\end{proof}

\subsection{Central Limit Theorem}
\label{CLTsec}


Consider the setting of \cref{Gensec}, and for any
$f\in L^\infty(X,\R)$,
let us consider the linear statistics \cref{linstat}
as a random variable with respect to the determinantal
point process introduced in \cref{DPPdef}.
Together with \cref{LLN}, the following
result concludes the proof of \cref{mainth}.

\begin{theorem}\label{Varth}
For any $f\in\cinf_c(X,\R)$, the variance of the associated linear statistics
\cref{linstat} satisfies the following asymptotics as $p\to\infty$,
\begin{equation}\label{Varfla}
\lim\limits_{p\to\infty}\,p^{-n+1}
\IV\left[\NN_p[f]\right]=
\frac{1}{4\pi}\int_{\{\mu<0\}}\,|df|^2\,dv_X(x)+
\frac{1}{2}\int_{X_0}\,\sum_{k\in\Z}\,|k|\,
|\hat{f}_k(x)|^2\,dv_{X_0}(x)\,,
\end{equation}
and the random variable $N_p^{\alpha}(\NN_p[f]-\IE[\NN_p[f]])$
with $\alpha=\frac{1}{2n}-\frac{1}{2}$
converges in distribution to a centered
normal random variable with variance \cref{Varfla}
as $p\to\infty$.
\end{theorem}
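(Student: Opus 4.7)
The plan is to start from the exact integral formula for $\IV[\NN_p[f]]$ given in \cref{ExpVarprop} and partition $X$ according to the value of the Kostant moment map $\mu$, into a bulk region $B_p := \{\mu < -\epsilon p^{(-1+\epsilon)/2}\}$, a boundary layer $E_p := \{|\mu|\leq\epsilon p^{(-1+\epsilon)/2}\}$ of thickness of order $p^{-1/2}$, and an exterior region. By \cref{faroffdiagprop} the partial Bergman kernel is $O(p^{-\infty})$ outside the support of $f$ on the exterior side, and the polynomial growth assumption (\cref{volgrowth}) of $\mu$ makes the exterior contribution negligible. The analysis therefore reduces to the bulk and boundary-layer pieces of the variance, which will produce respectively the $H^1$ term and the $H^{1/2}$ term in \eqref{Varfla}.

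For the bulk contribution, \cref{faroffdiagprop} identifies $P_p^{(-)}$ with the full Bergman kernel $P_p$ up to $O(p^{-\infty})$. \cref{thetagal} localizes the double integral to pairs $(x,y)$ with $d^X(x,y) < \epsilon p^{(-1+\epsilon)/2}$; after the rescaling $y = \psi_x(Z/\sqrt{p})$ the near-diagonal expansion \cref{asy} reduces the leading-order integrand to the model Gaussian $p^{2n}|\PP_x(Z,0)|^2 = p^{2n}e^{-\pi|Z|^2}$ multiplied by $|f(\psi_x(Z/\sqrt{p})) - f(x)|^2 = p^{-1}|df_x(Z)|^2 + O(p^{-3/2})$. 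The elementary Gaussian moment computation then yields the factor $\frac{1}{4\pi}|df_x|^2$ at order $p^{n-1}$, producing $\frac{p^{n-1}}{4\pi}\int_{\{\mu<0\}}|df|^2\,dv_X$. This follows the strategy of Berman \cite{Ber18} and Charles--Estienne \cite{CE20}, adapted verbatim to the partial Bergman setting.

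For the boundary layer, I use the tubular chart \eqref{eqemb} writing points as $\Phi(t,u,x_0)$ with $t \in S^1$, $u$ the normal coordinate satisfying $\mu \circ \Phi = u$, and $x_0 \in V_0$ parametrizing a neighborhood in the reduction $X_0$. After the rescaling $u = v/\sqrt{p}$, $u' = v'/\sqrt{p}$, \cref{partBergasyth} gives the explicit leading-order form \eqref{partBerglocmod} for $p^{-n}P_p^{(-)}$. Expanding $f$ into its Fourier series along each $S^1$-fibre as $f(\varphi_t(x)) = \sum_{k\in\Z}\hat{f}_k(x)\,e^{-2\pi i kt}$, the double $S^1$-integration diagonalizes by Parseval; for each mode $k$ the problem reduces to a universal one-dimensional integral on $\R^2$ against $|\PP_{x_0}^{(-)}|^2$ in the rescaled normal variables, and the identity $\int_\R s^{-2}\sin^2(\pi k s)\,ds = \pi^2|k|$ (or an equivalent explicit integration) produces the $|k|$ factor characteristic of the $H^{1/2}$-norm. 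Integrating over $x_0 \in X_0$ then yields the second term of \eqref{Varfla}. The main obstacle is precisely this boundary-layer computation: unlike in the bulk, the kernel \eqref{partBerglocmod} is not translation-invariant in $u$, the error-function factor depending on $(u+u')/2$ rather than on the difference $u-u'$, so the integrand cannot be reduced to a pure difference quotient of $f$. A delicate Fourier-side analysis, combined with the subleading correction $p^{-1/2}\PP^{(0)}$ in \eqref{partBergasyfla} to regularize the $k=0$ mode and to absorb boundary cross-terms, is needed to extract the clean coefficient $|k|$.

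With the variance asymptotics in hand, the central limit theorem follows from Soshnikov's general criterion \cite[Th.\,1]{Sos02}: for any determinantal point process given by a projection kernel, the centered linear statistic normalized by the square root of its variance converges to a standard Gaussian whenever the variance diverges. Since $N_p \sim p^n\,\Vol(\{\mu<0\})$ by \cref{LLN} and $\IV[\NN_p[f]] \sim p^{n-1}V$ with $V$ the right-hand side of \eqref{Varfla}, the choice $\alpha = \frac{1}{2n} - \frac{1}{2}$ is precisely the one for which $2n\alpha + (n-1) = 0$, so that $N_p^{2\alpha}\IV[\NN_p[f]] \to V$ as $p\to\infty$. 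Applying Soshnikov's theorem to the rescaled statistic then yields the desired convergence in distribution to a centered Gaussian with variance \eqref{Varfla}, completing the proof.
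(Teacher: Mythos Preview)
Your overall architecture — splitting the variance integral from \cref{ExpVarprop} into bulk, boundary layer, and exterior, handling the exterior via \cref{faroffdiagprop} and \cref{volgrowth}, reducing the bulk to the Gaussian moment $\int e^{-\pi|Z|^2}Z_j^2\,dZ=\frac{1}{2\pi}$ through \cref{asy}, and deducing the CLT from Soshnikov — matches the paper exactly, and your arithmetic for the exponent $\alpha$ is correct.

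The gap is in the boundary-layer step. You propose to use the partial Bergman kernel expansion \cref{partBergasyth} and its local model \eqref{partBerglocmod} directly, but that expansion is stated only for $Z,Z'\in T_x^HX$, i.e.\ in directions \emph{orthogonal} to the $S^1$-orbit. It gives no information about $P_p^{(-)}(\varphi_t(x),\varphi_{t'}(y))$ as a function of $t,t'$, which is exactly what the double $S^1$-integration requires. By the diagonal $S^1$-invariance one can reduce to a single variable $s=t-t'$, but $|P_p^{(-)}(\varphi_s(x),y)|_p^2=\big|\sum_{m}e^{2\pi i sm}P_p^{(m)}(x,y)\big|_p^2$, so the $s$-Fourier coefficients are the cross-products $\sum_m P_p^{(m)}(x,y)P_p^{(m+k)}(y,x)$, not anything expressible through $|\PP^{(-)}_x|^2$. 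Consequently your claim that ``for each mode $k$ the problem reduces to a universal one-dimensional integral against $|\PP_{x_0}^{(-)}|^2$'' is not right, and the identity $\int_\R s^{-2}\sin^2(\pi ks)\,ds=\pi^2|k|$ does not appear in this computation.

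What the paper actually does for the boundary term is to carry out the Fourier expansion of $(f(\varphi_t(x))-f(\varphi_{t'}(y)))^2$ first, obtaining sums of the form $\sum_{m}\big(P_p^{(m)}(x,y)P_p^{(m+k)}(y,x)-|P_p^{(m)}(x,y)|_p^2\big)\hat f_k(x)\overline{\hat f_k(y)}$, and then to insert the \emph{equivariant} local model $\PP_x^{(m)}$ from \cref{wghtedasyth} rather than the partial model $\PP_x^{(-)}$. The coefficient $|k|$ emerges from an Euler--Maclaurin computation on the Gaussian sum
\[
\sum_{m\geq 0}\Big(e^{-2\pi[(v-\frac{m}{a\sqrt p})^2+(v-\frac{m-k}{a\sqrt p})^2]}-e^{-4\pi(v-\frac{m}{a\sqrt p})^2}\Big)=\tfrac{k}{2}\,e^{-4\pi v^2}+k^2\,O(p^{-1/2}),
\]
uniformly in $v$; the subleading correction $\PP^{(0)}$ from \cref{partBergasyth} plays no role here. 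So the missing ingredient in your outline is precisely \cref{wghtedasyth} and this Euler--Maclaurin step — without them the boundary contribution cannot be evaluated.
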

\begin{proof}
Recall that $\mu\in\cinf(X,\R)$ has polynomial growth in the
sense of \cref{volgrowth},
and fix $f\in\cinf_c(X,\R)$.
Let $\epsilon_0>0$ be as in definition \cref{psidef}
of the smooth family of charts used in
\cref{coordsec,partBergsec}, and pick
$\epsilon\in\,]0,\epsilon_0[$.
Recalling the decomposition \cref{Varcomput} of the formula
for the variance of $\NN_p[f]$ given by \cref{ExpVarprop},
the estimate \cref{Varcomput2}
implies the following asymptotics as $p\to\infty$,
\begin{multline}\label{Vardec}
p^{-n+1}\,\IV\left[\NN_p[f]\right]=
\frac{1}{2}\,p^{-n+1}\int_{\{\mu<-\ept \}}\int_X\,|P_p^{(-)}(x,y)|^2_p\,(f(x)-f(y))^2\,dv_X(x)\,dv_X(y)\\
+
\frac{1}{2}\,p^{-n+1}\int_{\{|\mu|\leq\ept\}}\int_X\,|P_p^{(-)}(x,y)|_p^2\,(f(x)-f(y))^2\,dv_X(x)\,dv_X(y)+O(p^{-\infty})\,.
\end{multline}
We claim that the first and second terms of the decomposition \cref{Vardec}
respectively converge to the first and second terms of the limit \cref{Varfla}
as $p\to\infty$.

To deal with the first term of decomposition \cref{Vardec},
first note from \cref{faroffdiagprop} that, since
$f\in\cinf_c(X,\R)$ has compact support, we have the
following estimate as $p\to\infty$,
\begin{multline}\label{CLTcomput1}
\int_{\{\mu<-\ept \}}\int_X\,|P_p^{(-)}(x,y)|^2_p\,(f(x)-f(y))^2\,dv_X(x)\,dv_X(y)\\
=\int_{\{\mu<-\ept \}}\int_X\,|P_p(x,y)|^2_p\,(f(x)-f(y))^2\,dv_X(x)\,dv_X(y)+O(p^{-\infty})\,.
\end{multline}
Consider now a bounded family of charts
$\psi:B^{TX}(0,\epsilon_0)\to X$ in the sense of
\cref{chart}, and note that
the Riemannian volume
form satisfies $\psi_x^*dv_X=(1+O(|Z|))\,dZ$
uniformly in $x\in\Supp\,f$
for all $Z\in T_xX$ satisfying $|Z|<\epsilon_0$,
where $dZ$ is the Lebesgue measure of $(T_xX,|\cdot|)$.
Let now $x\in X$, pick an orthonormal basis
$(e_j\in T_xX)_{j=1}^n$, and for any $Z\in T_xX$, write
$Z=\sum_{j=1}^n\,Z_j\,e_j$ with $Z_j\in\R$ for all $1\leq j\leq n$.
We can then apply \cref{theta} as in \cref{thetagal}
together with \cref{asy} to get $\epsilon_0>0$ such that
for all $\epsilon\in\,]0,\epsilon_0[$,
the following estimates hold as $p\to\infty$,
\begin{equation}\label{CLTcomput2}
\begin{split}
&\frac{1}{2}\,p^{-n+1}\int_{X}\,|P_p(x,y)|^2_p\,(f(x)-f(y))^2\,dv_X(y)\\
&=
\frac{1}{2}\,p^{-n+1}\int_{B^{T_xX}(0,\ept)}\,|P_{p,x}(0,Z)|^2_p
\,(f_x(0)-f_x(Z))^2\,dv_X(y)+O(p^{-\infty})\\
&=\frac{1}{2}\,p^{n+1}\int_{B^{T_xX}(0,\eptt)}\,
|P_{p,x}(0,Z/\sqrt{p})|^2_p
\,\left(\sum_{j=1}^{2n} \Big(\frac{Z_j}{\sqrt{p}}\,\Dkk{}{f_x}{Z_j}(0)\Big)^2+O(|Z/\sqrt{p}|^3)\right)\,dZ+
O(p^{-\infty})\\
&=\frac{1}{2}\,\sum_{j=1}^{2n}\Big(\Dkk{}{f_x}{Z_j}(0)\Big)^2\int_{\R^{2n}}\,\exp(-\pi\,|Z|^2)\,Z_j^2\,dZ+o(1)=\frac{1}{4\pi}\,|df_x|^2+o(1)\,,
\end{split}
\end{equation}
and this estimate is uniform in
$x\in X$ since $f\in\cinf_c(X,\R)$ has compact support.
Using further that for all $\epsilon\in\,]0,\epsilon_0[$
and $p\in\N$, the subset $\{|\mu|\leq\ept\}\subset X$
is compact and formed of regular values of
$\mu\in\cinf(X,\R)$, we deduce from \cref{CLTcomput1}
and \cref{CLTcomput2}
the following estimates for
the first term in the decomposition \cref{Vardec}
as $p\to\infty$,
\begin{multline}\label{termtrivial}
\frac{1}{2}\,p^{-n+1}\int_{\{\mu<-\ept \}}\int_X\,|P_p^{(-)}(x,y)|^2_p\,(f(x)-f(y))^2\,dv_X(x)\,dv_X(y)\\
=\frac{1}{4\pi}\int_{\{\mu<0\}}|df|^2\,dv_X(x)+o(1)\,.
\end{multline}
This identifies the first term of the right-hand side of
the asymptotics
\cref{Vardec} with the first term of the right-hand side of
the asymptotics \cref{Varfla}.

Let us now focus on the second term in the decomposition \cref{Vardec}
for the variance. 
To that end, let $x_0\in\mu^{-1}(0)$, and
recall the local section $V_0\subset\mu^{-1}(0)$
of the $S^1$-principal bundle $\pi:\mu^{-1}(0)\to X_0:=\mu^{-1}(0)/S^1$ defined in \cref{BNdef}.
Then the image $V\subset X$ of the embedding
$\phi:I_0\times V_0\to X$ defined by \cref{phidef}
is a local section of the quotient map
\begin{equation}\label{princbdle}
\pi:U\to B:=U/S^1\,,
\end{equation}
where the open set $U\subset X$ is the image of the
open embedding
$\Phi:S^1\times I_0\times V_0\to X$ defined by \cref{eqemb}.
From the definition \cref{THX} of the horizontal
tangent bundle, we see that for any $x\in U$,
the differential
$d\pi_x:T_xX\to T_{\pi(x)}B$ induces an isomorphism
$T_x^HX\simeq T_{\pi(x)}B$ by restriction.
Since the $S^1$-action preserves the Riemannian
metric $g^{TX}$, there is a unique Riemannian metric $g^{B}$ on $B$
such that $\pi^*g^{B}=g^{TX}|_{T^HX}$, and
recalling the fundamental vector field \cref{xidef},
the volume form $dv_{B}$ of $(B,g^{B})$ satisfies
\begin{equation}\label{dvX=dtdvB}
dv_X=|\xi|\,dt\,\pi^*dv_{B}\,,
\end{equation}
where $dt$ is the Lebesgue measure of $S^1\simeq\R/\Z$.

Let now $\varrho\in\cinf(X_0,\R)$ have compact support in
$\pi(V_0)\subset X_0$, and let
$\widetilde{\varrho}\in\cinf_c(X,\R)$ be
the $S^1$-invariant function with compact
support in $U\subset X$
defined
for
any $t\in S^1$, $u\in I_0$ and $x\in V_0$ by
\begin{equation}\label{rhotilde}
\widetilde{\varrho}\,(\Phi(t,u,x)):=\varrho\,(\pi(x))\,.
\end{equation}
For any $\epsilon\in\,]0,\epsilon_0[$ , write
\begin{equation}\label{Veps}
V(\epsilon):=V\,\cap\,\{|\mu|\leq\epsilon\}\subset U\,.
\end{equation}
Then for any $\epsilon\in\,]0,\epsilon_0[$ and $p\in\N$,
since $S^1$ acts unitarily on $(L,h^L)$ via $\varphi_{t,p}:L^p\to L^p$ for all
$t\in S^1\simeq\R/\Z$,
we get that
\begin{multline}\label{Edgecomput1}
\int_{\{|\mu|\leq\ept\}}
\int_X\,|P_p^{(-)}(x,y)|^2_p\,(f(x)-f(y))^2
\,\widetilde{\varrho}(x)\,dv_X(x)\,dv_X(y)\\
=\int_{V(\ept)}\int_V\,
\int_{S^1}\int_{S^1}|\varphi_{t,p}^{-1} P_p^{(-)}(\varphi_t(x),\varphi_u(y))\varphi_{u,p}|^2_p\\
(f(\varphi_t(x))-f(\varphi_u(y)))^2\,
\widetilde\varrho(x)
\,|\xi_x|\,|\xi_y|\,dt\,du\,\pi^*dv_{B}(x)\,\pi^*dv_{B}(y)
\,.
\end{multline}
Now for
all $x,\,y\in X$ and $t,\,u\in\R$, through the canonical isomorphism 
$L^p_x\otimes(L^p_x)^*\simeq\C$,
\cref{Pjprop,QR=0} imply the existence of $M\in\N$ such that
\begin{multline}
|\varphi_{t,p}^{-1} P_p^{(-)}(\varphi_t(x),\varphi_u(y))\varphi_{u,p}|^2_p\\
=
\left(\varphi_{t,p}^{-1} P_p^{(-)}(\varphi_t(x),\varphi_u(y))\varphi_{u,p}\right).\left(\varphi_{t,p}^{-1}\,P_p^{(-)}(\varphi_t(y),\varphi_u(x))\varphi_{u,p}\right)\\
=\sum_{m,\,r=-Mp}^{m,\,r=0}
e^{2\pi\sqrt{-1}t(m-r)}e^{2\pi\sqrt{-1}u(r-m)}\,P^{(m)}(x,y).P^{(r)}(y,x)\,.
\end{multline}
By standard Fourier theory from notation
\cref{Fouriercoeff} for the Fourier coefficients,
this implies that for any
functions $g,\,h\in\cinf(X,\C)$ and any $x,\,y\in U$, we have
\begin{multline}\label{Fourierdev}
\int_{S^1}\int_{S^1}|\varphi_{t,p}^{-1} P_p^{(-)}(\varphi_t(x),\varphi_s(y))\varphi_{u,p}|^2_p\,g(\varphi_t(x))\,h(\varphi_u(y))\,dt\,du\\
=\sum_{m,\,r=-Mp}^{m,\,r=0}\,P^{(m)}(x,y).P^{(r)}(y,x)\,\hat{g}_{r-m}(x)\,
\hat{h}_{m-r}(y)\,.
\end{multline}
Furthermore, since $f\in\cinf_c(X,\R)$
is smooth and compactly supported for any $r\in\Z$,
we have that
$\widehat{f^2_r}(x)=
\sum_{k\in\Z}\hat{f}_k(x)\hat{f}_{r-k}(x)$ with uniform
convergence in $x\in U$, and that
$\widehat{f}_{r}(x)=\overline{\widehat{f}_{-r}(x)}$ since $f$ assumes real values.
Then applying formula \cref{Fourierdev}
to each term of $(f(x)-f(y))^2=f(x)^2-2f(x)f(y)
+f(y)^2$ for any $x,\,y\in U$ and using
the fact from \cref{QR=0} that $P^{(m)}_p\equiv 0$ for all $m<-Mp$, one gets
\begin{multline}\label{Fourierdev2}
\int_{S^1}\int_{S^1}|\varphi_{t,p}^{-1} P_p^{(-)}(\varphi_t(x),\varphi_u(y))\varphi_{u,p}|^2_p\,\left(f(\varphi_t(x))-f(\varphi_u(y))\right)^2\,dt\,du\\
=\sum_{m=-Mp}^{m=0}\,|P^{(m)}(x,y)|_p^2\,\sum_{k\in\Z}\,\left(
|\hat{f}_k(x)|^2+|\hat{f}_k(y)|^2\right)\\
-2\sum_{m,\,r=-Mp}^{m,\,r=0}\,P^{(m)}(x,y).P^{(r)}(y,x)\,
\hat{f}_{r-m}(x)\,
\overline{\hat{f}_{m-r}(y)}\\
=\sum_{m=-Mp}^{m=0}\,|P^{(m)}(x,y)|_p^2\,\sum_{k\in\Z}\left|\hat{f}_k(x)
-\hat{f}_{k}(y)\right|^2\\
-2\sum_{k\leq 0}\sum_{m=-Mp}^{m=0}\left(\,
 P^{(m)}(x,y).P^{(m+k)}(y,x)-|P^{(m)}(x,y)|_p^2\right)\,\hat{f}_{k}(x)\,
\overline{\hat{f}_{k}(y)}\\
-2\sum_{k<0}\,\sum_{m=-Mp}^{m=0}\left(\,
 P^{(m+k)}(x,y).P^{(m)}(y,x)-|P^{(m)}(x,y)|_p^2\right)\,\hat{f}_{-k}(x)\,
\overline{\hat{f}_{-k}(y)}\,,
\end{multline}
where all sums in $k\in\Z$ converge uniformly in $x,\,y\in U$.
Plugging \cref{Fourierdev2} into the expression \cref{Edgecomput1}, we then get the following identities,
\begin{multline}\label{termscomput}
\frac{1}{2}\,p^{-n+1}\int_{\{|\mu|<\ept\}}
\int_X\,|P_p^{(-)}(x,y)|^2_p\,(f(x)-f(y))^2\,\widetilde\varrho(x)
\,dv_X(x)\,dv_X(y)\\
=p^{-n+1}\int_{V(\ept)}\int_V\,\Bigg(
\frac{1}{2}\sum_{m=-Mp}^{m=0}\,|P^{(m)}(x,y)|_p^2\,\sum_{k\in\Z}\left|\hat{f}_k(x)
-\hat{f}_{k}(y)\right|^2\\
-\sum_{k\leq 0}\sum_{m=-Mp}^{m=0}\left(\,
 P^{(m)}(x,y).P^{(m+k)}(y,x)-|P^{(m)}(x,y)|_p^2\right)\,\hat{f}_{-k}(x)\,
\overline{\hat{f}_{-k}(y)}\\
-\sum_{k<0}\,\sum_{m=-Mp}^{m=0}\left(\,
 P^{(m+k)}(x,y).P^{(m)}(y,x)-|P^{(m)}(x,y)|_p^2\right)\,\hat{f}_{k}(x)\,
\overline{\hat{f}_{k}(y)}\Bigg)\\
\widetilde\varrho(x)
\,|\xi_x|\,|\xi_y|\,\pi^*dv_{B}(x)\,\pi^*dv_{B}(y)\,.
\end{multline}
We will show that the integral of the first term in the
integrand of \cref{termscomput} is negligible as $p\to\infty$,
while the integrals of the two other terms lead to the boundary term in formula
\cref{Varfla}.
To do so, consider the trivialization
in normal coordinates described in \cref{coordsec},
let us fix $k\leq 0$, and recall that $\mu^{-1}(0)\subset X$
is compact by properness of $\mu\in\cinf(X,\R)$.
By a computation strictly analogous to
\cref{faroffdiagcomput} and using
the estimates \cref{P-sum2,sumcomput1},
%
\cref{wghtedasyth} implies that
for any $\delta>0$, there is $\epsilon_0>0$ such that
for all $\epsilon\in\,]0,\epsilon_0[$, we have
the following asymptotics as $p\to\infty$,
uniform in $x\in\mu^{-1}(0)$,in $k\in\Z$ satisfying $k\leq 0$
and in
$Z,\,Z'\in T^H_xX$ satisfying $|Z|,\,|Z'|<\epsilon\ptheta$,
\begin{equation}\label{finalcomput1}
\begin{split}
&p^{-2n+1}\sum_{m=-Mp}^{m=0}\,P^{(m)}_{p,x}(Z,Z')\,P^{(m+k)}_{p,x}(Z',Z)\\
&=\sum_{m\leq 0}\,
\PP^{(m)}_x(\sqrt{p}Z,\sqrt{p}Z')\PP^{(m+k)}_x(\sqrt{p}Z',\sqrt{p}Z)
+O(p^{-\frac{1}{2}+\delta})\,,
\end{split}
\end{equation}
Now, to deal with the first term of \cref{termscomput}, first note
from \cref{Fouriercoeff} that by standard Fourier theory,
we have the following uniform convergence in $x,\,y\in U$,
\begin{equation}
\sum_{k\in\Z}\left|\hat{f}_k(x)
-\hat{f}_{k}(y)\right|^2=\int_{0}^1\,\big(f(\varphi_t(x))-f(\varphi_t(y))\big)^2\,dt\,.
\end{equation}
Recall the bounded family of
charts \cref{psidef} and formula \cref{dvX=dtdvB}
for the volume form $dv_B$ of the base $(B,g^B)$
defined by \cref{princbdle}. Recalling
the embedding
$\phi:I_0\times V_0\xrightarrow{\sim}V\subset X$ defined by \cref{phidef},
Let $h\in\cinf(V,\R)$ be the positive function satisfying
%
\begin{equation}\label{volh}
\phi^*\pi^*dv_B|_V=:h\,du\,\pi^*dv_{X_0}\,,
\end{equation}
where $du$ is the Lebesgue measure of $\R$,
so that $h|_{V_0}\equiv 1$ by construction.
Finally, note from \cref{Veps} that for any $\epsilon\in\,]0,\epsilon_0[$, we have
\begin{equation}
V(\epsilon)=\{\psi_x(u\,e_1)\in X~|~x\in V_0,\,|u|<\epsilon\}\,.
\end{equation}
Setting $\epsilon'>4\,\epsilon\,\sup_{x\in\mu^{-1}(0)}|\xi_x|$,
using \cref{theta},
the asymptotics \cref{finalcomput1} with $k=0$ and the definition
\cref{rhotilde} of $\widetilde\varrho\in\cinf(X,\R)$,
we then get constants $C,\,C_1,\,C_2>0$ such that for all $p\in\N$,
the following estimates hold,
\begin{equation}\label{Vartermest1}
\begin{split}
&p^{-n+1}\int_{V(\ept)}\int_V
\sum_{m=-Mp}^{m=0}\,|P^{(m)}(x,y)|_p^2\\
&\int_{0}^1\,\big(f(\varphi_t(x))-f(\varphi_t(y))\big)^2\,dt\,
\widetilde\varrho(x)\,|\xi_x|\,|\xi_y|\,\pi^*dv_{B}(y)\,\pi^*dv_{B}(x)
\\
&=p^{-n+1}\int_{V_0}\varrho(\pi(x))\int_{-|\xi_x|\ept}^{|\xi_x|\ept}
\Big(\int_{B^{T^H_xX}(0,\epsilon'\ptheta)}
\sum_{m=-Mp}^{m=0}\,|P^{(m)}_{p,x}(u\,e_1,Z)|_p^2\\
&\int_{S^1}\,(\varphi_t^*f_{x}(u\,e_1)-\varphi_t^*f_{x}(Z))^2\,dt\,
|\xi_{\psi_x(u\,e_1)}|\,|\xi_{\psi_x(Z)}|\,h_x(u\,e_1)\,h_x(Z)
\,dZ\Big)du\,\pi^*dv_{X_0}(x)+O(p^{-\infty})\\
&\leq C p^{-2n+1} \int_{V_0}\varrho(\pi(x)) 
\int_{-|\xi_x|\eptt}^{|\xi_x|\eptt}\\
&\left(\int_{B^{T^H_xX}(0,\epsilon'\pttheta)}
\sum_{m=-Mp}^{m=0}\,|\PP^{(m)}_{x}((u/\sqrt{p})e_1,Z/\sqrt{p})|^2~\,\frac{|u\,e_1-Z|^2}{p}
\,dZ\right)du\,\pi^*dv_{X_0}(x)\\
&\leq  C_1\,p^{-\frac{1}{2}}\int_{V_0}\varrho(\pi(x))
\int_{-|\xi_x|\eptt}^{|\xi_x|\eptt}\Big(\int_{T^H_xX}
\exp\left(-\pi|u\,e_1-Z|^2\right)~\,|u\,e_1-Z|^2
\,dZ\Big)du\,\pi^*dv_{X_0}(x)\\
&\leq C_2\,\ptheta\,.
\end{split}
\end{equation}
This shows that the integral of the first term appearing in the
integrand of \cref{termscomput} tends to $0$ as $p\to\infty$.

To deal with the other terms in formula \cref{termscomput}, 
we will apply the Euler-Maclaurin
formula to the asymptotics \cref{finalcomput1}
as in the proof of \cref{partBergasyth}.
Namely, letting
$I\subset\,]0,+\infty[$ be a compact interval,
we compute the
following
Taylor
expansion in $\frac{k}{\sqrt{p}}$ in the Euler-Maclaurin
formula \cref{EM1}, uniform in $v\in\R$,
$k\in\Z$, $p\in\N$  and $a\in I$,
\begin{equation}\label{EM4}
\begin{split}
&\sum_{m\in\N} e^{-2\pi\left[\big(v-\frac{m}{a\sqrt{p}}\big)^2+\big(v-\frac{m-k}{a\sqrt{p}}\big)^2\right]}\\
&=\int_0^{\infty}
e^{-2\pi\left[\big(v-\frac{t}{a\sqrt{p}}\big)^2+\big(v-\frac{t-k}{a\sqrt{p}}\big)^2\right]}\,dt+a_0\,e^{-2\pi\left[v^2+\big(v-\frac{k}{a\sqrt{p}}\big)^2\right]}+O\Big(\frac{k}{\sqrt{p}}\Big)\\
&=a\sqrt{p}\int_{-\infty}^v e^{-4\pi t^2}\left(1-4\pi t\frac{k}{a\sqrt{p}}+
O\Big(\frac{k^2}{p}\Big)\right)\,dt+a_0\,e^{-4\pi v^2}
+O\Big(\frac{k}{\sqrt{p}}\Big)\\
&=a\sqrt{p}\int_{-\infty}^v\,e^{-4\pi t^2}\,dt+\frac{k}{2}\,
e^{-4\pi v^2}+a_0\,e^{-4\pi v^2}+k\,O\Big(\frac{k}{\sqrt{p}}\Big)\,.
\end{split}
\end{equation}
%
Substracting from formula \cref{EM4} the same formula
for $k=0$, we get the following asymptotics as
$p\to\infty$, uniform in $k\in\Z$, $v\in\R$ and $a\in I$,
\begin{equation}\label{EM4bis}
\sum_{m\in\N} \,e^{-2\pi\left[\big(v-\frac{m}{a\sqrt{p}}\big)^2+\big(v-\frac{m-k}{a\sqrt{p}}\big)^2\right]}
-\sum_{m\in\N} e^{-4\pi\big(v-\frac{m}{a\sqrt{p}}\big)^2}=\frac{k}{2}\,e^{-4\pi v^2}
+k^2\,O(p^{-\frac{1}{2}})\,.
\end{equation}
Recalling the local model \cref{wghtedlocmod}
for the equivariant Bergman kernels,
taking $v=(u+u')/2$ and $a=|\xi_x|$
in \cref{EM4bis} and after a change
of variable $m\mapsto -m$, for any $\delta>0$,
the asymptotics \cref{finalcomput1} give
$\epsilon_0>0$ such that
we get the following asymptotics as $p\to\infty$,
uniform in $x\in\mu^{-1}(0)$, in $k\in\Z$ satisfying $k\leq 0$
and in $Z,\,Z'\in T^H_xX$
as in \cref{Zperp} with $|Z|\,|Z'|<\ept$,
\begin{equation}\label{Varcomputbisbis}
\begin{split}
&p^{-2n+1}\sum_{m=-Mp}^{m=0}\,P^{(m)}_{p,x}(Z,Z')\,P^{(m+k)}_{p,x}(Z',Z)-
\left|P^{(m)}_{p,x}(Z,Z')\right|^2\\
&=\sum_{m\in\N}\,
\left(\PP^{(-m)}_x(Z,Z')\PP^{(-(m-k))}_x(Z',Z)-
|\PP^{(-m)}_x(Z,Z')|^2\right)
+O(p^{-\frac{1}{2}+\delta})\\
&=\frac{k}{|\xi_x|^2}\exp(-\pi p(u+u')^2)\exp\left(-\pi p(u-u')^2\right)\,\Big|\PP_x(\sqrt{p}Z^\perp,\sqrt{p}{Z'}^\perp)\Big|^2+k^2\,O(p^{-\frac{1}{2}+\delta})\,.
\end{split}
\end{equation}
Setting $\epsilon'>4\,\epsilon\,\sup_{x\in\mu^{-1}(0)}|\xi_x|$
as in \cref{Vartermest1},
writing $Z'=u'\,e_1+Z'^\perp\in T_x^HX$ as in \cref{Zperp},
taking Taylor expansions of $f,\,|\xi|$ and $h$
and recalling from \cref{volh} that $h|_{V_0}\equiv 1$,
\cref{theta} together with \cref{Varcomputbisbis}
imply the following asymptotics as $p\to\infty$,
\begin{equation}\label{Vartermest2}
\begin{split}
&p^{-n+1}\int_{V(\ept)}\int_V\widetilde\varrho(x)\,|\xi_x|\,|\xi_y|\\
&
\sum_{m=-Mp}^{m=0}
\left(\,
 P^{(m)}(x,y).P^{(m+k)}(y,x)-|P^{(m)}(x,y)|_p^2\right)
\hat{f}_{k}(x)\,
\overline{\hat{f}_{k}(y)}\,\pi^*dv_{B}(y)\,\pi^*dv_{B}(x)\\
&=p^{n}\int_{V_0}\varrho(\pi(x))\int_{-|\xi_x|\ept}^{|\xi_x|\epsilon\ptheta}\int_{B^{T^H_xX}(0,\epsilon'\ptheta)}
|\xi_{\psi_x(u\,e_1)}|\,|\xi_{\psi_x(Z')}|
\\
&\sum_{m=-Mp}^{m=0}\big(\,
 P^{(m)}_{p,x}(u\,e_1,Z')\,P^{(m+k)}_{p,x}(Z',u\,e_1)
-|P^{(m)}_{p,x}(u\,e_1,Z')|^2_p\big)\\
&\hat{f}_{k,x}(u\,e_1)\,
\overline{\hat{f}_{k,x}(Z')}
\,h_x(u\,e_1)\,h_x(Z')\,dZ'\,du\,\pi^*dv_{X_0}(x)+O(p^{-\infty})\\
&=k\,\int_{V_0}\left(\int_{\R}\int_{T_x^HX}
\exp(-\pi(u+u')^2)\,\exp(-\pi (u-u')^2)\,
\exp(-\pi |Z'^\perp|^2)\,dZ'\,du\right)\\
&\varrho(\pi(x))\,\hat{f}_{k}(x)\,
\overline{\hat{f}_{k}(x)}\,\pi^*dv_{X_0}(x)+
k^2\,O(p^{-\frac{1}{2}+\delta})\\
&=\frac{k}{2}\,\int_{X_0}\varrho(x)~|\hat{f}_{k}(x)|^2\,
dv_{X_0}(x)+k^2\,O(p^{-\frac{1}{2}+\delta})\,.
\end{split}
\end{equation}
Recall now the standard fact from Fourier theory of a smooth
compactly supported
function $f\in\cinf_c(X,\R)$ that for any $N\in\N$,
there exists $C_N>0$ such that for all $x\in U$ and all $k\in\Z$, the associated Fourier coefficient
\cref{Fouriercoeff}
satisfies $|\hat{f}_k(x)|<C_N\,|k|^{-N}$.
Using
\cref{asydiag,Pjprop}, we then get a constant $C>0$ such
that for all $N\geq 2$, $x,\,y\in U$
and $p\in\N$, through the canonical isomorphism
$L^p_x\otimes (L^p_x)^*\simeq\C$, we have
\begin{multline}\label{highFouriermode}
p^{-n+1}\sum_{|k|\geq p^{\frac{1-\epsilon}{8}}}
\sum_{m=-Mp}^{m=0}\left|\left(\,
 P^{(m)}(x,y)\,P^{(m+k)}(y,x)-|P^{(m)}(x,y)|_p^2\right)\,\hat{f}_{k}(x)\,
\overline{\hat{f}_{k}(y)}\right|\\
\leq C\,C_N\,p^{2n+1}\sum_{|k|\geq p^{\frac{1-\epsilon}{8}}}\,|k|^{-2N}\leq C\,C_N\,p^{2n+1-(2N-1)\frac{1-\epsilon}{8}}\,.
\end{multline}
Taking $N\in\N$ large enough, we then get that
\cref{highFouriermode} tends to $0$ as $p\to\infty$,
while on the other hand, we get from \cref{Vartermest2}
the following asymptotics as $p\to\infty$,
\begin{equation}\label{Varestfinal1}
\begin{split}
&p^{-n+1}\sum_{k\geq -p^{\frac{1-\epsilon}{8}}}^{k=0}
\,\int_{V(\ept)}\int_{V}\widetilde\varrho(x)\,|\xi_x|\,|\xi_y|
\,\hat{f}_{k}(x)\,
\overline{\hat{f}_{k}(y)}\\
&\sum_{m=-Mp}^{m=0}\left(
 P^{(m)}(x,y).P^{(m+k)}(y,x)-|P^{(m)}(x,y)|_p^2\right)\,\pi^*dv_{B}(x)\,\pi^*dv_{B}(y)\\
&=\frac{1}{2}\sum_{k\geq -p^{\frac{1-\epsilon}{8}}}^{k=0}\,k\,\int_{X_0}\varrho(x)~|\hat{f}_{k}(x)|^2
dv_{X_0}(x)+\left(\sum_{k\geq -p^{\frac{1-\epsilon}{8}}}^{k=0}
k^2\right)\,O(p^{-\frac{1}{2}+\delta})\\
&=\frac{1}{2}\sum_{k\leq 0}\,k\,\int_{X_0}\varrho(x)~|\hat{f}_{k}(x)|^2
dv_{X_0}(x)+O(p^{-\frac{1}{8}+\frac{3}{8}\epsilon+\delta})\,.
\end{split}
\end{equation}
The same reasonning applied to the last term of formula \cref{Fourierdev2}
gives in turn
the following asymptotic expansion as $p\to+\infty$,
\begin{equation}\label{Varestfinal2}
\begin{split}
&\sum_{k\geq -p^{\frac{1-\epsilon}{8}}}^{k=-1}p^{-n+1}\int_{V(\ept)}
\int_{V(\ept)}\int_{V}\widetilde\varrho(x)\,|\xi_x|\,|\xi_y|
\,\hat{f}_{-k}(x)\,
\overline{\hat{f}_{-k}(y)}\\
&
\sum_{m=-Mp}^{m=0}\left(\,
 P^{(m+k)}(x,y)\,P^{(m)}(y,x)-|P^{(m)}(x,y)|_p^2\right)\,\pi^*dv_{B}(x)\,\pi^*dv_{B}(y)\\
&=\frac{1}{2}\sum_{k< 0}\,k\,\int_{X_0}\varrho(x)~|\hat{f}_{-k}(x)|^2\,dv_{X_0}(x)+O(p^{-\frac{1}{8}+\frac{3}{8}\epsilon+\delta})\,.
\end{split}
\end{equation}
Since $\mu^{-1}(0)\subset X$ is compact by properness of
$\mu\in\cinf(X,\R)$, we can pick a finite cover of $X_0:=\mu^{-1}(0)$
by open sets $\pi(V_0)\subset X_0$ with $V_0\subset\mu^{-1}(0)$
of the form \cref{BNdef} and consider an adapted partition of unity.
Then choosing $\delta>0$ and $\epsilon>0$ small enough
and summing the estimates
\cref{Vartermest1,highFouriermode,Varestfinal1,Varestfinal2}
with $\varrho\in\cinf(X_0,\R)$ running over this partition of unity,
this identifies the second term of the right-hand side of
the asymptotics
\cref{Vardec} with the second term of the right-hand side of
the asymptotics \cref{Varfla}, concluding the proof
of the asymptotics \cref{Varfla} for the variance.

The convergence of the random variable
$N_p^{\alpha}(\NN_p[f]-\IE[\NN_p[f]])$
with $\alpha=\frac{1}{2n}-\frac{1}{2}$
to a centered normal random variable is a
consequence of
the asymptotics \cref{Varfla} for the variance and of
the asymptotics \cref{Expfla} for the expectation established in
\cref{LLN}, thanks to a general argument
adapted from the result of Soshnikov in \cite[Th.\,1]{Sos02}
by Berman in \cite[\S\,6.5]{Ber18}.
This concludes the proof of the Theorem.




\end{proof}



\providecommand{\bysame}{\leavevmode\hbox to3em{\hrulefill}\thinspace}
\providecommand{\MR}{\relax\ifhmode\unskip\space\fi MR }
\providecommand{\MRhref}[2]{%
  \href{http://www.ams.org/mathscinet-getitem?mr=#1}{#2}
}
\providecommand{\href}[2]{#2}

\Addresses

\end{document}